\def\blfootnote{\xdef\@thefnmark{}\@footnotetext}
\newcommand\ccnote{
    \blfootnote{\copyright\,\, Max Engelstein and Dana Mendelson}
    \blfootnote{\ccLogo\, \ccAttribution\,\, Licensed under a \href{https://creativecommons.org/licenses/by/4.0/}{Creative Commons Attribution License (CC-BY)}.}
}
\numberwithin{equation}{section}
\renewcommand{\leq}{\leqslant}
\renewcommand{\geq}{\geqslant}
\renewcommand{\mathbb}{\varmathbb}
\newtheorem{theorem}{Theorem}[section]
\newtheorem{lemma}[theorem]{Lemma}
\newtheorem{corollary}[theorem]{Corollary}
\newtheorem{proposition}[theorem]{Proposition}
\newtheorem{definition}[theorem]{Definition}
\newtheorem{remark}[theorem]{Remark}
\newcommand{\bR}{{\mathbb R}}
\newcommand{\bZ}{{\mathbb Z}}
\newcommand{\bS}{{\mathbb S}}
\newcommand{\bT}{{\mathbb T}}
\newcommand{\cN}{{\mathcal{N}}}
\newtheorem*{ntheorem}{Theorem}
\address{Max Engelstein, University of Minnesota-Twin Cities, Department of Mathematics}
\email{mengelst@umn.edu}
\address{Dana Mendelson, University of Chicago, Department of Mathematics} 
\email{d.s.mendelson@gmail.com}
\begin{document}

\thispagestyle{empty}

\begin{minipage}{0.28\textwidth}
\begin{figure}[H]
%\centering
\includegraphics[width=2.5cm,height=2.5cm,left]{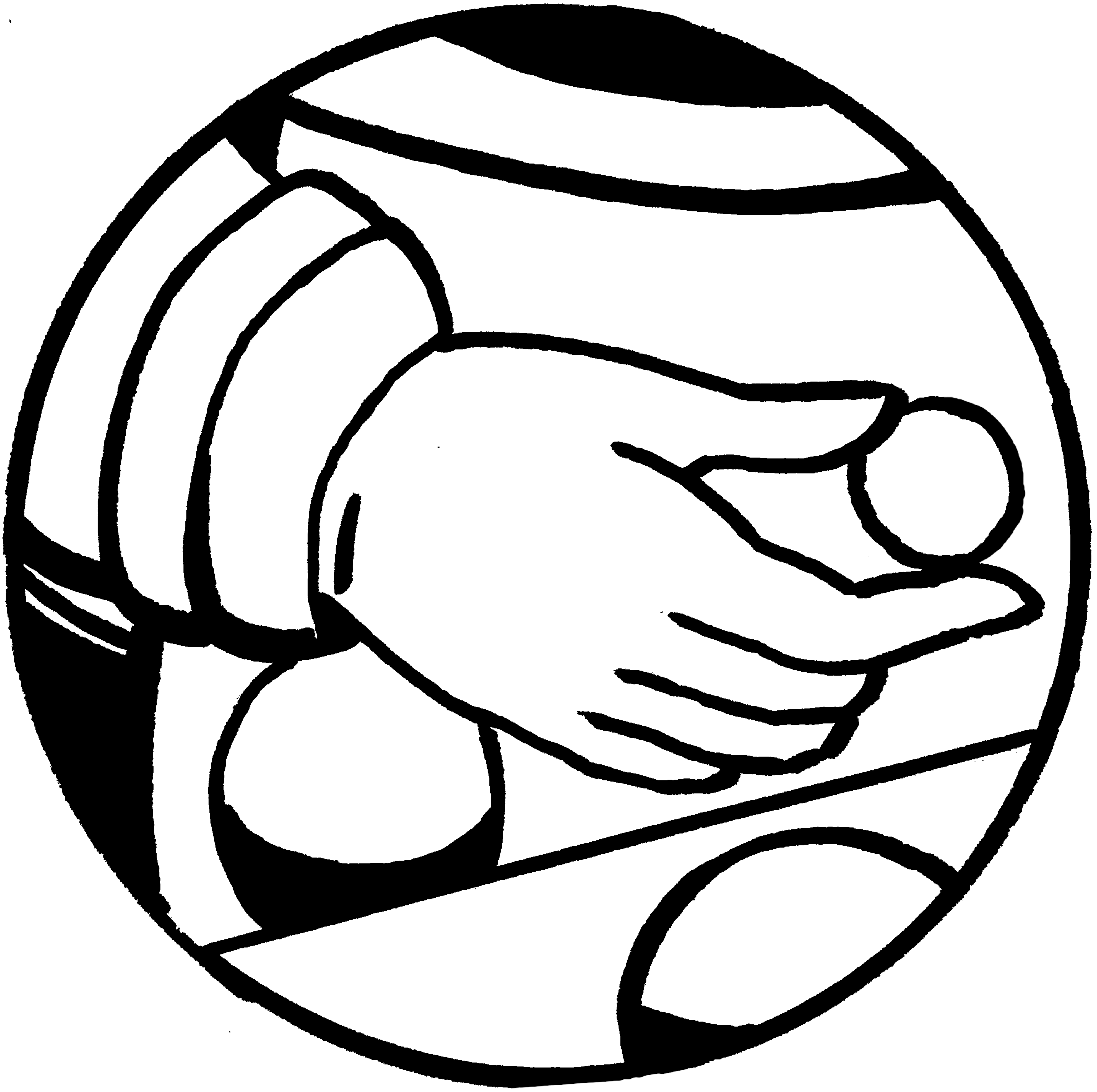}
\end{figure}
\end{minipage}
\begin{minipage}{0.7\textwidth} 
\begin{flushright}
%% The following metadata, in particular
%% the Paper No. and the DOI will be inserted by the journal
Ars Inveniendi Analytica (2022), Paper No. 5, 30 pp.
\\
DOI 10.15781/kz11-np83
\\
ISSN: 2769-8505
\end{flushright}
\end{minipage}

\ccnote

\vspace{1cm}

%%      -------------------------------------------------------------------------------
%%      -------------------------- TITLE ----------------------------
%%      -------------------------------------------------------------------------------
%% Authors, please put here the full title of the article

\begin{center}
\begin{huge}
\textit{Non-uniqueness of bubbling for wave maps}
\end{huge}
\end{center}

\vspace{1cm}

%%      -------------------------------------------------------------------------------
%%      -------------------------- AUTHORS AND AFFILIATIONS ----------------------------
%%      -------------------------------------------------------------------------------
%% Authors, please put here your full names and affiliations

\begin{minipage}[t]{.28\textwidth}
\begin{center}
{\large{\bf{Max Engelstein}}} \\
\vskip0.15cm
\footnotesize{University of Minnesota}
\end{center}
\end{minipage}
\hfill
\noindent
\begin{minipage}[t]{.28\textwidth}
\begin{center}
{\large{\bf{Dana Mendelson}}} \\
\vskip0.15cm
\footnotesize{University of Chicago}
\end{center}
\end{minipage}
\hfill

\vspace{1cm}

%%% Please replace "James Mustard" below 
%%% with the name of the managing editor for your submission.
%%% If you are unsure about their identity
%%% please ask an editor-in-chief about.

\begin{center}
\noindent \em{Communicated by Frank Merle}
\end{center}
\vspace{1cm}

%%      -------------------------------------------------------------------------------
%%      -------------------------- BEGIN ABSTRACT ----------------------------
%%      -------------------------------------------------------------------------------
%% Authors, please put here the ABSTRACT and KEYBOARDS

\noindent \textbf{Abstract.} \textit{We consider wave maps from $\mathbb R^{2+1}$ to a $C^\infty$-smooth Riemannian manifold, $\mathcal N$. Such maps can exhibit energy concentration, and at points of concentration, it is known that the map (suitably rescaled and translated) converges weakly to a harmonic map, known as a bubble. We give an example of a wave map which exhibits a type of non-uniqueness of bubbling. In particular, we exhibit a continuum of different bubbles at the origin, each of which arise as the weak limit along a different sequence of times approaching the blow-up time.} 

\textit{This is the first known example of non-uniqueness of bubbling for dispersive equations. Our construction is inspired by the work of Peter Topping \cite{toppingwinding}, who demonstrated a similar phenomena can occur in the setting of harmonic map heat flow, and our mechanism of non-uniqueness is the same `winding' behavior exhibited in that work.}
\vskip0.3cm

\noindent \textbf{Keywords.} Soliton resolution conjecture, wave maps, non-uniqueness
\vspace{0.5cm}

%%      -------------------------------------------------------------------------------
%%      -------------------------- BEGIN ARTICLE ----------------------------
%%      -------------------------------------------------------------------------------
%% Authors, copy the body of your paper here

\section{Introduction}\label{s:intro}
We consider wave maps from $(1+2)$-dimensional Minkowski space into a compact Riemannian manifold $(\cN, g)$, which are defined formally as critical points of the Lagrangian
\[
\mathcal{L}(U, \partial U) = \frac{1}{2} \int_{\bR^{1 + 2}} \eta^{\alpha \beta}\, \langle \partial_\alpha U , \partial_\beta U \rangle_g dt dx,
\]
where $\eta$ is the Minkowski metric and g is the Riemannian manifold on $\cN$.  In local coordinates on $\cN$, wave maps are solutions $u$ to the system
\begin{equation}\label{eq:wm}
 \left\{ \begin{aligned}
&\Box u^i = \Gamma_{k\ell}^i(u) \nabla u^k \cdot \nabla u^\ell, \\
&(u, \partial_t u ) \big|_{t=0} = (u_0, u_1),
\end{aligned} \right.
\end{equation}
where $\Gamma_{k\ell}^i$ are the Christoffel symbols for $\cN$.

There is a conserved energy associated to \eqref{eq:wm} given by
\begin{equation}\label{eq:energy}
\mathcal{E}(u, \partial_t u )(t) = \int_{\mathbb{R}^2} |\nabla  u |_g^2+|\partial_t u |_g^2 \equiv \mathcal{E}(u_0, u_1 ),
\end{equation}
whenever the right-hand side is finite. The coercivity of this energy implies that that the $\dot{H}^1$-norm of solutions remains bounded for all time. However, it is still possible for energy to concentrate producing ``solitons" or ``bubbles" in the suitably rescaled weak limit. Bubbling is a well-studied phenomena in nonlinear evolution equations, particularly in the parabolic setting, dating back to the work of Struwe \cite{Struwe85}, which built on the work of Sacks and Uhlenbeck \cite{SacksUhlenbeck}. We will not attempt to do justice to the vast literature here, particularly pertaining to parabolic flows, but we refer to \cite{delpino} and references therein for an overview on the history.  The purpose of this paper is to provide the first example, in the context of \eqref{eq:wm} {\it where this weak limit is non-unique},  see Theorem~\ref{t:main} below. In particular, we construct a solution where, after rescaling around a particular point in space-time,  different solitons are obtained by considering weak limits along different sequences of times. 

\medskip
To more precisely state our results, let us recall some background on wave maps.  We consider \eqref{eq:wm} for smooth, finite energy initial data belonging to a certain symmetry class and satisfying certain quantitative higher regularity bounds; we will make these assumptions precise below.  For such initial data, classical energy methods show that \eqref{eq:wm} admits a unique smooth solution for small times, see e.g. \cite{shatahstruwe}.  When the domain is $\mathbb R^{2+1}$, the energy \eqref{eq:energy} is scale invariant, thus solutions to \eqref{eq:wm} can exhibit energy concentration even if the initial data is smooth or highly symmetric, see, e.g. \cite{KST08, RR12, rodster}. This concentration can be ruled in certain cases under additional assumptions, say on the smallness of the initial energy, see \cite{taosmalldata, tatarurough, kriegerhyperbolic}, or by restricting the topology of the target, see, e.g. \cite{struweequivariant, kriegerschlagbook, taoarxiv}. 

\medskip
For general targets, Sterbenz and Tataru \cite{sterbenztataruwavemaps1} showed that inside any light cone a dichotomy holds: either energy does not concentrate too much, in which case solution can be smoothly extended to the whole space with quantitative control, or the energy concentrates and the solution ``bubbles off" a harmonic map. To state their result (in the infinite time setting), we follow the notation in \cite{sterbenztataruwavemaps1}, and fix
\[
C_{[t_0, t_1]} = \{t_0 \leq t \leq t_1, r \leq t\}
\]
for the truncated light cone and
\[
S_{t} = \{(x,t)\mid |x| \leq t \}
\]
for the time sections of the light cone. We set
\[
\mathcal{E}_{S_t} = \frac{1}{2} \int_{S_t}  |\nabla u |_g^2 + |\partial_t u|_g^2.
\]

\begin{theorem}[\protect{\cite[Theorem 1.5]{sterbenztataruwavemaps1}}] \label{thm:st_dich}
Let $u: C_{[1,\infty)} \to \mathcal{N}$ be a $C^\infty$ wave map such that
\[
\lim_{t \to \infty} \mathcal{E}_{S_t}[u] < \infty.
\]
 Then exactly one of the following possibilities must hold:
 \begin{itemize}
\item[(a)] There exists a sequence of points $(t_n, x_n) \in C_{[1,\infty)}$ and scales $\lambda_n$ with
\[
t_n \to \infty, \quad \limsup_{n \to \infty} \frac{|x_n|}{t_n} < 1, \quad \lim_{n \to \infty} \frac{\lambda_n}{t_n} = 0.
\]
so that the rescaled sequence of maps
\[
u^{(n)}(t,x) = u(t_n + \lambda_n t, x_n + \lambda_n x)
\]
converges strongly in $H^1_{loc}$ to a Lorentz transform of an entire (time-independent) harmonic map  
\[
u^{(\infty)} : \mathbb{R}^2 \to \mathcal{N}
\]
 of nontrivial energy:
\[
u^{(\infty)} : \mathbb{R}^2 \to \mathcal{N}, \quad 0 < \|u^{(\infty)} \|_{\dot H^1(\bR^2)} \leq \lim_{t \to \infty}  \mathcal{E}_{S_t}[u].
\]
\item[(b)] For each $\varepsilon > 0$, there exists $t_0 >  1$ and a wave map extension
\[
u : \mathbb{R}^2 \times [t_0, \infty) \to \mathcal{N}
\]
with bounded energy:
\[
\mathcal{E}(u) \leq (1 + \varepsilon^8) \lim_{t \to \infty}  \mathcal{E}_{S_t}[u],
\]
which satisfies
\[
\sup_{t \in [t_0, \infty)} \sup_{k \in \mathbb{Z}} ( \|P_k u(t) \|_{L^\infty_x} + 2^{-k} \|P_k \partial_t u(t) \|_{L^\infty_x}) \leq \varepsilon,
\]
where $P_k$ are the Littlewood-Paley projections to frequency $2^k$.
 \end{itemize}
An analogous dichotomy holds for finite time blowup, see \cite[Theorem 1.3]{sterbenztataruwavemaps1}. 
\end{theorem}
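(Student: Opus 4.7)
I would structure the proof as a dichotomy based on whether the map concentrates energy at a small scale inside the cone. The core a priori estimate is a Morawetz-type monotonicity for the null energy flux through the lateral boundary of $C_{[t_0, t_1]}$, which, combined with the existence of $\lim_{t \to \infty} \mathcal{E}_{S_t}[u]$, forces this flux to vanish as $t_0 \to \infty$. This null-flux decay is the engine behind both alternatives: in (a) it pins down any blow-up profile as time-independent in a suitable frame, and in (b) it furnishes the smallness needed to feed into a global extension theory.

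\textbf{Case (a) --- concentration.} Assume that for some $\varepsilon_1 > 0$ one can find $(t_n, x_n, r_n)$ as described so that a definite fraction $\varepsilon_1$ of energy sits inside $B(x_n, r_n)$ at time $t_n$. Rescaling $u^{(n)}(t,x) = u(t_n + r_n t, x_n + r_n x)$ gives a sequence bounded in $H^1_{loc}$, with null flux still vanishing after rescaling. Weak $H^1_{loc}$-compactness yields a limit $u^{(\infty)}$, and vanishing of the null flux kills the null derivative of $u^{(\infty)}$ in the direction determined by $\lim_n x_n/t_n$, so after a Lorentz boost $u^{(\infty)}$ is time-independent. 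Passing to the limit in \eqref{eq:wm} then produces the harmonic map equation. To upgrade weak to strong $H^1_{loc}$ convergence and guarantee that $u^{(\infty)}$ is nontrivial, I would select $r_n$ by pigeonholing over dyadic scales as the smallest radius catching a definite chunk of energy, eliminating the possibility of a defect measure at that scale.

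\textbf{Case (b) --- no concentration.} Otherwise, the failure of every concentration criterion translates, via finite speed of propagation and Littlewood--Paley decomposition, into the frequency-localized $L^\infty_x$ smallness claimed in the statement, uniformly in $t$. Combined with the small null flux, this puts the initial data (read off at a sufficiently large $t_0$) in the regime to which the small-data / small-flux wave map theory of \cite{taosmalldata, tatarurough, kriegerhyperbolic} and the companion work of Sterbenz--Tataru applies. That machinery extends the map globally to $\mathbb{R}^2 \times [t_0, \infty)$ and produces an energy estimate whose discrepancy with the cone energy is absorbed in the $\varepsilon^8$ buffer.

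\textbf{Main obstacle.} The substantive difficulty lies in case (a). Unlike in parabolic or elliptic problems, no smoothing mechanism is available to bootstrap weak to strong $H^1$ convergence of $u^{(n)}$: the strong limit, and in particular the nonlinear harmonic map equation for $u^{(\infty)}$, must be extracted from the null-flux monotonicity alone. Making this work rigorously requires a careful pigeonholing to choose $r_n$ at which concentration is not dissipated into smaller scales, a compensated compactness argument to pass weakly convergent sequences through the quadratic nonlinearity $\Gamma(u) \nabla u \cdot \nabla u$, and a quantitative finite-speed-of-propagation argument to prevent $(t_n, x_n)$ from escaping to the boundary of the cone (which is what the assumption $\limsup |x_n|/t_n < 1$ encodes). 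The subtle interplay of scales, frequencies, and boosts is what makes the actual proof in \cite{sterbenztataruwavemaps1} technically heavy.
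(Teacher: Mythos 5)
The paper you are reading does not prove this theorem; it is imported verbatim from Sterbenz--Tataru \cite{sterbenztataruwavemaps1}, so there is no in-paper proof against which to compare your sketch. Judged against the actual argument in \cite{sterbenztataruwavemaps1}, your outline correctly isolates two genuine ingredients: the monotonicity and consequent decay of the null energy flux through the lateral boundary of the cone, and the Lorentz-boost normalization that renders any blow-up profile time-independent. Both of these do appear in Sterbenz--Tataru's proof.

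However, there is a genuine gap at the heart of your case (a): ``compensated compactness'' on the raw map does not suffice to pass the quadratic null-form nonlinearity $\Gamma(u)\nabla u\cdot\nabla u$ to the weak limit, nor to upgrade weak $H^1_{\mathrm{loc}}$ convergence to strong convergence, and the entire structure of Sterbenz--Tataru's proof is organized around avoiding exactly this difficulty. Their analysis is carried out in the caloric gauge (harmonic map heat flow gauge) with frequency-envelope bookkeeping; the compactness, the extraction of the limiting harmonic map, and the elimination of a defect measure all happen at the level of the gauge-transformed quantities, not the map itself. Your sketch never introduces any gauge, and without one the passage to the limit in your case (a) fails as written. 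A related gap is in making the dichotomy exhaustive: you describe cases (a) and (b) as two parallel scenarios, but in the original proof the argument runs by assuming the frequency-localized smallness of (b) fails on a sequence of times and then \emph{constructing} the concentration point and scale from that failure; this construction requires quantitative propagation estimates well beyond naive finite speed of propagation. These are not details to be filled in later --- they are the reason the proof occupies a two-paper series.
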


\begin{remark}
In the previous theorem and throughout, we use $\dot H^s(\bR^n)$ to denote the usual homogeneous Sobolev space, with norm defined as
\[
\|u\|_{\dot H^s(\bR^n)} = \|(-\Delta)^{s/2} u\|_{L^2(\bR^n)}.
\]
\end{remark}

\begin{remark}\label{r:scattering}
It was observed in \cite{sterbenztataruwavemaps1} that part (b) in Theorem \ref{thm:st_dich} implies that a certain controlling norm for $u$ is finite, and in \cite{sterbenztataruwavemaps2} it was proved that this implies that $u$ converges to a linear wave after applying a suitable gauge transformation. We note that this is called scattering in the terminology of \cite{sterbenztataruwavemaps1}, but is different from the usual use of that term in nonlinear dispersive equations, see \cite[Remark 1]{lawire_oh_cmp} for some discussion. We adopt this terminology for brevity when referring to the behavior in part (b) of this theorem.

Instead of working with the modified definition of scattering in \cite{sterbenztataruwavemaps2}, we instead adopt a more direct approach to demonstrate that our flow exhibits energy concentration via degree considerations. We believe this approach illuminates certain features of scattering in the current setting. This argument is inspired by arguments in \cite{CTZ93, StruweLecNotes, ckls2,  lawire_oh_cmp}, see Section \ref{s:flow} for more details. 
\end{remark}

The first case in Theorem \ref{thm:st_dich} describes ``bubbling'' (a.k.a.~energy concentration) to non-constant harmonic maps. Note that the bubbling phenomenon described in Theorem \ref{thm:st_dich} leaves open the question of whether the convergence along a discrete sequence of times and scales can be strengthened to convergence as $t\rightarrow \infty$ after rescaling, translating and applying Lorentz transformations by some continuous functions, $\lambda(t)$, $x(t)$ and $\gamma(t)$.

To reduce the number of parameters, and to provide an example of non-uniqueness in what we speculate is the simplest possible setting, we will impose an additional symmetry on our solution, which will allow us to ignore translations and Lorentz transformations: 

\begin{definition}\label{d:quasiequivariant}
We say that a wave map $u(x,t): \bR^{1+2} \rightarrow \mathcal N$ is quasi-equivariant if there exists a smooth one-parameter family of isometries, $\Phi_s \in \mathrm{Isom}(\mathcal N)$. with $\Phi_0 = \mathrm{Id}_{\mathcal N}$, such that (using polar coordinates) one has $u(r, \theta + s, t) = \Phi_s \circ u(r, \theta, t)$ for all $r > 0, s, \theta \in \mathbb S^1$ and $t\in [0, T_{\max})$. 
\end{definition}

\begin{remark}\label{r:quasiequiv}
Many of Topping's constructions for harmonic map heat flow (see, e.g. \cite{toppingwinding, toppingrigidity}) exhibit quasi-equivariant symmetries. While, Topping does not give this symmetry a name, we introduce the term ``quasi-equivariance" to stress its relation to the well-studied notion of equivariant wave maps. In the language of Definition \ref{d:quasiequivariant}, if $\mathcal N$ is a surface of rotation and $\Phi_s$ is a rotation of $\mathcal N$ by $s$ radians around the same axis, then $u$ is equivariant.
\end{remark}

We can now define uniqueness of bubbling for quasi-equivariant wave maps:

\begin{definition}\label{d:solitonresolution}
We say that a quasi-equivariant wave map, $u:\mathbb R^{1+2}\rightarrow \mathcal N$, has a unique bubble at a point $0\in \mathbb R^2$ and time $T_{max} > 0$ if there exists a continuous function, $\lambda(t): [0, T_{\max})\rightarrow (0, + \infty)$, with 
\begin{equation}
\lambda(t) = \begin{cases} o(T_{max} - t) & T_{max} < \infty\\
 o(t) &T_{max} =\infty \end{cases}
\end{equation}
such that $u(t, \lambda(t)x) \rightarrow \omega(x)$ in $C_{\mathrm{loc}}^0(\mathbb R^2\backslash \{0\}; \mathcal N)$ for some non-trivial harmonic map $\omega: \mathbb R^{2}\rightarrow \mathcal N$. 
\end{definition}

In contrast to part $(a)$ of Sterbenz-Tataru's Theorem \ref{thm:st_dich}, which yields convergence to a harmonic map along a sequence of times, the previous definition describes convergence (after application of the relevant symmetries in the quasi-equivariant class) to a unique harmonic map along \emph{all times}. In general, it is difficult to prove that uniqueness in the sense of Definition \ref{d:solitonresolution} fails. However, our Theorem \ref{t:nonuniqueness} shows that if the convergence is ``winding" (see Definition \ref{d:winding}), then the bubble is not unique. 

Using this approach, our main theorem gives the first example of a bubbling solution to a non-linear wave equation for which uniqueness in the sense of Definition \ref{d:solitonresolution} is known not to hold. 

\begin{theorem}[Main Theorem]\label{t:main}
There exists a compact smooth Riemannian manifold $(\mathcal{N}, g)$ given by
\[
\mathcal{N} = \mathbb{T}^2 \times_f \mathbb{S}^2
\]
for a certain $C^\infty(\bT^2)$ warping function, $f$\footnote{Recall that if $(M, g_1), (N,g_2)$ are Riemannian manifolds and $f: M\rightarrow \mathbb R$, is positive, then the warped product manifold, $M\times_f N$, is a Riemannian manifold consisting of the product space $M\times N$ endowed with the Riemannian metric $g := g_1 + f^2g_2$.}, and $C^\infty$-smooth, finite energy, quasi-equivariant initial data $(u_0, u_1)$, which satisfy
\[
\|(u_0, u_1) \|_{\dot H^3 \times \dot H^2} < \infty, \qquad \mathcal E(u_0, u_1) < \mathcal E_{\mathrm{quasi}}(\mathcal N) + \varepsilon_1,
\]
such that the corresponding solution $(u, u_t)$ to \eqref{eq:wm} develops a bubble as $t \to T_{\max}$ which fails to be unique in the sense of Definition \ref{d:solitonresolution}. Above, $\varepsilon_1 >0$ is a constant which depends only on $\mathcal N$, and $\mathcal E_{\mathrm{quasi}}(\mathcal N)$ denotes the smallest energy of a non-trivial quasi-equivariant harmonic map, $\omega: \mathbb S^2 \rightarrow \mathcal N$.
\end{theorem}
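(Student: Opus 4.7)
The overall strategy is to mirror Peter Topping's \cite{toppingwinding} harmonic-map-heat-flow construction in the wave maps setting, exploiting quasi-equivariance to reduce the problem to an effectively low-dimensional system with a single slow mode that is forced to wind rather than settle. Concretely I would take $\mathcal{N} = \mathbb{T}^2 \times_f \mathcal{S}^2$ with metric $g_{\mathbb{T}^2} + f(\alpha)^2 g_{\mathcal{S}^2}$ and let $\Phi_s$ be the one-parameter group rotating the $\mathcal{S}^2$-factor about its poles; these are isometries of $\mathcal{N}$ regardless of $f$. A smooth quasi-equivariant wave map then has the form
\[
u(t,r,\theta) = \bigl(\alpha(t,r),\, \phi(t,r),\, \theta + \beta(t,r)\bigr) \in \mathbb{T}^2 \times \mathcal{S}^2,
\]
which reduces \eqref{eq:wm} to a coupled system for $(\alpha,\phi,\beta)$ on $(0,\infty)_r \times [0,T_{\max})_t$. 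The only harmonic bubbles compatible with this ansatz are, up to scale, the degree-one maps into a slice $\{\alpha_0\} \times \mathcal{S}^2$ for critical points $\alpha_0$ of $f$, carrying energy $8\pi f(\alpha_0)^2$; different $\alpha_0$ give metrically distinct bubbles, which is the source of non-uniqueness.

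Next I would construct $(u_0,u_1)$ in the quasi-equivariant class at energy just above $\mathcal{E}_{\mathrm{quasi}}(\mathcal{N}) = 8\pi \min_{\mathbb{T}^2} f^2$ with two key features. First, the restriction of $u_0$ to small circles $\{|x|=r\}$ should carry nontrivial homotopical degree in the $\mathcal{S}^2$-factor; together with the energy near-threshold smallness $\varepsilon_1$, this blocks the scattering alternative in Theorem \ref{thm:st_dich}(b) via the degree argument sketched in Remark \ref{r:scattering} (the Littlewood--Paley smallness in (b), combined with an appropriate Sobolev embedding, forces null-homotopy on small circles, contradicting the nontrivial degree). Second, the initial profile $\alpha(0,\cdot)$ and its time derivative are arranged so that the slow ``core'' value $\alpha(t,0)$ is pushed along a prescribed nontrivial closed curve $\gamma \subset \mathbb{T}^2$ lying in the critical set of $f$ as $t \uparrow T_{\max}$. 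Ruling out case (b) forces case (a) of Theorem \ref{thm:st_dich} and so guarantees that at least one bubble forms.

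For non-uniqueness itself, I would choose $f$, following \cite{toppingwinding}, so that its critical set contains a closed curve $\gamma$ along which the effective reduced dynamics drives $\alpha(t,0)$ with a persistent rotational velocity component. Setting up a modulation ansatz $u(t,\cdot) \approx \omega_{\alpha(t,0),\, \lambda(t)}$ on the bubble core, one extracts approximate ODEs for $\alpha(t,0)$ and the scale $\lambda(t)$; the winding structure of $f$ forces $\alpha(t,0)$ to traverse $\gamma$ infinitely often. Then for every $\alpha_\ast \in \gamma$ there is a sequence $t_n \uparrow T_{\max}$ along which $\alpha(t_n,0) \to \alpha_\ast$, and combining with the extraction of Theorem \ref{thm:st_dich}(a) gives strong $H^1_{\mathrm{loc}}$ convergence (upgraded to $C^0_{\mathrm{loc}}$ via the higher regularity assumption) to the bubble $\omega_{\alpha_\ast}$. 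Since distinct $\alpha_\ast$ give inequivalent harmonic maps, Definition \ref{d:solitonresolution} fails.

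The principal obstacle is transferring Topping's parabolic argument to the dispersive setting. In harmonic-map heat flow the smoothing of the semigroup cleanly slaves the ``fast'' degrees of freedom to the slow mode $\alpha(t,0)$, whereas for wave maps the radiation generated by slow modulation is merely dispersed, not damped, and can reinteract repeatedly with the bubble core. Controlling this radiation on the full interval $[0,T_{\max})$, long enough for $\alpha(t,0)$ to complete its winding, is the delicate point, and will presumably require both the higher-regularity hypothesis $(u_0,u_1)\in \dot{H}^3 \times \dot{H}^2$ and the near-threshold energy gap, combined with a modulation-plus-dispersive-estimate bootstrap rather than Topping's pointwise monotonicity arguments.
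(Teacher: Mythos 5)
Your overall strategy (Topping's construction, quasi-equivariance, degree argument to rule out scattering) points in the right general direction, but the construction you describe does not produce the winding mechanism, and the modulation-plus-bootstrap approach you sketch at the end is exactly the step that would fail. Concretely: you take $f$ depending on a single torus coordinate and claim the bubbles are degree-one maps into slices $\{\alpha_0\}\times\bS^2$ with energy $8\pi f(\alpha_0)^2$ for varying critical points $\alpha_0$. But if different time sequences yielded bubbles of different energies, energy conservation would be violated. In the paper $f$ is a function on $\bT^2$ depending nondegenerately on both coordinates, built so that the minimum set $\{w=0\}$ (where $f\equiv 1$) is a circle, and there is a geodesic $\gamma:(-\infty,\infty)\to\bT^2$, disjoint from $\{w=0\}$, which winds around the torus infinitely many times and accumulates on all of $\{w=0\}$; moreover $\nabla f|_\gamma$ is tangent to $\gamma$. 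Lemma \ref{lem:bubbling_loc} then shows that every low-energy bubble has constant $\bT^2$-component lying in $\{w=0\}$ and energy exactly $\mathcal{E}_{\bS^2}$; the non-uniqueness is in the constant, not the energy.

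The more serious gap is that you correctly flag the radiation-reinteraction difficulty in any modulation argument for wave maps, but offer no way around it --- and the paper does not overcome it, it avoids it. Instead of tracking a core parameter $\alpha(t,0)$ by an ODE reduction, the paper uses that $\gamma\times\bS^2$ is totally geodesic in $\mathcal{N}$, together with the $\dot H^3\times\dot H^2$ persistence-of-regularity bound, to show in Lemma \ref{l:imageofflow} that the torus component of the flow remains on $\gamma$ (and at finite parameter value) for all $t<T_{\max}$. Since the bubble's torus component is in $\{w=0\}=\overline{\gamma}\setminus\gamma$, while the flow lives on $\gamma$ whose lift $\hat\gamma$ to the universal cover is unbounded, Proposition \ref{p:winding} shows the singularity is strongly winding, and Theorem \ref{t:nonuniqueness} gives non-uniqueness by a soft compactness argument: a unique continuous-time bubble would force the lifted flow to stay bounded, contradiction. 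No quantitative ODE reduction or dispersive estimates for radiation are needed. You should also note that ruling out scattering uses the flux and interior-energy estimates of Section \ref{s:flow} (following \cite{ckls2, CTZ93}), not merely the Littlewood--Paley smallness in case (b) of Theorem \ref{thm:st_dich}.
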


The energy assumption above is reminiscent of work which considers wave maps with energies slightly above the ``ground state" (that is, the lowest energy of a non-trivial harmonic map into the target manifold). We do not show that $\mathcal E_{\mathrm{quasi}}(\mathcal N)$ is the energy of the ground state, but it plays that role within our considered symmetry class and we will sometimes abuse terminology and refer to it as the energy of the ground state (see Lemma \ref{lem:bubbling_loc} and the remark after). 

\begin{remark}\label{r:remarksonmain}
A few clarifying remarks on Theorem \ref{t:main}:
\begin{itemize}
\item  Bubbling for radially symmetric wave maps into general targets has been ruled out under a number of different weak assumptions \cite{Struweradial, Nahas, CKL18}. Thus it is perhaps unreasonable to expect that Theorem \ref{t:main} could hold under stronger symmetry assumptions on the initial data. 
\item The fact that $f\in C^\infty$, as opposed to analytic, allows $f(p)-f(q)$ to vanish to arbitrarily high order as $p\rightarrow q$. Much previous work on wave maps has assumed non-degeneracy conditions (e.g. conditions (A1)-(A3) in Section 3 of \cite{jiakenig}) which rule out this behavior. 
\item In addition to $C^\infty$ smoothness and finite energy, we require some additional quantitative regularity of the initial data. While $\dot H^{1+ \varepsilon} \times \dot H^{\varepsilon}$ regularity (for any $\varepsilon > 0$) would suffice, we assume $\dot H^3 \times \dot H^2$-regularity of the initial data since the proof of local wellposedness proceeds by classical energy methods and is simpler than the techniques required to get the almost critical result. This additional regularity is used in Lemma \ref{l:imageofflow}.
\item The proof of Theorem \ref{t:main} actually establishes the existence of an infinite family of quasi-equivariant initial data $(u_0, u_1)$ for which the corresponding solution has non-unique bubbling, as opposed to a specific construction of the initial data. 
\end{itemize}
\end{remark}

\begin{remark}\label{rem:rate_discussion}
We do not, currently have more precise control on the rates $\{r_n\}$ for the bubbling in Theorem \ref{t:main} than what is provided in \cite{sterbenztataruwavemaps1}. In the case of equivariant wave maps into $\mathbb{S}^2$, it is remarked in \cite{ckls2} that various possibilities exist, and we refer to \cite{jendrej17, Pillai} for the construction of an infinite-time blow-up in this setting. In \cite{toppingwinding}, Topping establishes a lower bound on the rate of blow-up in the context of harmonic map heat flow. A key ingredient in Topping's proof of this lower bound is a quantitative ``neck estimate'', see \cite[Lemma 4.4]{toppingwinding}, due to Qing and Tian \cite{qingtian}, see also Lemma 2.9 in \cite{toppingannals}. While neck estimates have been recently made available for wave maps into spheres in the work of Grinis \cite{Grinis}, those estimates are non-quantitative, and thus it remains unclear how to use them to gain control on the rate of bubbling for wave maps, even if we were to adapt them to the case of general targets.  
\end{remark}

\subsection{Soliton Resolution Conjecture and Prior Work}\label{ss:priorwork}
One motivation for establishing Theorem \ref{t:main} stems from the large amount of recent activity in establishing the  {\it soliton resolution conjecture} for nonlinear wave and wave map equations.  The soliton resolution conjecture posits that solutions for a broad class of nonlinear dispersive equations should decompose asymptotically as a sum of ``bubbles" and radiation.  In the setting of (quasi-)equivariant wave maps, $\psi: \mathbb R^{2+1} \rightarrow \mathcal M$, the conjecture states that if $\psi$ develops at least one bubble at $t = \infty$, then there exists a collection of finite energy harmonic maps, $\{Q_j\}_{j=1}^J: \mathbb R^2 \rightarrow \mathcal M$, continuous scaling parameters, $0 < \lambda_1(t) \ll \lambda_2(t) \ll \ldots \ll \lambda_J(t) \ll t$, and a finite energy linear wave, $\phi_L$, such that 
\begin{equation}\label{e:solitonres}
\psi(-, t) = \phi_L(-, t) + \sum_{j=1}^J  Q_j\left(\frac{-}{\lambda_j(t)}\right) + \varepsilon(t),
\end{equation}
where $\varepsilon(t) \rightarrow 0$ in the appropriate function space as $t\rightarrow \infty$ (see the introduction of \cite{cote, Grinis} for a more detailed discussion of the soliton resolution conjecture for wave maps into the sphere).  In particular, \eqref{e:solitonres} implies uniqueness in the sense of Definition \ref{d:solitonresolution} for each of the bubbles, $Q_j$. We note that such a description goes beyond Theorem \ref{thm:st_dich} which, together with uniqueness in the sense of Definition \ref{d:solitonresolution}, only describes the dynamics of energy concentration at one scale. We refer to the examples of \cite{jendrejconstruction, jendrejlawrie} which demonstrate that such multi-scale concentration is in fact possible. 

Much progress has been made on the soliton resolution conjecture a variety of settings. Thus far, the full conjecture has been proved in the work of Duyckaerts, Kenig and Merle \cite{dkm3d} and \cite{dkmfullsoliton} for the radial, focusing energy-critical nonlinear wave equation in odd space dimensions, and wave maps into $\mathbb S^2$ under various symmetry and energy assumptions, see, for instance, \cite{ckls1, ckls2, djkm2, jendrejlawrie} and references therein\footnote{Since this article was first posted on the arXiv, full soliton resolution for ($k$-)equivariant wave maps into $\mathbb S^2$ has been established in \cite{DKMM} and \cite{JL2}}. Nonetheless, in many cases the conjecture in its full strength remains open. Often, one can show that an asymptotic decomposition in the vein of \eqref{e:solitonres} holds along a sequence of times $t_n \rightarrow T_{\max}$, see \cite{cote, jiakenig, djkm}. In these instances, the difficulty becomes proving that the decomposition is independent of the sequence $\{t_n\}$.  Our main theorem, Theorem \ref{t:main}, demonstrates that in some settings, it may be impossible to move beyond the soliton resolution along a sequence of times to the full conjecture.

\medskip
We hope that our theorem may shed some light on the specific difficulties in proving continuous time soliton resolution. Indeed, for quasi-equivariant non-linear wave equations there are two main ways in which the decomposition could depend on the sequence of times. The first is that the bubbles can ``switch places", i.e.  $Q_j$ may develop at a smaller scale than $Q_k$ along one sequence of times but at a larger scale along another sequence of times.
The second is that the bubbles, $Q_j$, themselves could depend on the sequence of times considered. In both cases, a careful understanding of the potential bubbles, i.e. non-trivial harmonic maps at specified energy levels, and possible interactions between bubbles separated in scale or in space has appeared to be a crucial ingredient in proofs which ultimately establish uniqueness for such problems  (see, e.g. \cite{jendrejlawrie, dkmfullsoliton} respectively). 

A full understanding of the possible bubbles and their interactions is often achieved by imposing symmetry assumptions; for example, there is one (up to rescaling) radially symmetric stationary solution to the non-linear wave equation considered in \cite{dkm3d}. In other situations, energy constraints can rule out multi-soliton configurations, see, for instance, \cite{ckls1, ckls2} which establishes continuous-time soliton resolution for equivariant wave maps into $\mathbb S^2$ with energy less than three times the energy of the lowest energy harmonic map into the sphere, and \cite{djkm2} for the result without the equivariance assumption when the energy is restricted to just above the energy of the ground state. In contrast to these cases, the target manifold in Theorem \ref{t:main} admits a continuum of quasi-equivariant harmonic maps $\omega: \mathbb R^2 \rightarrow \mathcal N$ at the lowest admissible non-zero energy level, and the richness of this family plays an essential role in the proof of non-uniqueness for Theorem~\ref{t:main}. 

Finally, we note one further aspect of our setting which contributes to non-uniqueness, specifically compared to the setting considered in \cite{djkm2}. While in both cases wave maps with energy just above the ground state are considered, in  \cite{djkm2} the authors exploit the fact that for wave maps into spheres, the energy is coercive near the traveling waves, which traps the wave map in increasingly small neighborhoods of the traveling wave, yielding uniqueness. In contrast, while the second component of our target manifold is the sphere, the first component has an infinite length geodesic which, using standard coordinates  $\mathbb{T}^2 = (w, z)$, wraps around the torus infinitely many times as it approaches the circle $\{w = 0\}$. This winding behavior allows the first coordinate of the wave map to exit a small neighborhood of a  soliton infinitely many times, even though the behavior of the second component of the map will be well controlled (i.e. the second component may be almost constant).

\subsection{Comparison with non-uniqueness in elliptic and parabolic problems}\label{ss:lojasiewicz}

For the harmonic map heat flow (the parabolic version of \eqref{eq:wm}), the analogue of the soliton resolution conjecture is also open, even when the target is $\mathbb S^2$, despite being an object of intense study.  For a small sample of this work, see \cite{toppingannals, rupflin, waldron} on questions of uniqueness and \cite{delPinoWei, toppingthesis} for constructions of solutions that blowup at multiple different points simultaneously or form non-trivial bubble trees at a single point. 

In \cite{toppingwinding}, Topping showed that the analogue of soliton resolution cannot hold for harmonic map heat flow without additional assumptions on the target manifold; our construction is heavily inspired by that paper. In particular, our target manifold, including the warping function, is essentially equivalent to Topping's (more on this in Section \ref{s:target}). We also use the same mechanism as Topping to ensure non-uniqueness, what he calls ``winding". Note that we record only the definition of winding as it applies in the quasi-equivariant setting, and a more general definition would take into account translational and Lorentz symmetries.

\begin{definition}[\protect{\cite[Definition 1.12]{toppingwinding}}]\label{d:winding}
A quasi-equivariant wave map $u: \mathbb R^{1+2} \rightarrow \mathcal M$ develops a winding bubble at time $T_{\max}$ and the origin $ 0 \in \mathbb R^2$ if there exists sequences $\{\lambda_n\}$, and $\{t_n\}$, satisfying
\begin{equation}
 t_n \uparrow T_{\max}, 
 \qquad \lambda_n = \begin{cases} o(T_{max} - t_n) & T_{max} < \infty \\
 o(t_n) & T_{max} = \infty \end{cases}
\end{equation}
such that $u(t_n, \lambda_nx) \rightarrow \omega(x)$ in $C^0_{\mathrm{loc}}(\bR^2\backslash \{x_0\}; \mathcal M)$, where $\omega$ is a non-constant harmonic map, the lifts $\hat{u}(t_n,\lambda_nx)$ have no convergent subsequence in $C^0_{\mathrm{loc}}(\bR^2\backslash \{x_0\}; \widehat{\mathcal M})$ (here and throughout, $\widehat{\mathcal M}$ is the universal cover of $\mathcal M$). 
\end{definition}

Despite the similarities in set-up, the execution of our proof differs substantially from Topping's. This is due to fundamental differences between the parabolic and Hamiltonian settings, even at the level of ODEs, which we had to overcome. To elucidate these issues, we elaborate here on two examples.

For analytic functions, $f$, bounded gradient flows, $\dot{x}(t) = -\nabla f(x(t))$, have a unique limit as $t\rightarrow \infty$, due to the \L ojasiewicz inequalities \cite{lojasiewicz}. It is a beautiful observation of L. Simon \cite{simonLoj} that this fact about ODEs can be applied to study the long time behavior of the gradient flows of many elliptic functionals which arise naturally in geometry. These ``\L ojasiewicz-Simon" inequalities have found subsequent use in a huge range of geometric and variational problems. For example, to prove uniqueness of tangent objects for variational problems (e.g. minimal surfaces, \cite{simonLoj} and mean curvature flow \cite{comi}) and to show the uniqueness of long time limits of geometric flows (e.g. Yamabe flows \cite{brendleyamabe}, harmonic map heat flow \cite{Feehan1}). However, this phenomena does not hold in the Hamiltonian setting. Indeed, if $f(x_1,x_2) = \frac{1}{2}(x_1^2 +x_2^2)$ then the equation $\ddot{x}(t) = -\nabla f(x(t))$ becomes $\ddot{x}(t) = -x(t)$. One solution to this ODE is the bounded flow $x(t) = (\cos(t), \sin(t))$, which clearly does not have a unique limit as $x(t) \rightarrow \infty$. 

When $f$ is not analytic, but is $C^\infty$, then the classic ``goat tracks" example \begin{equation}\label{e:goattracks} f(r,\theta) = \begin{cases} 1&\quad r \leq 1,\\
1 + e^{-\frac{1}{r-1}}\left(\sin(\frac{1}{r-1} + \theta) + 2\right)&\quad r > 1,
\end{cases}\end{equation} generates a gradient flow $\dot{x}(t) = -\nabla f(x(t))$ which is bounded but doesn't have a unique limit as $t\rightarrow \infty$, in fact, every point on the circle $r = 1$ is a limit point. This example is at the heart of Topping's construction \cite{toppingwinding} (see also the examples of non-uniqueness for singularities of harmonic maps, \cite{whitenonunique}, and the long term behavior of harmonic map heat flow, \cite{toppingrigidity}). In contrast, the corresponding flow given by $\ddot{x}(t) = -\nabla f(x(t))$ cannot exhibit the same asymptotic behavior as the gradient flow does.  The Hamiltonian flow stays bounded as long as $|x(0)|$ is close to one and $\dot{x}(0)$ is small enough, however, by working in polar coordinates, one can see that if $|x(t)| \rightarrow 1$ as $t\rightarrow \infty$ it must also be the case that $|\dot{x}(t)| \rightarrow 0$ as $t\rightarrow \infty$. This would violate the conservation of the energy, $|\dot{x}|^2(t) + f(x(t))$ (provided $\dot{x}(0) \neq 0$).

As these examples show, one needs caution when using long-term behavior of non-linear parabolic flows to provide insight into the Hamiltonian setting. On a practical level, while energy conservation provides some control for Hamiltonian flows, it is not a substitute for the maximum principle and energy dissipation, which hold in the parabolic setting. For example, as in \cite{toppingwinding} we show that the image of the flow is contained in a geodesic in the target manifold, see Lemma~\ref{l:imageofflow}. However, we are faced with the additional difficulty of showing that the flow cannot leave this geodesic before the blow-up time. We note that this issue is not present in the parabolic setting, where stationary solutions act as a barrier to constrain the flow.  Additionally, energy dissipation allows Topping to determine that his flow blows up in finite time via topological considerations and Lemaire's theorem. On the other hand, we must leave open the possibility that the winding singularity may occur at either finite or infinite time (we speculate either situation can occur).

Nonetheless, we believe that our proof exhibits, morally, a phenomenon exploited in work of  Grinis \cite{Grinis} (and observed earlier in work of \cite{CTZ93, sterbenztataruwavemaps1, sterbenztataruwavemaps2}): that nonlinear wave equations start to exhibit elliptic behavior in the (strict) interior of a light-cone in which energy is concentrating. 

\subsection{Structure of the Paper}
Here we briefly outline the structure of the paper. In Section \ref{s:wavemapprelim} we record some preliminaries about wave maps and harmonic maps into general Riemannian manifolds.

In Section \ref{s:target} we construct the target manifold $\mathcal{N}$ from Theorem \ref{t:main}. This follows much as in \cite[Section 3]{toppingwinding}, with additional complications caused by the fact that we are unable to use the maximum principle to constrain the image of the flow. We overcome this difficulty by working with a compact target manifold and carefully defining our metric twist globally on $\bT^2$. 

In Section \ref{s:flow} we establish some preliminary results on the Hamiltonian flow. In subsection \ref{ss:scattering} we also rule out ``scattering" to a solution of the underlying linear wave equation. In \cite{toppingwinding} the analogous parabolic phenomena, relaxation to a stationary solution, can be quickly ruled out using topology. The wave map setting requires a more involved estimate of the energy flux through the wall of the light cone. 

In Section \ref{s:harmonicmaps} we study the harmonic maps into $\mathcal{N}$ which can arise as bubbles in the flow. Here the analysis is complicated by the fact that the metric twist had to be defined globally on $\bT^2$. In particular, we use energy arguments to rule out bubbles which wrap ``the wrong way" around the $\bT^2$ component, see Lemma \ref{lem:bubbling_loc}. 

 Finally, in Section \ref{s:winding} we establish properties of the bubbling, in particular winding, proving Theorem \ref{t:main}.

\subsection{Acknowledgements}
This work was done while ME was visiting for the AY 2019-2020 the University of Chicago; he thanks the department and especially Carlos Kenig for their hospitality. He also thanks Carlos Kenig for suggesting he investigate the phenomena of bubbling for harmonic map heat flow. DM learned about Topping's work on non-uniqueness for the harmonic map heat flow bubbling, and the open question of non-uniqueness for wave maps bubbling, in a topics course given by Carlos Kenig at the University of Chicago. Both ME and DM benefited from several discussions with Andrew Lawrie, including discussions about Topping's work on harmonic map heat flow and about scattering for wave maps. The authors thank both Carlos Kenig and Andrew Lawrie for helpful comments on an earlier version of this manuscript. The authors also thank several anonymous referees for their helpful comments and close reading. 

\section{Preliminaries for Wave Maps and Harmonic Maps}\label{s:wavemapprelim}
In this section we collect some basic facts about the regularity of (quasi-equivariant) wave maps and harmonic maps. As mentioned in the introduction, we will consider wave maps
\[
u: \mathbb{R}^{1+2} \to \mathbb{T}^2 \times_f \mathbb{S}^2 \equiv \mathcal N 
\]
for a certain $C^\infty(\bT^2)$ warping function, $f$. Using polar coordinates in the domain of $u$, and coordinates $(x,y, \alpha, \theta)$ in the target,\footnote{$\alpha$ is the polar angle and runs from $0$ to $\pi$, $\theta$ is the azimuth angle which runs from $0$ to $2\pi$. We define the $x,y$ coordinates on $\bT^2$ through a change of variables in \eqref{e:cotangentchange}.} we suppose that the initial condition has the form
\begin{equation}\label{e:initconditions}
u_0(r,\theta) = (0, y_0, \alpha_0(r), \theta),\qquad u_1(r,\theta) = (0, y_1(r), \alpha_1(r), 0).
\end{equation}

We will rely on the following local existence and persistence of regularity result, which follows via energy estimates and holds for wave maps in general. Note too that the optimal local theory is known, see for instance \cite{KlS97}.

\begin{proposition}[Classical local existence and persistence of regularity]\label{p:existence}
Let $s_0 > 1$ and let $(u_0, u_1) \in \dot H^{s_0 + 1} \times \dot H^{s_0}$. There exists a $T_{max} \equiv T_{max}(u) > 0$ such that for every $T < T_{max}$, there exists a unique solution
\[
u:[0,T] \times \mathbb{R}^{1+2} \to \mathcal{N}
\]
 of \eqref{eq:wm} such that
\begin{align}\label{eq:sobolev_bds}
\sup_{0 < t \leq T} \|u(t) \|_{\dot H^{s_0 + 1}} < \infty.
\end{align}
Moreover, if $(u_0, u_1) \in H^{s + 1} \times H^{s}$ for any $s > s_0$, then
\[
\sup_{0 < t \leq T} \| u(t) \|_{ \dot H^{s + 1}} < \infty.
\]
\end{proposition}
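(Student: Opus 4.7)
The plan is to follow a classical energy-method argument using the extrinsic formulation of the wave map system. Since $\mathcal N$ is compact, Nash's embedding theorem lets us realize $\mathcal N \hookrightarrow \bR^N$ isometrically, and \eqref{eq:wm} is equivalent, for $\mathcal N$-valued maps, to the vector-valued semilinear wave equation
\[
\Box u = A(u)(\partial_\alpha u, \partial^\alpha u),
\]
where $A$ is the second fundamental form of $\mathcal N$ in $\bR^N$, smoothly extended off $\mathcal N$. Compactness of $\mathcal N$ ensures that $A$ and all its derivatives are uniformly bounded, so the right-hand side is a smooth quadratic form in $\nabla u$ with coefficients depending smoothly on $u$.

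The central a priori estimate controls
\[
E_{s_0}(t) := \|u(t)\|_{\dot H^{s_0+1}}^2 + \|\partial_t u(t)\|_{\dot H^{s_0}}^2.
\]
Applying $|\nabla|^{s_0}$ to both sides of the equation, pairing with $\partial_t |\nabla|^{s_0} u$, and integrating in space yields
\[
\tfrac{d}{dt} E_{s_0}(t) \lesssim \|\partial_t u(t)\|_{\dot H^{s_0}}\,\|A(u)(\nabla u,\nabla u)\|_{\dot H^{s_0}}.
\]
Since $s_0 > 1$, standard Sobolev embeddings on $\bR^2$ give
\[
\|\nabla u\|_{L^\infty} \lesssim \|\nabla u\|_{L^2} + \|u\|_{\dot H^{s_0+1}},
\]
and the Moser-type product/composition inequality
\[
\|A(u)(\nabla u,\nabla u)\|_{\dot H^{s_0}} \lesssim C(\|u\|_{L^\infty})\,\|\nabla u\|_{L^\infty}\,\|\nabla u\|_{\dot H^{s_0}}
\]
closes the estimate as $\tfrac{d}{dt} E_{s_0} \lesssim E_{s_0}^{3/2}$ (using the conserved energy \eqref{eq:energy} to absorb the low-frequency $\|\nabla u\|_{L^2}$ term). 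Integrating this differential inequality produces a lower bound on the existence time $T_{\max}(u) > 0$ in terms of the initial data.

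Existence follows by a standard regularize-and-pass-to-the-limit procedure: approximate $(u_0,u_1)$ by smoother data $(u_0^{(n)},u_1^{(n)})$, construct classical solutions $u^{(n)}$ via Picard iteration in a sufficiently regular space in the extrinsic formulation, and apply the uniform $E_{s_0}$ bound to extract a weak-$\ast$ limit solving the equation. The constraint $u(t,x)\in\mathcal N$ is preserved along the flow because $A$ takes values in the normal bundle, so $\mathrm{dist}(u,\mathcal N)^2$ satisfies a homogeneous linear wave inequality with vanishing data and thus remains identically zero. Uniqueness is obtained by setting $w = u - v$, expanding
\[
A(u)(\nabla u,\nabla u) - A(v)(\nabla v,\nabla v) = \bigl(A(u) - A(v)\bigr)(\nabla u,\nabla u) + A(v)(\nabla u + \nabla v,\nabla w),
\]
and applying a Gr\"onwall argument at the $H^1 \times L^2$ level; these norms are subcritical under the $L^\infty$ control on $\nabla u,\nabla v$ provided by the $\dot H^{s_0+1}$ bound.

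For the persistence claim at level $s > s_0$, we repeat the energy estimate at the higher regularity. The crucial point is that $\|\nabla u\|_{L^\infty}$ is now already bounded on $[0,T]$ by the already-established $E_{s_0}$ estimate, so the Moser inequality at level $s$ yields the linear inequality
\[
\tfrac{d}{dt} E_s(t) \lesssim C\bigl(\|u\|_{\dot H^{s_0+1}}\bigr)\, E_s(t),
\]
and Gr\"onwall gives boundedness of $E_s$ on $[0,T]$. The main technical obstacle throughout is establishing the Moser-type tame estimate for $A(u)\cdot\nabla u\cdot\nabla u$ in $\dot H^s$; this is standard via the Kato--Ponce inequality combined with the composition estimate for smooth functions of Sobolev maps, but the nonlinear structure must be carefully unpacked so that at the top regularity the high-frequency derivatives fall on only one factor at a time.
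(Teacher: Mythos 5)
The paper itself gives no proof of Proposition~\ref{p:existence}; it simply states that the result ``follows via energy estimates'' and cites the classical references (Shatah--Struwe, Klainerman--Selberg). Your sketch is precisely the classical extrinsic energy-method argument those references carry out, and it is correct in all essentials: the Nash embedding, the $\dot H^{s_0+1}\times\dot H^{s_0}$ energy estimate closed via $H^{s_0}(\bR^2)\hookrightarrow L^\infty$ (using $s_0>1=d/2$) and a Kato--Ponce/Moser tame estimate, uniqueness by a Gr\"onwall argument at the $H^1\times L^2$ level with the $L^\infty$ gradient bound supplied by the $\dot H^{s_0+1}$ control, and persistence at level $s>s_0$ by observing that once $\|\nabla u\|_{L^\infty}$ is bounded the higher-order inequality becomes linear. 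Two small points worth tightening: (i) the differential inequality should be written $\tfrac{d}{dt}E_{s_0}\lesssim \mathcal E(u_0,u_1)^{1/2}E_{s_0}+E_{s_0}^{3/2}$ after separating the conserved low-frequency term, which still gives a lower bound on $T_{\max}$; (ii) the constraint-preservation argument is more accurately a Gr\"onwall estimate for the energy of the normal projection $\pi^\perp(u)$ (the inhomogeneity is quadratic in $\nabla u$ but vanishes quadratically in $\mathrm{dist}(u,\mathcal N)$), rather than a literally homogeneous linear wave inequality — you should also note that since the target is compact, membership of $u$ in $\dot H^{s_0+1}$ is understood after subtracting a constant background.
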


The uniqueness conclusion of Proposition \ref{p:existence} implies that the wave map retains rotational and quasi-equivarient symmetry. Therefore, the solution will have the form
\begin{align}\label{equ:sol_ansatz}
u(t, r,\theta) = (X(t,r), Y(t,r) , \alpha(t,r), \theta).
\end{align}

In the coordinates of \eqref{equ:sol_ansatz}, the energy has the form
\begin{equation}\label{e:energy} 
\mathcal{E}(t) = 2\pi \int_0^\infty \left(\frac{1}{2}|\nabla (X,Y)|^2 + \frac{1}{2}|\partial_t (X,Y)|^2 + f(X,Y)e(\alpha)\right)rdr, 
\end{equation} 
where $e(\alpha)$ is the ``spherical" part of the energy
\[
e(\alpha) := \frac{1}{2} \left[ \left( \frac{\partial \alpha}{\partial r}\right)^2 +\left( \frac{\partial \alpha}{\partial t}\right)^2+ \frac{\sin^2 \alpha}{ r^2} \right].
\]

If the initial conditions \eqref{e:initconditions} have finite energy, $\mathcal{E}(u_0,u_1) < \infty$, then $\mathcal{E}(t) \equiv \mathcal{E}(u_0, u_1)$ for all $t < T_{\max}$. Furthermore, if $u$ bubbles to $\omega$ at $T_{\max}$ in the sense of Theorem \ref{thm:st_dich} or the analogous finite time result, then as stated in case (a) of that theorem, $\int |D\omega|^2 \leq \mathcal{E}(u_0, u_1)$.

In order to satisfy 
\[
\int_0^\infty f(X,Y) e(\alpha) rdr < \infty
\]
for all $t$ it must be the case that $\alpha(t, 0) = m \pi$ and $\alpha(t, \infty) = n \pi$ for all $t \in I_{max}(u)$ for some integers $n, m$. That the integers must be constant for all $t$ follows from the continuity in time of the flow. We will take $m=0$ in the sequel, and we define the \emph{degree} of the wave map to be the integer $n$.  

\subsection{Regularity for Weakly Harmonic Maps from $\bR^2$}\label{ss:weakharmonic}

Let $(\mathcal M, g)$ be a closed smooth Riemannian manifold. Throughout we will assume that $\mathcal M$ is smoothly embedded into $\mathbb R^n$.

Weakly harmonic maps $\omega: \bR^2 \rightarrow \mathcal M \subset \mathbb R^n$ are critical points of the energy $E(u) \equiv \int_{\bR^2} |Du|^2_g$ under perturbations of the form $u_\varepsilon(x) = \pi (u + \varepsilon \varphi(x))$ where $\pi$ is the nearest point projection of $\mathbb R^n$ onto $\mathcal M$, which is well defined and smooth in a small neighborhood of $\mathcal M$, and $\varphi \in C^1_c(\bR^2; \bR^n)$. 

Equivalently, these maps (weakly) satisfy the semi-linear PDE: $$-\Delta \omega + A(\omega)(\nabla \omega, \nabla \omega) = 0$$ where $A(\omega)$ is the second fundamental form of $\mathcal M$. In general, weakly harmonic maps need not be continuous (see, e.g. \cite{riviere}), but when the domain is two-dimensional all weakly harmonic maps are $C^\infty$ by the fundamental result of \cite{helein}. As such, we will refer to weakly harmonic maps from $\mathbb R^2$ as simply harmonic maps. Finally, if $\omega: \bR^2 \rightarrow \mathcal M$ is harmonic, then (composing with a stereographic projection) we get a harmonic map $\widetilde{\omega}: \mathbb S^2\backslash\{\infty\} \rightarrow \mathcal M$, which we can smoothly extend to all of $\mathbb S^2$ by the work of Sachs-Uhlenbeck \cite{SacksUhlenbeck}. We will often abuse notation and identify the harmonic maps $\omega, \widetilde{\omega}$. 

\medskip
Our first theorem quantifies the regularity of two-dimensional harmonic maps (the precise statement for non-minimizing maps on $\bR^2$ is hard to track down. However, one can argue as in \cite{schoenuhlenbeck} or consider the stationary case of the parabolic regularity proven in \cite{struweflow}):

\begin{theorem}\label{t:hmregularity}
Let $\omega: \mathbb R^2 \rightarrow \mathcal M$ be a harmonic map. There exists an $\bar{\varepsilon} > 0$ small, and depending on $\mathcal M$, such that if \begin{equation}\label{e:smallenergy} \int_{B_R(x_0)} |D\omega|^2 \leq \bar{\varepsilon},\end{equation} then, \begin{equation}\label{e:interiorgradientestimate} R^2\sup_{B_{R/2}(x_0)}|D\omega|^2 \leq C\int_{B_R(x_0)} |D\omega|^2 dx,\end{equation} where $C > 0$ depends on $\mathcal N$ but not on $x_0, R$ or $\omega$.  Equivalently, given the small energy condition \eqref{e:smallenergy}
\begin{equation}\label{e:interioroscillationestimate} \sup_{x,y\in B_{R/2}(x_0)} d_{\mathcal M}(\omega(x),\omega(y)) \leq C \left(\int_{B_R(x_0)} |D\omega|^2 dx\right)^{1/2},
\end{equation} 
where $d_{\mathcal{M}}(p,q)$ is the geodesic distance between $p,q \in \mathcal{M}$ and $C > 0$ is as above.
\end{theorem}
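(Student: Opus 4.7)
\emph{Proof plan.} My plan is to establish the estimate via the standard Bochner identity plus Moser iteration argument for two-dimensional harmonic maps into a compact target; the full details can be extracted from the methods of \cite{schoenuhlenbeck} or the stationary case of \cite{struweflow}. First, I will exploit the conformal invariance of the Dirichlet energy in dimension two and the scale invariance of the harmonic map equation to reduce, via $\tilde\omega(y) := \omega(x_0 + Ry)$, to the normalized case $x_0 = 0$, $R = 1$: both sides of \eqref{e:interiorgradientestimate} transform in the same way, so the task becomes showing that $\int_{B_1}|D\omega|^2 \leq \bar\varepsilon$ implies $\sup_{B_{1/2}}|D\omega|^2 \leq C\int_{B_1}|D\omega|^2$. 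Since $\omega$ is already smooth (by H\'elein's theorem, recalled in the paragraph preceding the statement), every computation below will be pointwise.

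Setting $e := |D\omega|^2$ and using the harmonic map equation $-\Delta\omega = A(\omega)(\nabla\omega,\nabla\omega)$, together with the fact that $|A|$ and the sectional curvature of $\mathcal N$ are bounded by some $C_0(\mathcal N)$ since $\mathcal N$ is compact, a direct differentiation produces the pointwise Bochner-type inequality $-\Delta e \leq C_0\, e^2$ on $B_1$. I will then run Moser iteration on this subsolution: testing against $\eta^2 e^{p-1}$ for a cutoff $\eta$ and exponent $p \geq 1$, integration by parts and Cauchy--Schwarz yield a Caccioppoli-type estimate of the form
\[
\int \eta^2 \bigl|\nabla e^{p/2}\bigr|^2 \leq C(p)\int |\nabla\eta|^2 e^p + C(p)\int \eta^2 e^{p+1}.
\]
The two-dimensional Sobolev embedding $W_0^{1,2}(B_1)\hookrightarrow L^q$ (valid for every finite $q$), combined with H\"older, controls the critical nonlinear term $\int \eta^2 e^{p+1}$ by $\|e\|_{L^1(B_1)}$ times a copy of the left-hand gradient term; choosing $\bar\varepsilon$ small in terms of $C_0$ and the Sobolev constant then permits absorption. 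Iterating in $p$ along a shrinking family of balls interpolating between $B_1$ and $B_{1/2}$, in the de Giorgi--Moser style, yields $\sup_{B_{1/2}} e \leq C\int_{B_1} e$, which after undoing the scaling is exactly \eqref{e:interiorgradientestimate}.

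The oscillation estimate \eqref{e:interioroscillationestimate} follows immediately by integrating the gradient bound along a segment: for $x,y \in B_{R/2}(x_0)$ the segment from $x$ to $y$ stays in $B_{R/2}(x_0)$, so $d_{\mathcal N}(\omega(x),\omega(y)) \leq |x-y|\sup_{B_{R/2}}|D\omega|$, and \eqref{e:interiorgradientestimate} supplies the bound with the factor of $R$ cancelling. The principal technical obstacle is closing the Moser iteration at the two-dimensional critical scaling, where $W^{1,2}$ fails to embed into $L^\infty$: the smallness of $\bar\varepsilon$ is precisely the quantitative smallness needed to absorb the quadratic nonlinearity into the elliptic dissipation, and this is the only place the specific size of $\bar\varepsilon$ enters the argument.
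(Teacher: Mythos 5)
The paper does not prove this theorem; it simply cites \cite{schoenuhlenbeck} and the stationary case of \cite{struweflow}. You correctly identify these references and the overall Bochner-plus-iteration architecture, and your scaling reduction and the derivation of \eqref{e:interioroscillationestimate} from \eqref{e:interiorgradientestimate} are both fine. However, the central step of your Moser iteration has a genuine gap.

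You claim that Sobolev embedding plus H\"older lets you bound $\int \eta^2 e^{p+1}$ by $\|e\|_{L^1(B_1)}$ times the gradient term, so that smallness of $\bar\varepsilon = \|e\|_{L^1}$ permits absorption. This is not correct. Writing $\int \eta^2 e^{p+1} = \int (\eta e^{p/2})^2\, e$ and applying H\"older with exponents $s, s'$, the two-dimensional Sobolev embedding controls $\|(\eta e^{p/2})^2\|_{L^s}$ by $\|\nabla(\eta e^{p/2})\|_{L^2}^2$ for any finite $s$, but then you are left with $\|e\|_{L^{s'}}$ with $s' > 1$ strictly, not $\|e\|_{L^1}$; the exponent $s' = 1$ is precisely the endpoint you cannot reach. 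The failure is not a technicality: the differential inequality $-\Delta e \leq C_0 e^2$ together with small $L^1$ norm alone does \emph{not} imply a mean value inequality in two dimensions. Taking $e_\delta(x) = \delta^{-2}\phi(x/\delta)$ for a fixed bump $\phi$ produces subsolutions (after choosing $\phi$ with $-\Delta\phi \leq C\phi^2$) with $\int e_\delta$ bounded and small but $\sup e_\delta \to \infty$. So one must use more than the subsolution structure.

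What the cited proofs actually do is insert an intermediate step — often called Heinz's trick or Schoen's scaling argument — \emph{before} Moser iteration. One considers $\max_{0\le\sigma\le 1}(1-\sigma)^2 \sup_{B_\sigma} e$, locates its maximizer, rescales the harmonic map by $\sqrt{e_0}$ around the point of maximum (this uses crucially that the rescaled map is still a harmonic map into $\mathcal N$ with the same Bochner constant, and that energy is scale invariant in two dimensions), and derives a contradiction with $\bar\varepsilon$ small if the rescaled map lives on a ball of radius at least one. This yields an a priori bound $\sup_{B_{3/4}} e \le C_1$ with $C_1$ universal. Only then does $-\Delta e \le C_0 C_1 e$ hold on $B_{3/4}$, which is a subsolution inequality with bounded potential, and the standard Moser iteration applies to give \eqref{e:interiorgradientestimate}. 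The smallness of $\bar\varepsilon$ is used in the contradiction argument, not to absorb a critical term inside the iteration. Your sketch omits this step entirely, and without it the iteration does not close.
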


There are two standard corollaries of Theorem \ref{t:hmregularity} which will be important to us. The first states that there is a least energy non-trivial harmonic map into any target. 

\begin{corollary}\label{c:leastenergy}
For any weakly harmonic map $\omega: \bR^2 \rightarrow \mathcal M$, one of following two hold: 
\begin{itemize}
\item $\mathcal E(\omega) \equiv \int_{\bR^2}|D\omega|^2 = 0$ and $\omega$ is almost everywhere equal to a constant $p \in \mathcal M$. 
\item $\mathcal E(\omega) \geq \bar{\varepsilon} > 0$, where $\bar{\varepsilon}$ is the constant from Theorem \ref{t:hmregularity}
\end{itemize}
\end{corollary}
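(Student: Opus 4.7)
\textbf{Proof proposal for Corollary \ref{c:leastenergy}.} The plan is to show that the dichotomy is forced by the interior gradient estimate of Theorem \ref{t:hmregularity} applied on arbitrarily large balls, exploiting the fact that the total energy on $\mathbb{R}^2$ provides a \emph{uniform} bound on the local energy on every ball.

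First, I would invoke the smoothness of weakly harmonic maps on two-dimensional domains, which was recalled in Subsection \ref{ss:weakharmonic} via Hélein's theorem. This lets me work with the pointwise value of $|D\omega|$ rather than with a weak notion, so the conclusion $\omega \equiv \mathrm{const}$ will follow once I establish $D\omega \equiv 0$ pointwise.

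Next, suppose for contradiction that $\omega$ is not (almost everywhere) constant but $\mathcal{E}(\omega) < \bar{\varepsilon}$, where $\bar{\varepsilon}$ is the constant from Theorem \ref{t:hmregularity}. For any fixed $x_0 \in \mathbb{R}^2$ and any $R > 0$, monotonicity of the integral gives
\begin{equation}
\int_{B_R(x_0)} |D\omega|^2 \,dx \;\leq\; \int_{\mathbb{R}^2} |D\omega|^2 \,dx \;=\; \mathcal{E}(\omega) \;<\; \bar{\varepsilon},
\end{equation}
so the small-energy hypothesis \eqref{e:smallenergy} is satisfied on every ball. The interior gradient estimate \eqref{e:interiorgradientestimate} therefore yields
\begin{equation}
|D\omega(x_0)|^2 \;\leq\; \sup_{B_{R/2}(x_0)}|D\omega|^2 \;\leq\; \frac{C}{R^2}\int_{B_R(x_0)} |D\omega|^2\,dx \;\leq\; \frac{C\,\mathcal{E}(\omega)}{R^2},
\end{equation}
with $C$ depending only on $\mathcal{N}$. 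Letting $R \to \infty$ forces $|D\omega(x_0)| = 0$, and since $x_0$ was arbitrary, $D\omega \equiv 0$ on $\mathbb{R}^2$. Connectedness of $\mathbb{R}^2$ then implies $\omega$ is constant, and in particular $\mathcal{E}(\omega) = 0$. Conversely, if $\mathcal{E}(\omega) = 0$ then $D\omega \equiv 0$ a.e., hence $\omega$ is a.e.\ equal to a constant $p \in \mathcal{N}$. This establishes the dichotomy.

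There is no real obstacle here: the argument is essentially a one-line application of Theorem \ref{t:hmregularity} once one notes that finite total energy on $\mathbb{R}^2$ automatically propagates the smallness hypothesis to every ball. The only point requiring a minor comment is invoking Hélein's regularity to upgrade the weakly harmonic map to a smooth one, so that the pointwise bound on $|D\omega(x_0)|$ produced by \eqref{e:interiorgradientestimate} is meaningful; this is already set up in Subsection \ref{ss:weakharmonic}.
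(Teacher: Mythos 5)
Your proof is correct and gives exactly the standard Liouville-type argument this ``standard corollary'' was meant to invoke (the paper itself leaves it unproved). The crucial observation — that finite total energy below $\bar{\varepsilon}$ makes the small-energy hypothesis \eqref{e:smallenergy} hold on every ball $B_R(x_0)$, so that \eqref{e:interiorgradientestimate} gives $|D\omega(x_0)|^2 \leq C\mathcal{E}(\omega)/R^2 \to 0$ as $R \to \infty$ — is precisely the point, and your use of Hélein's regularity to make the pointwise bound meaningful is correctly placed.
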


The second corollary states that the shortest path between two points in the image of a harmonic map cannot have infinite length: 

\begin{lemma}\label{l:finitepaths}
Let $\omega: \bR^2 \rightarrow \mathcal M$ be a weakly harmonic map. There cannot be two points $p_1, p_2 \in \mathrm{Im}\,\omega$ such that the shortest path between the two points in $\mathcal N$ has infinite length. 
\end{lemma}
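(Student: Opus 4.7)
The plan is to show directly that for any $p_1, p_2 \in \mathrm{Im}\,\omega$ one can exhibit an explicit rectifiable path in $\cN$ connecting $p_1$ and $p_2$; this would give a finite upper bound on the geodesic distance between them and contradict the hypothesis that the shortest such path has infinite length.

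Concretely, I would pick preimages $x_1, x_2 \in \bR^2$ with $\omega(x_i) = p_i$, take the straight line segment $\gamma : [0,1] \to \bR^2$ given by $\gamma(t) = (1-t)x_1 + tx_2$, and use the image curve $\omega \circ \gamma$ in $\cN$ as the competitor path. Its length is bounded by
\[
\ell(\omega \circ \gamma) = \int_0^1 |D\omega(\gamma(t))\,\gamma'(t)|_g\, dt \leq |x_2 - x_1|\,\sup_{y \in \gamma([0,1])} |D\omega(y)|_g,
\]
so it suffices to bound $|D\omega|$ on the compact segment $\gamma([0,1])$. For this I would invoke H\'elein's theorem, recalled in subsection \ref{ss:weakharmonic}: any weakly harmonic map from a two-dimensional domain into $\cN$ is $C^\infty$, so in particular $|D\omega|$ is continuous on $\bR^2$ and therefore bounded on $\gamma([0,1])$. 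This forces $\ell(\omega \circ \gamma) < \infty$, and hence $d_\cN(p_1, p_2) \leq \ell(\omega \circ \gamma) < \infty$, contradicting the hypothesis of the lemma.

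The only non-trivial ingredient is H\'elein's regularity theorem, which is imported as a black box; otherwise the argument is a soft continuity statement and I do not anticipate a genuine obstacle. The conceptual point worth emphasizing is that $\cN$ itself may well contain pairs of points at infinite geodesic distance — the target constructed for Theorem \ref{t:main} is engineered precisely to have this feature, as highlighted in the introduction — so the content of Lemma \ref{l:finitepaths} is the specific statement that no such pair can simultaneously lie in the image of a single weakly harmonic map from $\bR^2$.
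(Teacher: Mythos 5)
Your argument is correct and does establish the lemma, but it takes a different route than the paper. The paper covers the segment $L_{x,y}$ with finitely many balls on which the Dirichlet energy is less than $\bar{\varepsilon}$ and then chains the oscillation estimate \eqref{e:interioroscillationestimate} from Theorem~\ref{t:hmregularity} across those balls; you instead use H\'elein's $C^\infty$ regularity to bound $\sup_{\gamma([0,1])} |D\omega|$ and integrate. Your version is shorter once smoothness is in hand. The paper's version is structured so that the only analytic input is the $\varepsilon$-regularity/oscillation estimate together with absolute continuity of the integral $\int |D\omega|^2\,dx$ (the invocation of $C^\infty$ there is not really load-bearing), which is the form of argument that survives in settings where one only has partial regularity rather than full smoothness; this is why the paper routes the proof through Theorem~\ref{t:hmregularity}, which it also needs elsewhere (e.g.\ in Lemma~\ref{lem:bubbling_loc}).

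One thing worth correcting: your concluding remark that ``$\cN$ itself may well contain pairs of points at infinite geodesic distance'' and that the target of Theorem~\ref{t:main} is engineered to have this feature is not right. The target $\cN = \bT^2\times_f \bS^2$ is a compact manifold with a smooth Riemannian metric (the metric $h$ on $\bT^2$ is smooth with $\det h = \pi^2 > 0$ everywhere, and $f \geq 1$ is smooth), so $\cN$ has finite diameter and every pair of points is at finite geodesic distance. What the construction produces is a single geodesic $\gamma$ of \emph{infinite length} which accumulates on the circle $\{w=0\}$ without meeting it; this is quite different from two points being at infinite distance. The nontrivial content that the paper extracts downstream is that the image of a harmonic bubble, constrained to lie in $\overline{\gamma}\times_f\bS^2$, cannot meet both $\gamma$ and $\{w=0\}$ -- an intrinsic statement about the infinite-length path $\gamma$, not about the ambient distance in $\cN$.
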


\begin{proof}
Let $x,y \in \bR^2$ and let $L_{x,y}$ be the line segment connecting the two. Let $E$ be the energy of the harmonic map. Since $\omega\in C^\infty(\bR^2)$,  for all $z \in L_{x,y}$ there exists an $r_z > 0$ such that 
\[
\int_{B_{r_z}(z)}|D\omega|^2 < \bar{\varepsilon}.
\]
By compactness there are finitely many $z_i \in L_{x,y}$ such that 
\[
L_{x,y}\subset \bigcup_{i=1}^M B_{r_{z_i}}(z_i).
\]
Using the oscillation estimate \eqref{e:interioroscillationestimate}, we have that $d_{\mathcal M}(\omega(x), \omega(y)) \leq CM\sqrt{\bar{\varepsilon}} < \infty$. 
\end{proof}

We end this section with some elementary facts about harmonic maps $\omega: \mathbb S^2 \rightarrow \mathbb S^2$. 

\begin{lemma}\label{lem:hmspheres}
Let $\omega$ be a harmonic map $\mathbb S^2 \rightarrow \mathbb S^2$.  Let $\mathcal E_{\mathbb S^2}$ denote the lowest energy level of a non-trivial harmonic map between spheres, which is guaranteed to exist by Corollary \ref{c:leastenergy}. There is a unique (up to a conformal transformation of $\mathbb{S}^2$) equivariant harmonic map $\omega(r,\theta) = (\alpha(r), \theta)$ such that $\mathcal E(\omega) = \mathcal E_{\mathbb S^2}$.  Furthermore, if $\omega$ is equivariant and $\mathcal E(\omega) > \mathcal E_{\mathbb S^2}$ then it must be that $\mathcal E(\omega) \geq 2 \mathcal E_{\mathbb S^2}$. 
\end{lemma}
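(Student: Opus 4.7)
My plan is to reduce the problem to an explicit ODE analysis. Extending $\omega$ via Sacks-Uhlenbeck \cite{SacksUhlenbeck} (already invoked in Subsection \ref{ss:weakharmonic}), we may equivalently study equivariant harmonic maps $\omega: \bR^2 \to \bS^2$. The equivariance hypothesis, together with the fact that any one-parameter subgroup of $\mathrm{Isom}(\bS^2)$ is a rotation about some axis, forces $\omega(r,\theta) = (\alpha(r), k\theta + \theta_0)$ in spherical coordinates on $\bS^2$ for some $k \in \bZ$, and smoothness of the extension at $r = 0, \infty$ gives $\alpha(0), \alpha(\infty) \in \pi\bZ$. Substituting this ansatz into the harmonic map equation reduces it to
\[
\alpha_{rr} + \frac{1}{r}\alpha_r = \frac{k^2 \sin\alpha\cos\alpha}{r^2},
\]
and after the change of variables $s = \log r$ one obtains the pendulum-type equation $\alpha_{ss} = k^2 \sin\alpha\cos\alpha$, with conserved Hamiltonian $H = \tfrac{1}{2}\alpha_s^2 - \tfrac{k^2}{2}\sin^2\alpha$.

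Next, I would pin down $H \equiv 0$ using finite energy. The energy in these variables takes the form $\mathcal{E}(\omega) = \pi \int_\bR (\alpha_s^2 + k^2 \sin^2\alpha) \, ds$, and finiteness together with the boundary values in $\pi\bZ$ forces $\alpha_s \to 0$ and $\sin\alpha \to 0$ at $s = \pm\infty$; hence $H \equiv 0$. On the zero-energy level the first-order reduction $\alpha_s = \pm |k| \sin\alpha$ admits only the equilibria $\alpha \equiv m\pi$ and the explicit heteroclinic orbits $\alpha(s) = 2\arctan(\lambda e^{|k| s})$, or equivalently $\alpha(r) = 2\arctan(\lambda r^{|k|})$, connecting adjacent integer multiples of $\pi$. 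This is the key step, since it classifies every non-trivial equivariant harmonic map into $\bS^2$ as a one-parameter family indexed by $\lambda > 0$ and $k \in \bZ_{\neq 0}$ (together with the reflection $\alpha \mapsto \pi - \alpha$).

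With this classification in hand, I would compute the energy using the substitution $ds = d\alpha / (|k|\sin\alpha)$:
\[
\mathcal{E}(\omega) = 2\pi k^2 \int_{-\infty}^\infty \sin^2\alpha \, ds = 2\pi |k| \int_0^\pi \sin\alpha \, d\alpha = 4\pi |k|.
\]
Both halves of the lemma now follow. The minimum energy $\mathcal{E}_{\bS^2} = 4\pi$ is attained precisely when $|k| = 1$, and in stereographic coordinates on both domain and target the solutions $\alpha(r) = 2\arctan(\lambda r)$ realize the map $z \mapsto \lambda z$; any two are conjugate via the target dilation $w \mapsto (\lambda'/\lambda) w$, which establishes uniqueness up to a conformal transformation of $\bS^2$. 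On the other hand, any equivariant harmonic map of strictly greater energy must have $|k| \geq 2$, so $\mathcal{E}(\omega) = 4\pi|k| \geq 8\pi = 2\mathcal{E}_{\bS^2}$. The only real subtlety to track is ensuring that the ODE classification is exhaustive, i.e.\ that no exotic finite-energy equivariant solutions (oscillatory, unbounded in $s$, etc.) slip past the argument; this is precisely what the conservation-law reduction to $H \equiv 0$ on the full real line rules out.
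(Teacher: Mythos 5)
The paper states this lemma without proof (it is labelled as an ``elementary fact''), so there is no authorial argument to compare against. Your ODE reduction is the standard and natural proof of the equivariant classification, and the core of it is correct: the pendulum equation $\alpha_{ss} = k^2\sin\alpha\cos\alpha$, the reduction to $H \equiv 0$ via finite energy plus smooth Sacks--Uhlenbeck extension at the poles, the heteroclinic solutions $\alpha(r) = 2\arctan(\lambda r^{|k|})$, and the energy identity $\mathcal{E}(\omega) = 4\pi|k|$ (in the $\tfrac{1}{2}\int|D\omega|^2$ convention used by the paper) are all right. The uniqueness-up-to-conformal-transformation conclusion also follows correctly from the explicit one-parameter family $\lambda \mapsto 2\arctan(\lambda r)$.

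There is, however, one genuine gap. As stated in the lemma and in Corollary \ref{c:leastenergy}, $\mathcal{E}_{\mathbb{S}^2}$ is the infimum of energy over \emph{all} non-trivial harmonic maps $\mathbb{S}^2 \to \mathbb{S}^2$, not merely the equivariant ones. Your computation shows that the minimal energy in the equivariant class is $4\pi$, but you then write ``$\mathcal{E}_{\mathbb{S}^2} = 4\pi$'' without ruling out a non-equivariant harmonic map of lower energy. If such a map existed, the first assertion of the lemma (that some equivariant map of the form $(\alpha(r), \theta)$ attains energy exactly $\mathcal{E}_{\mathbb{S}^2}$) would simply be false. Closing this gap requires the classical input that every harmonic map $\mathbb{S}^2 \to \mathbb{S}^2$ is holomorphic or anti-holomorphic (the Hopf differential of any harmonic map from $\mathbb{S}^2$ vanishes, and a conformal harmonic map into $\mathbb{S}^2$ is $\pm$holomorphic, cf.\ Eells--Wood / Lemaire), whence its energy equals $4\pi|\deg\omega|$ and the non-trivial minimum is indeed $4\pi$, attained by degree-$\pm 1$ maps. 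A one-line citation of this fact would complete the proof; without it, the equality $\mathcal{E}_{\mathbb{S}^2} = 4\pi$ is asserted rather than established.
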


\section{Construction of the target}\label{s:target}

We now construct the target manifold $\cN$ using a modification of Topping's construction in \cite{toppingwinding}. Recall that we will construct $\cN$ as a twisted product of the torus $\mathbb T^2$ with $\mathbb S^2$, i.e.
\[
\cN = \mathbb T^2 \times_f \mathbb S^2,
\]
where $f \in C^\infty(\bT^2)$ denotes the warping function.  We introduce coordinates $(w,z)$ on $\bT^2$ where $w,z\in [0,1] \cong \mathbb S^1$. From here on, we will refer to $\mathcal M$ when our theorems apply to arbitrary target manifolds and $\mathcal N$ when the target is the manifold constructed in this section. 

\medskip

We define the curve
\[
\gamma(s)= \left(\frac{1}{\pi}\cot^{-1}(s),\, s \mod 1\right), \quad s \in  (-\infty, \infty) ,
\]
where inverse cotangent is defined so that $\cot^{-1}(s): (-\infty, \infty) \rightarrow (0,\pi)$. Observe that $\cot^{-1}$ is $C^\infty$ with uniform control on all derivatives in any $[-K, K] \subset (-\infty, \infty)$. We want to define a metric on $\bT^2$ such that $\gamma$ is a geodesic. We claim that 
\[
h(w,z) = \begin{pmatrix} \pi^2 & \pi \sin(\pi w)^2\\\pi\sin(\pi w)^2 & 1+\sin(\pi w)^4\end{pmatrix}
\]
gives such a metric. To see that this is the case, and to simplify our analysis of the wave maps equation, it will be useful to make the following coordinate change: 

\begin{equation}\label{e:cotangentchange}
\Phi: (w,z) \mapsto (x,y)= (\cot(\pi w)-z,z),\qquad D\Phi = \begin{pmatrix} \frac{-\pi}{\sin^2(\pi w)} & -1\\0 & 1\end{pmatrix}
\end{equation}
We note that $D\Phi$ is well defined and invertible away from $w = 0 \equiv 1$, and we will show in Lemma \ref{l:imageofflow} that when $t < T_{\max}$ the flow stays away from this curve, hence the change of variables remains valid up until the first blow-up time. We further note that $\Phi$ respects the symmetry of $\bT^2$ given by $w\cong w+1$ and that the image of $\Phi$ will have the symmetry $(x,y)\cong (x+1, y-1)$, and thus will be a cylinder. We shouldn't expect the image of $\Phi$ to be a topological torus as $\Phi$ is not well defined at $w= 0, 1$.  

In the $(x,y)$ coordinates 
\[
\gamma(s) = (0, s), \quad s\in (-\infty, \infty)
\]
and the pushforward of the metric, $h$, is given by 
\[
\textbf{h}(x,y) = \begin{pmatrix} \frac{1}{(1+(x+y)^2)^2} & 0\\0 & 1\end{pmatrix}.
\]
 It is now straightforward to compute that $\gamma$ is a geodesic. As a check, we observe that $(x,y) \cong (x+1, y-1)$ is an isometry of $\mathrm{Im}\, \Phi$ equipped with the metric $\textbf{h}$.

\medskip
We now turn to the construction of the warping function $f(w,z)$. Here we face a technical difficulty not present in \cite{toppingwinding} which is our inability to constrain the flow using the maximum principle. Thus we need to carefully define the warping function on all of $\mathcal N$ and not just in a neighborhood of $w = 0$. We do this by means of a smooth cutoff function, the existence of which is guaranteed by the following lemma. 

\begin{lemma}
There exists a $C^\infty$ bump function $\chi: \bT^2 \rightarrow [0,1]$ with the following properties: 
\begin{equation}\label{e:chi}
\begin{cases} \chi \equiv 1 & w\in [0, 1/4]\cup [3/4, 1] \\
\chi \equiv 0 & w \in [7/16, 9/16]\\
\nabla \chi|_{\gamma} \parallel \dot{\gamma} & w\in (1/4, 3/4).
\end{cases}
\end{equation}
\end{lemma}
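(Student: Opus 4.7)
The plan is to first translate the tangency condition into a coordinate statement, and then construct $\chi$ by patching the naive cutoff $\eta(w)$ with a function that depends only on the arc-length parameter along $\gamma$ inside a thin tubular neighborhood.

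A direct computation using the explicit form of $h$ together with $\dot\gamma = (-\sin^2(\pi w)/\pi, 1)$ gives $h(\dot\gamma, v) = v^2$ for any $v = (v^1, v^2)$, so the $h$-orthogonal complement of $\dot\gamma$ along $\gamma$ is spanned by $\partial_w$. Hence the third condition in \eqref{e:chi} reduces to $\partial_w \chi \equiv 0$ along $\gamma_0 := \gamma \cap \{w \in [1/4, 3/4]\}$, which is the compact embedded smooth arc corresponding to $s \in [-1, 1]$ and bounded away from the excluded circle $\{w = 0\}$. In particular the unit $h$-normal to $\gamma_0$ is $\tfrac{1}{\pi}\partial_w$, so on any tubular neighborhood $U$ of $\gamma_0$ equipped with Fermi coordinates $(\rho, \tau)$ for $h$, every smooth $F$ on $U$ satisfies $\partial_w F|_{\gamma_0} = \pi\, \partial_\rho F|_{\gamma_0}$.

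I would then fix a smooth cutoff $\eta:[0,1] \to [0,1]$ satisfying the first two prescriptions of \eqref{e:chi}, define $F(\rho, \tau) := \eta(w(\gamma(\tau)))$ on $U$ (a function of $\tau$ only, so $\partial_\rho F \equiv 0$), and pick a smooth cutoff $\phi \in C^\infty(\bT^2; [0,1])$ equal to $1$ on a narrower sub-tube $V \Subset U$ and supported in $U$. The proposed cutoff is
\[
\chi(w,z) := \phi(w,z)\, F(w,z) + (1-\phi(w,z))\, \eta(w),
\]
with $F$ extended smoothly but arbitrarily outside $U$. This is a smooth convex combination of $[0,1]$-valued functions, hence lies in $[0,1]$. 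On $\gamma_0$ one has $\phi \equiv 1$, so $\chi = F$ there, and $\partial_w \chi|_{\gamma_0} = \pi\, \partial_\rho F|_{\gamma_0} = 0$, which is the desired tangency.

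The main technical point will be matching the flat regions of $\eta$ and $F$ inside $U$: a point of $U$ with $w \in [7/16, 9/16]$ may have its $\gamma$-foot of perpendicular at a parameter $\tau$ whose $w(\gamma(\tau))$ lies slightly outside $[7/16, 9/16]$, so $F$ need not vanish there, and similarly on the other flat region. I would resolve this by using in the definition of $F$ a second cutoff $\tilde\eta$ whose flat regions are widened by an amount comparable to the width of $U$; once $U$ is chosen thin enough relative to this widening, both $\phi F$ and $(1-\phi)\eta(w)$ take the prescribed constant values on $\{w \in [0,1/4] \cup [3/4,1]\}$ and on $\{w \in [7/16, 9/16]\}$, and $\chi$ satisfies all three conditions of \eqref{e:chi}.
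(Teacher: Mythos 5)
Your argument is correct and follows essentially the same strategy as the paper's (explicitly sketched) proof: introduce tubular/Fermi coordinates along $\gamma$ inside the strip $\{w \in (1/4,3/4)\}$, make the cutoff depend only on the tangential coordinate in a thin tube, and glue with a purely $w$-dependent cutoff outside. Your explicit reduction of the tangency condition to $\partial_w\chi|_\gamma = 0$ via the computation $h(\dot\gamma, v) = v^2$, together with the widening trick for matching the flat regions of $\eta$ and $F$ near the edges of the tube, supply exactly the details the paper's sketch leaves to the reader.
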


\begin{proof}

We provide only a sketch of this proof. To ease notation, let $X = \{(w,z) \mid w\in (1/4, 3/4)\} \subset \bT^2$. To define the cut-off function $\chi$, we smoothly connect a cut-off defined in a neighborhood of the geodesic $\gamma$, with another depending only on $w$ outside this neighborhood. The latter construction is simpler, so we only describe the former. Let $\gamma_\delta = \{p \in \bT^2\mid \mathrm{dist}(p, \gamma) < \delta\}$. Inside of $X$, $\gamma$ is parameterized by a $C^\infty$ function with uniform bounds on all the derivatives, therefore, there exists a $\delta_0 > 0$ such that we can smoothly parameterize $\gamma_{\delta_0}$ by $(t, n)$ where the $t$ coordinates are parallel to $\gamma$ and the $n$ coordinates are normal to $\gamma$. In $X \cap \gamma_{\delta_0/2}$, we define $\chi$ to depend only on $t$ (working in $t, n$ coordinates) and to $C^\infty$ interpolate between $0$ and $1$ so that the first two conditions of \eqref{e:chi} hold.
\end{proof}

With $\chi$ defined we can now define the metric twist $\tilde{f}$ in a neighborhood of $w = 0 \cong 1$:
\[
 \tilde{f}(w,z) \equiv e^{-2\pi\cot(\pi w)}\left(\sin 2\pi \bigl(\cot(\pi w)-z-1/8\bigr)+\sqrt{2}\right) + 1,
 \]
and then globally define the twist by
\begin{equation}\label{e:defoff}f(w,z) = \begin{cases}
1& \:\: w = 0 \\
\tilde{f}(w,z)& \:\: 0 < w \leq 1/4 \\
\chi(w,z)\tilde{f}(w,z)+ (1-\chi(w,z))M& \:\: 1/4 < w \leq 1/2 \\
\chi(w,z)\tilde{f}(1-w,1-z) + (1-\chi(w,z))M &\:\: 1/2 < w \leq 3/4\\
\tilde{f}(1-w, 1-z)&\:\: 3/2 < w < 1
\end{cases}\end{equation} 
where $M > 1$ is a constant to be chosen later. Note that $f$ is $C^\infty$ away from $w= 0,1$ (as it is the sum of products of $C^\infty$ functions). At $w = 0$ we observe that $\tilde{f} - 1$ vanishes to infinite order and similarly $\tilde{f}(1-w, 1-z)$ at $w = 1$. Thus $f\in C^\infty(\bT^2)$, and furthermore, $f$ is invariant under the isometries of the space, i.e. $(w,z) \cong (w+n, z+m)$ for $(n,m)\in \mathbb Z\times \mathbb Z$.

\medskip
In $(x,y)$ coordinates, we have
\[
\tilde{f}(x,y) = e^{-2\pi(x+y)}\left(\sin 2\pi\bigl(x-1/8\bigr)+\sqrt{2}\right) + 1.
\]

It is a straightforward calculation to see the following properties of $f$:
\begin{itemize}
\item[(i)] $\partial_x \tilde{f}(0,y) = 0$ 
\item[(ii)]  $\partial_y \tilde{f}(0, y) < 0$. 
\end{itemize}
Combined with what we know about $\chi$, this implies that $\nabla f|_\gamma$ is parallel to $\gamma$ in all of $\bT^2\backslash \{w= 0\}$, allowing for the fact that $\nabla f$ is zero away from the support of $\chi$.

\medskip
We can define $f$ globally in $(x,y)$ coordinates by

\begin{equation}\label{e:defoffiny}f(x,y) = \begin{cases}
\tilde{f}(-x, -y)&\:\: x+y < -\cot^{-1}(\pi/4)\\
\chi(x,y)\tilde{f}(-x,-y) + (1-\chi(x,y))M &\:\:  x+y \in [-\cot^{-1}(\pi/4) , 0) \\
\chi(x,y)\tilde{f}(x,y)+ (1-\chi(x,y))M& \:\: x+y  \in [0,  \cot^{-1}(\pi/4)) \\
\tilde{f}(x,y)& \:\: x+y  > \cot^{-1}(\pi/4) 
\end{cases}\end{equation} 
Furthermore, by picking $M > 2\sup_{\bT^2}\tilde{f}$  we can guarantee that 
\[
\mathrm{sgn}(y) \partial_y f(0,y) \leq 0\textup{ with equality if and only if }f \equiv M
\]
 (i.e. $y \notin \mathrm{supp} \chi$). This can be seen through a chain rule computation and the fact that  $\partial_y \tilde{f}(0, y) < 0$.

\medskip
We end this section by summarizing the properties of $f$ and $\mathcal N$ which are important to us. 
\begin{lemma}[Properties of the target Manifold]\label{l:propertiesofmanifold}
The function $f$, manifold $\mathcal N = \bT^2\times_f \bS^2$ and curve $\gamma$, described above, have the following properties:
\begin{enumerate}
\item \label{itmfat0} $f \in C^\infty(\bT^2)$ and $f \geq 1$ always with $f = 1$ iff $w = 0$.  
\item \label{gammageodesic} The curve $\gamma(s): (-\infty, \infty) \rightarrow \bT^2$ is a geodesic with the following properties: 
\begin{enumerate}
\item \label{itminfinitelength} For any $s\in \mathbb R$, $\ell(\gamma((-\infty,s)) = \infty = \ell(\gamma((s, \infty)))$.
\item \label{itmderivative} $\mathrm{sgn}(s)\frac{d}{ds}f(\gamma(s)) > 0$ except in a neighborhood of $0$, in which $f \equiv M \gg 1$
\item \label{itmfoutside1} $\{w = 0\} \subset \overline{\gamma((-\infty,\infty))}$ but $\{w = 0\} \cap \gamma(-\infty, \infty) = \emptyset$.
\item \label{itmparallelgradient} For any $s \in \bR$, $\nabla f(\gamma(s)) \parallel \gamma'(s)$. 
\end{enumerate}
\end{enumerate}
\end{lemma}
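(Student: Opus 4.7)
The proof is a direct verification from the explicit formulas \eqref{e:defoff} and \eqref{e:defoffiny}, combined with the properties of the cutoff $\chi$ in \eqref{e:chi} and the already-noted local identities $\partial_x \tilde{f}(0,y) = 0$ and $\partial_y \tilde{f}(0,y) < 0$. I would organize the argument around the coordinate change $\Phi:(w,z) \mapsto (x,y)$, since in $(x,y)$ coordinates the geodesic trivializes to $\gamma(s) = (0,s)$ and the pushforward metric $\mathbf{h} = \mathrm{diag}(1/(1+(x+y)^2)^2,\, 1)$ is diagonal, reducing most computations to one-variable statements.

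For (1), smoothness of $f$ away from $\{w = 0\} \cup \{w = 1\}$ is immediate since it is a finite combination of $C^\infty$ building blocks. Two points require care: (i) smoothness across the circles $\{w = 0,1\}$, which holds because the factor $e^{-2\pi \cot(\pi w)}$ vanishes to infinite order as $w \to 0^+$ (symmetrically at $w \to 1^-$ for $\tilde{f}(1-w,1-z)$), so $\tilde{f}-1$ together with all of its derivatives extends continuously by zero; and (ii) smoothness at the interfaces $w = 1/4$ and $w = 3/4$, which holds because $\chi$ equals $1$ on a neighborhood of these values by \eqref{e:chi} and hence is flat to infinite order there, so the piecewise expressions agree to all orders. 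The inequality $f \geq 1$ reduces to $\sin(\cdot) + \sqrt{2} \geq \sqrt{2} - 1 > 0$ inside $\tilde{f}$ combined with positivity of the exponential, and to a convex combination of quantities $\geq 1$ in the intermediate region; equality holds precisely when the exponential factor vanishes, i.e.\ when $w = 0$.

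The geodesic property of $\gamma$ in (2) was already noted in the construction via a direct Christoffel computation in $(x,y)$ coordinates. The infinite-length claim (2a) follows immediately because the arclength element along $\gamma$ is $\sqrt{\dot{\gamma}^T \mathbf{h} \dot{\gamma}}\, ds = ds$. For (2b), compute $\tfrac{d}{ds} f(\gamma(s)) = \partial_y f(0,s)$ directly from \eqref{e:defoffiny}: using $\partial_y \tilde{f}(0,y) = -\pi\sqrt{2}\, e^{-2\pi y}$ and the symmetric expression for $y < 0$, the sign claimed in (2b) holds outside the plateau where $f \equiv M$. For (2d), property (i) gives $\partial_x \tilde{f}(0,y) = 0$ in the regions where $f = \tilde{f}$; in the intermediate region, the assumption $\nabla \chi|_\gamma \parallel \dot{\gamma}$ from \eqref{e:chi} is exactly what is required so that $\nabla(\chi \tilde{f} + (1-\chi) M)|_\gamma$ remains parallel to $\dot{\gamma}$.

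Finally, for (2c), the non-intersection $\{w=0\} \cap \gamma(\bR) = \emptyset$ holds because $w(s) = \cot^{-1}(s)/\pi \in (0,1)$ for every finite $s \in \bR$. The closure containment follows because, as $s \to +\infty$, $w(s) \to 0$ while $z(s) = s \bmod 1$ can be forced arbitrarily close to any prescribed $z_0 \in [0,1)$ along the subsequence $s_n = n + z_0$, so that $\gamma(s_n) \to (0, z_0)$ and every point of $\{w=0\}$ arises as a limit point. The main technical delicacy throughout is ensuring the infinite-order smoothness of $f$ at $\{w = 0\}$ and at the gluing interfaces; once this compatibility is in hand, the remaining assertions reduce to elementary one-variable computations in the two coordinate systems.
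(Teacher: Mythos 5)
The paper gives no proof of Lemma~\ref{l:propertiesofmanifold}; it serves as a summary of facts established in the course of the construction in Section~\ref{s:target}, so your direct verification is the natural reading of what the paper leaves implicit. Most of your argument is sound: the infinite-order vanishing of $e^{-2\pi\cot(\pi w)}$ as $w\to 0^+$ and its reflected counterpart as $w\to 1^-$; the flatness of $\chi$ across the gluing circles $w=1/4$ and $w=3/4$; the arclength element $ds$ for (2a); the density of $\gamma(s_n)$ for $s_n = n + z_0$ for (2c); and the role of $\nabla\chi|_\gamma \parallel \dot\gamma$ in forcing $\partial_x f(0,y) = 0$ across the twisting region for (2d). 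You should also say a word about the interface $w=1/2$, where $\chi\equiv 0$ on the neighborhood $[7/16,9/16]$ so that both branches, with all their derivatives, agree with the constant $M$.

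The one real issue is in (2b). Your computation $\partial_y \tilde f(0,y) = -\pi\sqrt{2}\,e^{-2\pi y}$ is correct and consistent with property (ii) recorded in the construction, but combined with the reflected expression for $y<0$ it yields $\mathrm{sgn}(s)\,\frac{d}{ds}f(\gamma(s)) < 0$ away from the plateau, which is the \emph{opposite} of the inequality printed in item (2b). This sign agrees with the paper's own statement just before the lemma, namely $\mathrm{sgn}(y)\,\partial_y f(0,y)\leq 0$ with equality iff $f\equiv M$, and with the geometry: for $|s|$ beyond the twisting region, $f(\gamma(s)) = \tfrac{\sqrt 2}{2}e^{-2\pi|s|}+1\to 1$ as $|s|\to\infty$, while $f(\gamma(0))=M>1$, so $f$ decreases along $\gamma$ as $|s|$ grows --- exactly what drives the gradient flow outward toward $\{w=0\}$ in the goat-track picture of Remark~\ref{r:analyticity}. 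The printed sign in (2b) therefore appears to be a typo. Your proof should not claim that ``the sign claimed in (2b) holds'' without noticing the mismatch: the calculation establishes $\mathrm{sgn}(s)\,\frac{d}{ds}f(\gamma(s)) < 0$, and you should state this corrected inequality and flag the discrepancy with the printed statement rather than assert agreement with it.
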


Let us quickly comment on some of these conditions:

\begin{remark}\label{r:analyticity} 
It is not so important that $f, \mathcal N$ satisfy the conditions \ref{itmfat0}, \ref{itmfoutside1} precisely. The arguments here work for any $f$ which is globally bounded away from zero and which achieves its minimum on a topological circle that is in the closure of (but does not intersect) a geodesic $\gamma$. These facts will help us control the image of possible bubbles; cf. Lemma \ref{lem:bubbling_loc}. 

 Second, conditions \ref{itminfinitelength},\ref{itmderivative} \ref{itmparallelgradient} imply that the gradient flow generated by $f$ starting at a point along the curve $\gamma$ will be bounded but will not have a unique limit as $t\rightarrow \infty$. Rather, each point in $\{w = 0\}$ will be an accumulation point of the flow. As we mentioned in Section \ref{ss:lojasiewicz}, this property would not possible if $f$ were analytic. 
\end{remark}

\section{Analysis of the Hamiltonian Flow}\label{s:flow}
We now turn to the setting of wave maps into $\mathcal{N}$. Our first result of this section establishes that for wave maps with initial conditions of the form \eqref{e:initconditions}, the flow stays inside $\mathrm{Im} \gamma \times_f \mathbb S^2$ for all $t < T_{max}(u)$. Throughout this section, we will denote by $P_1$ and $P_2$ the projection of $\cN$ onto its two-dimensional components: 
\[
P_1 : \cN \to \bT^2, \quad P_2 : \cN \to \bS^2.
\]

\begin{lemma}\label{l:imageofflow}
Let $u: \mathbb R^{2+1} \rightarrow \cN$ be a wave map with initial conditions \eqref{e:initconditions}, which also satisfy $\|(u_0, u_1)\|_{\dot H^3\times \dot H^2} < \infty$. Then for all $t <  T_{max}(u)$, using the notation of \eqref{equ:sol_ansatz}, $X(t,\cdot) \equiv 0$, and
\begin{align}\label{equ:y_bds}
- \infty < Y(t,\cdot) < \infty.
\end{align}
\end{lemma}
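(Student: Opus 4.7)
The proof splits naturally into two parts of rather different flavors. For the first assertion, $X(t, \cdot) \equiv 0$, the plan is to recognize the submanifold $M := \{x = 0\} \times \mathbb{S}^2 = \gamma \times_{f|_\gamma} \mathbb{S}^2 \subset \mathcal{N}$ as totally geodesic and then to invoke the uniqueness part of Proposition \ref{p:existence} to force the solution $u$ to remain in $M$. The totally geodesic property is encoded in the design of $\mathcal{N}$: item \ref{gammageodesic} of Lemma \ref{l:propertiesofmanifold} says that $\gamma$ is a geodesic for $\mathbf{h}$, which in the $(x,y)$ coordinates of Section \ref{s:target} amounts to $\Gamma^x_{yy} \equiv 0$ along $\{x = 0\}$, while item \ref{itmparallelgradient} gives $\nabla f|_\gamma \parallel \gamma'$, equivalently $\partial_x f(0, y) \equiv 0$. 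Concretely, I would solve the wave map system restricted to $M$ with initial data $(y_0, y_1(r), \alpha_0(r), \alpha_1(r))$ to obtain $(\hat Y(t, r), \hat \alpha(t, r))$, form $\hat u(t, r, \theta) := (0, \hat Y, \hat \alpha, \theta)$, and verify directly that $\hat u$ solves the full \eqref{eq:wm} into $\mathcal{N}$: every term on the right-hand side of the $X$-equation vanishes by the two properties just cited (the $\mathbb{T}^2$ Christoffel contribution because $\Gamma^x_{yy}|_{x=0} = 0$ and $\partial X = 0$; the warping contribution because $\partial_x f(0, y) \equiv 0$). Uniqueness in Proposition \ref{p:existence} then gives $u = \hat u$, whence $X \equiv 0$.

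For the second assertion $-\infty < Y(t, \cdot) < \infty$, I need to show that the continuous lift of $Y$ to the universal cover of the $\mathbb{T}^2$ factor stays bounded on $[0, T] \times [0, \infty)$ for each $T < T_{\max}$, i.e.\ that $u$ does not wind around the $y$-direction of the torus unboundedly many times in finite time. Here the improved regularity assumption $(u_0, u_1) \in \dot H^3 \times \dot H^2$, propagated via Proposition \ref{p:existence}, plays the central role: it yields uniform $\dot H^3$ control of $u(t, \cdot)$ on $[0, T]$ and, via Sobolev embedding ($\dot H^3 \hookrightarrow C^{1,\alpha}$ in dimension two), pointwise $C^{1, \alpha}$ regularity of $u$ in space-time. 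Combined with the initial profile $Y(0, \cdot) \equiv y_0$, this provides a continuous lift which is automatically bounded on every $[0, T] \times \overline{B_R(0)}$. To handle $r \to \infty$, I would set $\tilde Y := Y - y_0$ and use a local energy estimate together with finite speed of propagation: the forcing in the wave equation for $\tilde Y$ comes from the $f$-warping coupling to $\alpha$ and is uniformly bounded by the above embedding, while $\tilde Y(0, \cdot) \equiv 0$ and $\partial_t \tilde Y(0, \cdot) = y_1 \in \dot H^2 \hookrightarrow L^\infty$; the source moreover decays in $r$ because the total energy is finite and $\alpha_0(r) \to n\pi$.

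The principal obstacle is the second part. Part (i) is a clean geometric invariance argument enabled by the careful design of $\mathcal{N}$ in Section \ref{s:target} and requires no more than finite energy and smoothness. Ruling out unbounded winding of $Y$, by contrast, is not intrinsic to $\mathcal{N}$: the target is compact, so $u$ itself is trivially bounded, but a wave map winding around the torus unboundedly many times still lifts to an unbounded $Y$. Precluding this requires the quantitative regularity flagged in Remark \ref{r:remarksonmain}, and indeed Lemma \ref{l:imageofflow} is precisely the place where the otherwise unnecessary $\dot H^3 \times \dot H^2$ assumption is consumed.
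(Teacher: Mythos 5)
Your proposal takes essentially the same approach as the paper. Part (i), the totally geodesic argument for $X\equiv 0$, is the paper's argument written slightly more explicitly (construct the restricted solution, verify it solves \eqref{eq:wm}, invoke uniqueness). Part (ii) differs in one small technical choice: you consume the $\dot H^3$ control via the endpoint Sobolev embedding $\dot H^1\cap\dot H^3\hookrightarrow C^{1,\alpha}$ to get a pointwise bound on $|DY|$, whereas the paper uses a softer Gagliardo--Nirenberg interpolation $\|DY\|_{L^p(r\,dr)}\lesssim \|DY\|_{L^2}^{1-\theta}\|D^3Y\|_{L^2}^{\theta}$ for some $p>2$, followed by a weighted H\"older inequality; both turn the extra regularity into an integrable bound on $\int_0^{r_1}|\partial_r Y|\,dr$ near the origin, which is the only genuine danger point. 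Your separate $r\to\infty$ treatment via finite speed of propagation is unnecessary: the lemma asserts only pointwise finiteness of $Y(t,r)$ for each fixed $r$, and the crude energy bound $\int_{r_1}^{r_2}|\partial_r Y|\,dr\lesssim \mathcal{E}^{1/2}\bigl(\log(r_2/r_1)\bigr)^{1/2}$ already rules out $Y$ passing through $\pm\infty$ at any finite $r>0$; the paper handles $r>0$ exactly this way and only treats $r=0$ separately. One place you should be slightly more careful is in writing that the $C^{1,\alpha}$ control ``automatically'' bounds the lift on $[0,T]\times\overline{B_R}$: the lift is through the chart $\Phi$, which degenerates as $w\to 0$, so the translation of the ambient gradient bound $|\nabla_g u|\le C$ into $|\partial_r Y|\le C$ uses that the $y$-metric on $\gamma$ is Euclidean, and closing the argument needs a short continuity/open-closed step (the set where $P_1 u\in\gamma$ is open, anchored at $t=0$, and the gradient bound keeps $Y$ in a compact interval up to its putative exit time). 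The paper is similarly terse here, so this is a shared elision rather than a gap in your proposal.
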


\begin{proof}
We prove that as long as the initial conditions lay in  $\gamma \times T\gamma$ we have $X(t,r) \equiv 0$. This follows from the fact that $\gamma \times \mathbb S^2$ lies totally geodesically within $\cN$ (cf. \cite{toppingwinding} Section 3), and then the classical fact that wave maps with initial conditions in totally geodesic submanifolds stay in that submanifold. 

Let us briefly sketch how this works: $\gamma$ is a geodesic in $\bT^2$ which implies that the image of $(\Delta_{\bT^2})|_\gamma$ is contained in $T\gamma \subset T\bT^2$. The first component of the wave map satisfies the equation 
\[
\partial^2_{tt} P_1\circ u = -\Delta_{\bT^2} P_1\circ u - \nabla f(P_1\circ u) e(\alpha).
\]
Since $\nabla f(P_1\circ u)$ and $\Delta_{\bT^2}P_1\circ u$ lie tangent to $\gamma$, the flow stays in $\gamma$ for the whole time of existence.

Finally, to obtain boundedness of $Y$, we note that by assumption $(0,y_0) \in \gamma$, so, to fix notation, suppose that $s_0 \in \mathbb{R}$ is such that $\gamma(s_0) = (0,y_0)$. Note that 
\[
\ell\bigl(\gamma\bigl((s_0, \infty)\bigr)\bigr) = \infty = \ell\bigl(\gamma\bigl((-\infty, s_0)\bigr)\bigr), \qquad  \forall s_0 \in (-\infty, \infty),
\]
so we will conclude by establishing that the image of a finite energy wave map with $\dot H^{3} \times \dot H^2$ bounds cannot contain an infinite length path, which will establish that $|Y(\cdot, t)| < \infty$ for all $t < T_{\max}$ proving \eqref{equ:y_bds}. This is where we rely on the persistence of regularity result from Proposition \ref{p:existence}. We note again that $\dot H^{1+} \times \dot H^{0+}$ bounds would suffice for this argument.

For any $r_0 > 0$ and $r_1, r_2 > r_0$, letting $b = (0, Y(t, r_2))$ and $a = (0, Y(t, r_1))$ and abusing notation so that $(a,b)$ refers to the portion of $\gamma$ connecting these two points, we have
\[
\ell(a,b)  \leq \int_{r_1}^{r_2} |DY | dr \leq \mathcal{E}(u) \frac{1}{r_0},
\]
Hence for any given time, the only point in the domain at which $Y(t, r)$ can be infinite is $r = 0$. However, for $p > 2$  and for any $r_1 \leq 1$, letting $c =Y(t, r_1)$ and $d = Y(t, 0)$, we further have that 
\begin{align*}
\ell(c,d) \leq  \left( \int_{0}^{r_1} |DY |^{p} r dr \right)^{1/p} & \leq \left( \int_{0}^{\infty} |DY |^{2} r dr \right)^{1-\theta} \left( \int_{0}^{\infty} |D^3Y |^{2} r dr \right)^{\theta}\\
& \leq C\bigl(t, \|(u_0, u_1)\|_{\dot H^3 \times \dot H^2}, \mathcal{E}(u_0, u_1)\bigr),
\end{align*}
where the first inequality is an application of H\"older and the second inequality holds for some $0< \theta \equiv \frac{1-\frac{2}{p}}{2} < 1$ by Gagliardo-Nirenberg. Hence $Y(t, 0)$ cannot pass through $\{\pm \infty\}$.
\end{proof}

\subsection{Energy Concentration at $T_{\max} = \infty$}\label{ss:scattering}
In this subsection we assume that $T_{\max} = +\infty$. We want to show that scattering (i.e. outcome (b) in Theorem \ref{thm:st_dich}) cannot occur unless the wave map is degree zero. 

 We begin by rewriting the system of equations for $Y$ and $\alpha$, which, in light of Lemma \ref{l:imageofflow}, is given by 
\begin{equation}\label{equ:wm_sys}
\begin{split}
\frac{\partial^2 Y}{\partial t^2} &= \frac{\partial^2 Y}{\partial r^2} + \frac{1}{r} \frac{\partial Y}{\partial r} - \frac{\partial f}{\partial y}(0, Y) e(\alpha), \,\,\,\,\qquad\qquad\qquad\qquad\qquad (Y, Y_t)\big|_{t=0} = (y_0, y_1), \\
\frac{\partial^2 \alpha}{\partial t^2} &= \frac{\partial^2 \alpha}{\partial r^2} + \frac{1}{r} \frac{\partial \alpha}{\partial r} -\frac{\sin(2\alpha)}{2 r^2} + \frac{1}{f(0,Y)} \frac{\partial f}{\partial y}(0, Y) \frac{\partial Y}{\partial r} \frac{\partial \alpha}{\partial r} , \qquad (\alpha, \alpha_t)\big|_{t=0} = (\alpha_0, \alpha_1).
\end{split}
\end{equation}

We plan on showing that energy concentrates inside the light cone $|x| < t$ as $t\rightarrow \infty$ (i.e. that outcome (a) of Theorem \ref{thm:st_dich} holds). We start by observing that the norms of derivatives of quasi-equivariant functions have radial symmetry:

\begin{proposition}\label{p:radialderivative}
Let $u: \mathbb R^{1+2} \rightarrow (\mathcal M,g)$ be a quasi-equivariant wave map, then $|\nabla_x u|_g, |\nabla_t u|_g$ and $\left\langle \nabla_x u, \nabla_t u\right\rangle_g$ are all radially symmetric functions. 
\end{proposition}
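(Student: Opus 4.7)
The plan is to exploit the quasi-equivariance identity
\[
u(r, \theta + s, t) = \Phi_s \circ u(r, \theta, t)
\]
together with the fact that $d\Phi_s$ is a fiberwise linear isometry of $(T\mathcal{M}, g)$. Differentiating this identity in each of $r$, $\theta$, and $t$ via the chain rule immediately yields
\[
(\partial_\alpha u)(r, \theta + s, t) \;=\; d\Phi_s\big|_{u(r,\theta,t)} \bigl( \partial_\alpha u(r, \theta, t) \bigr) \qquad \text{for } \alpha \in \{r, \theta, t\}.
\]

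Because $\Phi_s \in \mathrm{Isom}(\mathcal{M})$, its differential preserves the metric at every point, so for any choice of $\alpha, \beta \in \{r, \theta, t\}$ we obtain
\[
g\bigl((\partial_\alpha u)(r, \theta + s, t),\, (\partial_\beta u)(r, \theta + s, t)\bigr) \;=\; g\bigl((\partial_\alpha u)(r, \theta, t),\, (\partial_\beta u)(r, \theta, t)\bigr).
\]
Setting $\theta = 0$ and letting $s$ range over $\mathbb S^1$, we see that every scalar metric contraction of the form $g(\partial_\alpha u, \partial_\beta u)$ is independent of the angular coordinate. This is the key mechanism behind the proposition.

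To conclude, I would unpack each of the three quantities in polar coordinates and reduce to the scalar contractions above. Concretely,
\[
|\nabla_x u|_g^2 = g(\partial_r u, \partial_r u) + \frac{1}{r^2}\, g(\partial_\theta u, \partial_\theta u), \qquad |\nabla_t u|_g^2 = g(\partial_t u, \partial_t u),
\]
and $\langle \nabla_x u, \nabla_t u\rangle_g$ decomposes into a radial part $g(\partial_r u, \partial_t u)$ and an angular part $r^{-1} g(\partial_\theta u, \partial_t u)$. By the previous step, each of these scalar contractions is $\theta$-independent, so each quantity is radially symmetric.

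There is essentially no serious obstacle here; the argument is a one-line chain rule combined with the isometry property. The only mild care needed is in interpreting $\langle \nabla_x u, \nabla_t u\rangle_g$, which is naturally a one-form on $\mathbb R^2$: one checks rotational invariance by confirming that its radial and angular components in polar coordinates are each built from metric contractions $g(\partial_\alpha u, \partial_t u)$ that have already been shown to be angle-independent.
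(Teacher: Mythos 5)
Your proof is correct and takes essentially the same route as the paper's: differentiate the quasi-equivariance identity to get $\partial_\alpha u(r,\theta+s,t) = D\Phi_s\,\partial_\alpha u(r,\theta,t)$, then use that $D\Phi_s$ is a pointwise linear isometry to conclude that all scalar contractions $g(\partial_\alpha u, \partial_\beta u)$ are $\theta$-independent. The only difference is cosmetic: the paper writes the argument directly for $\nabla_x u$ and $\nabla_t u$ without unpacking the polar decomposition of $\nabla_x u$ into $\partial_r u$ and $r^{-1}\partial_\theta u$, whereas you spell that out and observe that the Frobenius norm and the mixed pairing reassemble from the angle-independent scalars $g(\partial_\alpha u, \partial_\beta u)$ — a small but genuine improvement in precision, especially for interpreting $\langle \nabla_x u, \nabla_t u\rangle_g$, which the paper treats somewhat loosely.
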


\begin{proof}
Recall that $u(r, \theta+s,t) = \Phi_s \circ u(r, \theta, t)$ for $\Phi_s \in \mathrm{Isom}(\mathcal M)$. In particular this implies that $D\Phi_s$ satisfies $$\left\langle D\Phi_s v, D\Phi_s w\right\rangle_{g(\Phi_s(p))} = \left\langle v, w\right\rangle_{g(p)}, \qquad \forall p \in \mathcal M, v,w \in T_p \mathcal M.$$ We can then compute,
\begin{align*} 
\left\langle \nabla_x u(r,\theta+s, t), \nabla_t u(r, \theta+s, t)\right\rangle_{g} =& \left\langle D\Phi_s\nabla_x u(r, \theta, t), D\Phi_s \nabla_t u(r, \theta, t)\right\rangle_g \\
=& \left\langle \nabla_x u(r, \theta, t), \nabla_t u(r, \theta, t)\right\rangle_g.
\end{align*}
The same argument applies for $|\nabla_x u|_g, |\nabla_t u|_g$. 
\end{proof}

We also record a standard H\"older regularity estimate for quasi-equivariant wave maps. Note that we abuse notation and use $|\cdot|$ to denote the distance within the manifold.

\begin{lemma}\label{lem:holder}
Let $u:\mathbb R^{1+2}\rightarrow \mathcal M$ be a quasi-equivariant finite energy wave map into a smooth manifolds $\mathcal M$ and $\{\Phi_s\}_{s\in \mathbb S^1}$ the associated smoothly parameterized one parameter family of isometries (see Definition \ref{d:quasiequivariant}). Then, for any $r_0 > 0$ there exists
$$
C \equiv C(\mathcal{E}(u_0, u_1), r_0, \sup_{s \in \bS^1}\|\partial_s \Phi_s\|_{C^\infty(T\mathcal M)}) > 0
$$
such that for any $t \in \mathbb{R}, \theta_1, \theta_2 \in \bS^1$ and any $r,s \in \mathbb{R}$ with $r_0 < r,s$ we have
\[
|u(t, r, \theta_1) - u(t, s, \theta_2)| \leq C \bigl( |r-s|^{1/2} + |\theta_1-\theta_2| \bigr).
\]
\end{lemma}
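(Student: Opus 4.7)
The plan is to decompose the estimate via the triangle inequality
\[
|u(t,r,\theta_1) - u(t,s,\theta_2)| \leq |u(t,r,\theta_1) - u(t,s,\theta_1)| + |u(t,s,\theta_1) - u(t,s,\theta_2)|,
\]
and bound the two terms separately, using the quasi-equivariance for the angular term and the finite energy bound together with radial symmetry of $|\nabla u|_g$ for the radial term.

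For the angular term, I would exploit quasi-equivariance directly. Writing $\sigma = \theta_2 - \theta_1$, we have $u(t,s,\theta_2) = \Phi_\sigma \circ u(t,s,\theta_1)$. Since $\Phi_0 = \mathrm{Id}_{\mathcal M}$, the curve $\tau \mapsto \Phi_\tau(p)$ with $p = u(t,s,\theta_1)$ joins $p$ to $\Phi_\sigma(p)$, so
\[
|u(t,s,\theta_1) - u(t,s,\theta_2)| \leq \int_0^{|\sigma|} |\partial_\tau \Phi_\tau(p)|_g \, d\tau \leq \sup_{\tau \in \bS^1}\|\partial_\tau \Phi_\tau\|_{C^0(T\mathcal M)} \, |\theta_1 - \theta_2|,
\]
which gives the Lipschitz bound in the angular variable with a constant depending only on $\sup_s \|\partial_s \Phi_s\|_{C^\infty(T\mathcal N)}$.

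For the radial term, I would first invoke Proposition \ref{p:radialderivative}: quasi-equivariance implies that $|\nabla_x u|_g$ (and in particular $|\partial_r u|_g$, which is dominated by $|\nabla_x u|_g$) is a radially symmetric function of $(r,\theta)$. Without loss of generality assume $r \leq s$. Then, integrating along a radial ray,
\[
|u(t,r,\theta_1) - u(t,s,\theta_1)| \leq \int_r^s |\partial_\rho u(t,\rho,\theta_1)|_g \, d\rho \leq \left(\int_r^s |\partial_\rho u(t,\rho,\theta_1)|_g^2 \, d\rho\right)^{1/2} |r-s|^{1/2}
\]
by Cauchy--Schwarz. Using the radial symmetry and the assumption $\rho \geq r_0$,
\[
\int_r^s |\partial_\rho u|_g^2 \, d\rho \leq \frac{1}{r_0} \int_{r_0}^\infty |\partial_\rho u|_g^2 \, \rho \, d\rho \leq \frac{1}{2\pi r_0}\int_{\bR^2} |\nabla u|_g^2 \, dx \leq \frac{\mathcal E(u_0,u_1)}{2\pi r_0},
\]
which gives the Hölder-$1/2$ bound in the radial variable with a constant depending on $\mathcal E(u_0,u_1)$ and $r_0$. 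Combining the two bounds yields the claimed estimate.

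I do not anticipate a substantive obstacle here: the main point is to correctly exploit the radial symmetry of $|\nabla u|_g$ from Proposition \ref{p:radialderivative} so that Cauchy--Schwarz and the finite energy inequality together produce the $|r-s|^{1/2}$ bound, and to note that the cutoff $r,s \geq r_0$ is exactly what prevents the degenerate weight $\rho^{-1}$ from blowing up. The smoothness hypothesis on the one-parameter family $\Phi_s$ makes the angular estimate immediate.
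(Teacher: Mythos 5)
Your proposal is correct and follows essentially the same route as the paper: split by the triangle inequality, get the Lipschitz angular bound from quasi-equivariance and the smoothness of $\Phi_s$, and get the $\frac{1}{2}$-H\"older radial bound from Cauchy--Schwarz against the finite energy with the weight $\rho \geq r_0$. The only cosmetic difference is that the paper obtains the radial bound by averaging $|u(t,r,\theta)-u(t,s,\theta)|$ over $\theta$ (using its $\theta$-independence), whereas you fix $\theta_1$ and invoke the radial symmetry of $|\nabla_x u|_g$ from Proposition \ref{p:radialderivative}; both variants work.
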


\begin{proof}
First, fix $r > 0$. For any $\phi, \theta \in \bS^1, t\in \bR$ we have 
\begin{align}\label{holder1}
|u(t, r,\theta) - u(t, r, \phi)| = |u(t,r, \theta) - \mathrm{\Phi}_{\phi-\theta}(u(t, r, \theta))| < \|\mathrm{Id}-\mathrm{\Phi}_{\phi-\theta}\|_{C^\infty(\mathcal M)} < C|\phi-\theta|.
\end{align}
 Note, a similar argument shows that for $r,s, t > 0$ $|u(t,r,\theta) - u(t,s,\theta)|$ is independent of $\theta$.

Fix $\theta\in \bS^1$, $s > r>  r_0 > 0$. Then, by H\"older's inequality,
\begin{align}\label{holder2}
|u(t,r,\theta) - u(t,s,\theta)|^2 &= \frac{1}{2\pi}\int_{0}^{2\pi}\left( \int_{r}^s \partial_r u(t, r',\theta) dr' \right)^2d\theta \\
&\leq \left(\frac{1}{2\pi}\int_0^{2\pi} \int_{r}^s |\partial_r u(t, r',\theta)|^2 r' dr'd\theta \right) \frac{|r-s|}{r_0} .
\end{align}
Putting \eqref{holder1} and \eqref{holder2} together, the conclusion follows. 
\end{proof}

For $0 < T < \infty$ and define, for any $A \geq 0$ \begin{equation}\label{d:flux}
\mathrm{Flux}(u, T, A) := \int_{r = T-A} |\nabla_x u(r, \theta, T) + \nabla_t u(r,\theta,T)|_g^2 d\sigma(\theta). 
\end{equation}

As suggested by its name, the flux measures the energy entering the (translated inwards by $A$) light cone at time $T$. We can see this in the following energy identity:

\begin{equation}\label{e:fluxidentity}
\int_{|x| < T_1-A} |\nabla_{x,t}u(x,T_1)|_g^2 dx - \int_{|x| < T_2-A}|\nabla_{x,t} u(x,T_2)|_g^2 dx = \int_{T_2}^{T_1} \mathrm{Flux}(u, t, A)dt.
\end{equation}

It follows from \eqref{e:fluxidentity} that  
\[
T\mapsto \int_{|x| < T-A} |\nabla_{x,t}u(x,T)|_g^2 dx
\] 
is monotone increasing for any $A \geq 0$. Since the integral is monotone increasing in $T$ and bounded by $E(u_0, u_1)$, it follows that 
\[
\lim_{T\rightarrow \infty} \int_{r < T-A} |\nabla_{x,t}u(x,T)|_g^2 dx
\]
exists and, therefore, 
\begin{equation}\label{e:fluxlimit}
\lim_{T\uparrow \infty} \mathrm{Flux}(u,T, A) = 0.
\end{equation}

Our  next proposition shows that the energy in a linear neighborhood of the boundary of the light cone still goes to zero in infinite time; this observation was first made in the setting of radial wave maps, c.f. \cite{CTZ93},  see also Proposition 2.1 in \cite{ckls2} and Lemma 4.1 in \cite{StruweLecNotes}.

\begin{proposition}\label{p:channelsgotozero}
Let $u$ be finite energy quasi-equivariant wave map, with $T_{\max} = +\infty$. Then for any $\lambda \in (0,1)$ we have \begin{equation}\label{e:channelszero}
 \int_{\lambda T < |x| < T-A} |\nabla_x u(x,T)|_g^2 + |\nabla_t u(x,T)|^2_g dx \to 0 \qquad \textup{as $T, A \to \infty$ for  $A \leq (1-\lambda)T$}.
\end{equation}
\end{proposition}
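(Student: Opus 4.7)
The strategy is to adapt the argument of \cite{CTZ93, ckls2, StruweLecNotes} to the quasi-equivariant setting, combining the flux identity \eqref{e:fluxidentity} with a divergence-theorem argument on a region bounded by two light cones.

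\textbf{Reduction to a one-parameter limit.} For $A \geq A_0$ satisfying $A \leq (1-\lambda)T$ we have $T - A \leq T - A_0$, so
\[
\{\lambda T \leq |x| \leq T - A\} \subset \{\lambda T \leq |x| \leq T - A_0\}.
\]
Hence it suffices to prove, for each fixed $A_0 \geq 0$, that
\[
\lim_{T \to \infty} \int_{\lambda T \leq |x| \leq T - A_0} |\nabla_{x,t} u(T,x)|_g^2\, dx = 0.
\]
Let $e$ denote the energy density and set $E_A(T) = \int_{|x|\leq T - A} e(T, x)\, dx$ and $E(T, R) = \int_{|x|\leq R} e(T,x)\, dx$. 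By \eqref{e:fluxidentity}, $T \mapsto E_{A_0}(T)$ is monotone increasing and bounded by $\mathcal E(u_0, u_1)$, so it converges to some $E_{A_0}^\infty$, and $\int_{t_0}^\infty \mathrm{Flux}(u, t, A_0)\, dt < \infty$. The annular integral equals $E_{A_0}(T) - E(T, \lambda T)$, so the task reduces to showing $E(T, \lambda T) \to E_{A_0}^\infty$ as $T \to \infty$. The upper bound $\limsup_T E(T, \lambda T) \leq E_{A_0}^\infty$ is immediate from $\{|x|\leq \lambda T\} \subset \{|x| \leq T - A_0\}$ for $T \geq A_0/(1-\lambda)$.

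\textbf{Divergence theorem on a two-cone region.} For the matching lower bound I apply the divergence theorem with multiplier $\partial_t$ to the wave-map stress--energy tensor on the spacetime region
\[
\mathcal R(t_1, T) = \{(t, x) : t_1 \leq t \leq T,\ \lambda t \leq |x| \leq t - A_0\},
\]
producing the identity
\[
\int_{\lambda T \leq |x| \leq T - A_0} e(T)\, dx = \int_{\lambda t_1 \leq |x| \leq t_1 - A_0} e(t_1)\, dx + F_{\mathrm{out}}(t_1, T) + F_{\mathrm{in}}(t_1, T),
\]
where $F_{\mathrm{out}}$ and $F_{\mathrm{in}}$ are the fluxes through the outer null mantle $\{|x| = t - A_0\}$ and the inner timelike mantle $\{|x| = \lambda t\}$, respectively. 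The outer flux density is the nonnegative expression $\tfrac12|(\partial_t + \partial_r)u|_g^2 + \tfrac{1}{2r^2}|\partial_\theta u|_g^2$, so $|F_{\mathrm{out}}(t_1, T)|$ is bounded by $\int_{t_1}^T \mathrm{Flux}(u, t, A_0)\, dt$ and vanishes as $t_1 \to \infty$ uniformly in $T \geq t_1$.

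\textbf{Main obstacle: the inner mantle.} The crux of the argument is handling $F_{\mathrm{in}}$. Using the radial symmetry of $|\nabla_{t,x} u|_g^2$ from Proposition \ref{p:radialderivative}, one finds (after completing squares) that the inner flux density has the form
\[
\tfrac{1+\lambda}{4}|(\partial_t + \partial_r)u|_g^2 \;-\; \tfrac{1-\lambda}{4}|(\partial_t - \partial_r)u|_g^2 \;+\; \tfrac{\lambda}{2r^2}|\partial_\theta u|_g^2,
\]
which carries an indefinite contribution from the ingoing null derivative $(\partial_t - \partial_r)u$. The outgoing piece is integrable in $t$ by the flux argument applied with $A = 0$. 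To control the ingoing piece I plan to combine this with a Morawetz-type identity (using a radial multiplier and the quasi-equivariance to reduce to a scalar radial calculation), which provides the additional integrability of $|(\partial_t - \partial_r)u|_g^2$ on the timelike cone $|x| = \lambda t$. Lemma \ref{lem:holder}, together with Proposition \ref{p:radialderivative}, then upgrades sequential decay in $T$ to the full limit statement. Sending $t_1 \to \infty$ first and then $T \to \infty$ yields $\liminf_T E(T, \lambda T) \geq E_{A_0}^\infty$, matching the upper bound and completing the proof.
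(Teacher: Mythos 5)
Your approach shares the paper's basic strategy (a divergence-theorem argument on a spacetime region, with the flux identity \eqref{e:fluxidentity} controlling one boundary), but the choice of region is where your argument stalls, and the way you propose to repair it is a genuine gap rather than a sketch of a complete proof.

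You take the inner boundary of your region $\mathcal R(t_1,T)$ to be the \emph{timelike} cone $\{|x| = \lambda t\}$. As you correctly observe, the resulting flux density on that mantle contains the term $-\tfrac{1-\lambda}{4}|(\partial_t - \partial_r)u|_g^2$, which has the wrong sign and is not controlled by the exterior-cone flux. At this point you invoke a ``Morawetz-type identity'' to supply integrability of $|(\partial_t-\partial_r)u|_g^2$ along the timelike cone, but this is asserted, not proved, and it is precisely the step that does not obviously hold: Morawetz/virial estimates for wave maps into a general compact Riemannian target do not come for free (they require sign conditions that the geometry of $\mathcal N$ need not satisfy), and the radial/quasi-equivariant reduction you cite does not by itself produce the claimed space-time integrability on a timelike surface. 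The paper's proof circumvents exactly this obstacle by working in null coordinates $(\xi,\eta)=(t-r,t+r)$ and choosing the region $Q$ to be a quadrilateral bounded by the spacelike slice $\{t=T\}$ and \emph{three null lines} ($\xi=A$, $\xi=(1-\lambda)T$, and $\eta=\mathrm{const}$). Because all slanted boundaries are null, the boundary contributions involve only the nonnegative densities $\mathcal A^2 = r(e+m)$ and $\mathcal B^2 = r(e-m)$, so every flux term has a definite sign. The paper then closes the argument with the monotonicity of $\mathcal E_0(\cdot)$, the resulting vanishing of $\mathcal F(a)$ as $a\to\infty$, and an appeal to \cite[Lemma 1]{CTZ93} for the remaining term $IV$ (valid here because Proposition \ref{p:radialderivative} makes the relevant densities radial). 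Your write-up contains no analogue of this monotonicity/null-geometry mechanism, and the final sentence (``Lemma \ref{lem:holder} upgrades sequential decay to the full limit statement'') is also left unexplained and does not match how the paper uses that lemma. To salvage your approach you would either need to actually prove the Morawetz-type estimate you invoke for quasi-equivariant wave maps into $\mathcal N$, or — more simply — replace the inner timelike mantle by the null cone $\{t-r = (1-\lambda)T\}$, at which point you would essentially recover the paper's quadrilateral $Q$.
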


\begin{proof} 
We follow the notation from \cite{StruweLecNotes}, see also \cite{shatahstruwe}. Let 
\[
e := \frac{1}{2}\left(|\nabla_x u|_g^2 + |\nabla_t u|_g^2\right), \:\: m := \left\langle \nabla_x u, \nabla_t u\right\rangle, \:\: L := \frac{1}{2}\left(|\nabla_x u|_g^2 - |\nabla_t u|_g^2\right).
\]

From these we have the algebraic relations, \begin{equation}\label{e:algrel}\begin{aligned} \partial_t(re) - \partial_r(rm) =& 0\\
\partial_t(rm) - \partial_r(re) =& L.
\end{aligned}
\end{equation}

Note that these identities follow from the wave map equation \eqref{eq:wm} and Proposition \ref{p:radialderivative}, which, in this context, implies that $\partial_re = \partial_xe$ and similarly with $m$. 

It will be convenient to reparameterize $(t,r)$ space by the coordinates $\xi = t-r$ and $\eta = t+r$. We also introduce the quantities $\mathcal A^2 = r(e+m)$ and $\mathcal B^2 = r(e-m)$. Then $\partial_\xi \mathcal A^2 = L = -\partial_\eta\mathcal B^2$. From this equation and the algebraic observation that $8r^2(e^2 -m^2) \geq L^2$, we see that \begin{equation}\label{e:aandb}\begin{aligned}|\partial_\xi \mathcal A^2| \leq& \frac{C}{r}\mathcal B\\
|\partial_\eta \mathcal B^2|\leq& \frac{C}{r}\mathcal A.
\end{aligned}\end{equation}

Let $Q$ denote the quadrilateral in $(\xi, \eta)$ space with vertices 
\[
((1-\lambda)T,(1+\lambda)T), \quad (A, 2T-A), \quad (A, 2s-(1-\lambda)T), \quad((1-\lambda)T, 2s-(1-\lambda)T)
\]
where $s \gg T > 0$, see Figure \ref{f:quad} (the ordering above is the order of the vertices in the Figure, starting from the lower left and moving counter-clockwise).
\begin{figure}
\begin{center}\includegraphics[width=.35\textwidth]{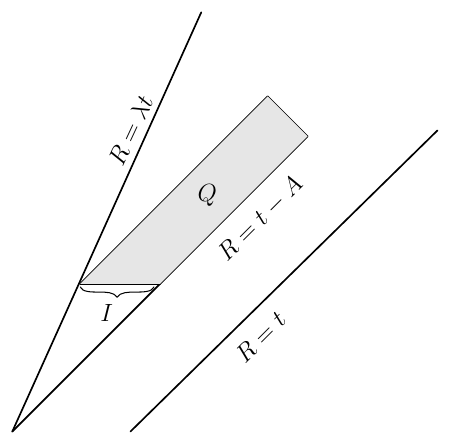}\end{center}%
\caption{The quadrilateral $Q$}\label{f:quad}
\end{figure}

By \eqref{e:algrel} the vector $(re, -rm)$ is divergence free, so we can conclude \begin{equation}\label{e:div}\begin{aligned} 0=&\int_{\partial Q} (re, -rm)\cdot \hat{n} \\
=& \underbrace{-\int_{\lambda T}^{T-A} erdr}_{I} - \underbrace{\int_{2T-A}^{2s-(1-\lambda)T} \mathcal A^2(A, \eta')d\eta'}_{II}\\
+& \underbrace{\int_A^{(1-\lambda)T} \mathcal B^2(\xi',2s- (1-\lambda)T)d\xi'}_{III} - \underbrace{\int_{(1+\lambda)T}^{2s-(1-\lambda)T} \mathcal A^2((1-\lambda)T, \eta')d\eta'}_{IV}.\end{aligned}\end{equation}
The terms $I-IV$ correspond to integrating along the sides of the quadrilateral $Q$ in Figure \ref{f:quad}, beginning at $I$ and moving counter-clockwise.

\medskip
In \eqref{e:div}, integral $I$ is exactly the one we want to show goes to zero as $T, A$ go to infinity, and hence we have
\[
|I| \leq |II| + |III| + |IV|.
\]
We will handle each term on the right separately.  Note that 
\[
II \leq \int_{T}^\infty \mathrm{Flux}(u, t, A)dt,
\]
and hence letting $T\uparrow \infty$, \eqref{e:fluxidentity} implies that $II$ goes to zero. 

For the next terms, we introduce some notation: let 
\[
\mathcal E_{\lambda}(\xi) = \int_{\frac{1+\lambda}{1-\lambda}\xi}^\infty \mathcal A^2(\xi, \eta')d\eta'.
\]
\begin{figure}
\begin{center}\includegraphics[width=.35\textwidth]{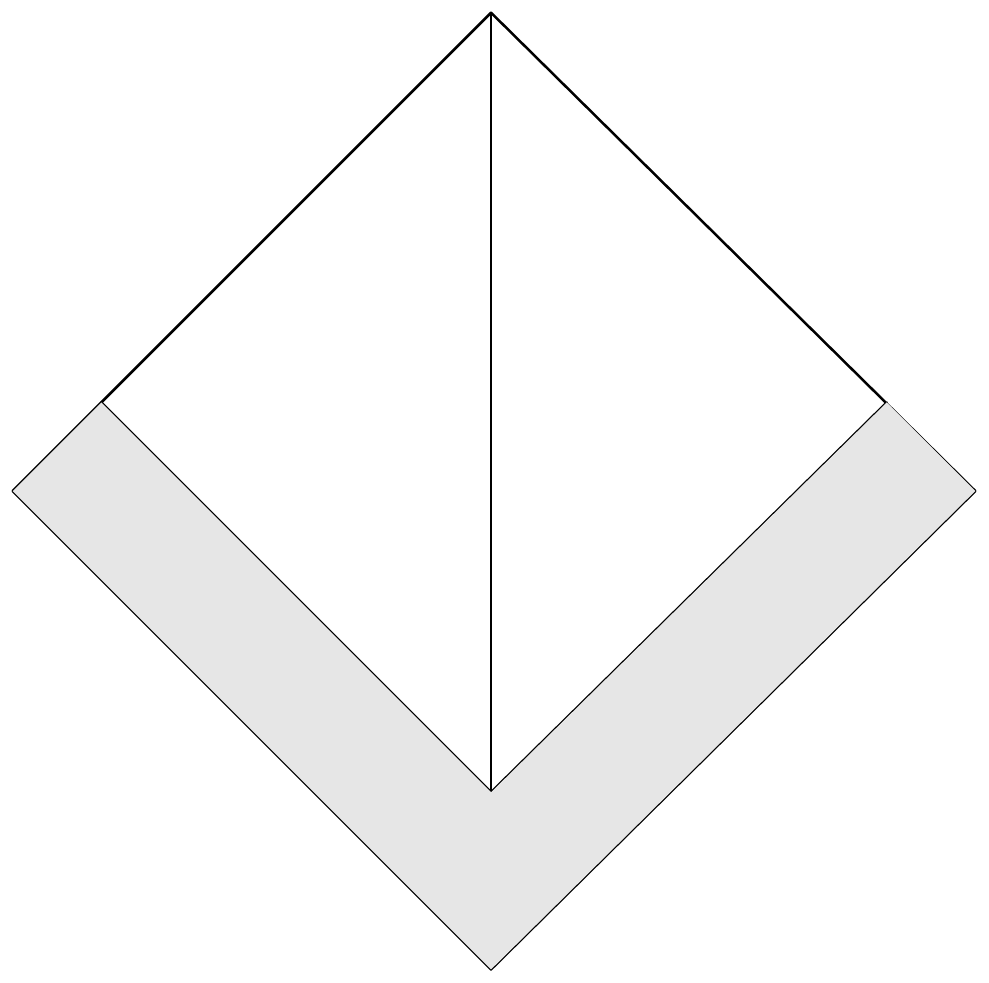}\end{center}%
\caption{The shaded region is created by taking a lightcone with vertex $(0, a)$, removing the lightcone with vertex $(0,b)$ and intersecting that region with the backwards lightcone with vertex $(0,\eta_0)$}\label{f:twolightcones}
\end{figure}

Now we handle term III. Integrating between two light cones, i.e. the shaded region in Figure \ref{f:twolightcones}, we see that 
\[
\int_{a}^b \mathcal B^2(\xi', \eta_0)d\xi' = \int_a^{\eta_0} \mathcal A^2(a, \eta')d\eta' - \int_{b}^{\eta_0} \mathcal A^2(b, \eta')d\eta'.
\]
Define 
 \[
 \mathcal F(a,b) =\lim_{\eta \rightarrow \infty}\int_a^b \mathcal B^2(\xi', \eta)d\xi',
 \]
and note that the limit in this definition exists since  
\begin{align}\label{fid}
\mathcal F(a,b) = \mathcal E_0(a) - \mathcal E_0(b).
\end{align}
The identity \eqref{fid} also implies that $\mathcal E_0(-)$ is a decreasing function since $\mathcal F(a,b) \geq 0$. Therefore, since $\mathcal F(a,b) \leq \mathcal E_0(a) < \mathcal{E}(u_0, u_1)$, we can define 
\[
\mathcal F(a) \equiv \lim_{b\rightarrow \infty} \mathcal F(a,b) .
\]
Now, noting that
\[
III \leq \mathcal F(A),
\]
we need to establish the limit of the right-hand side is zero. But since we know that $\mathcal E_0(-)$ is decreasing and non-negative, $\lim_{b\rightarrow \infty} \mathcal E_0(b)$ exists, and, consequently, $\lim_{a\rightarrow \infty} \mathcal F(a) =0$, which concludes the proof for term $III$.

 Finally, it is clear that $IV \leq \mathcal E_\lambda((1-\lambda)T)$, and hence we are done if we can show that 
 \begin{equation}\label{e:decayofenergy}
 \lim_{T\uparrow \infty} \mathcal E_{\lambda}((1-\lambda)T) = 0.
 \end{equation}
 Here we argue exactly as in \cite[Lemma 1]{CTZ93}. While that lemma is written in the setting of radially symmetric wave maps (with arbitrary target), we note that all the relevant quantities in our setting are radial by Proposition \ref{p:radialderivative}. Therefore, \eqref{e:decayofenergy} holds and the proof is complete. 
\end{proof}

We can then quickly conclude that the kinetic energy of a quasi-equivariant wave map vanishes inside of the light cone (this corresponds to Corollary 2.2 in \cite{ckls2}):

\begin{corollary}\label{c:kineticdispersion}
Let $u$ be a finite energy quasi-equivariant wave map with $T_{\max} = \infty$. Then \begin{equation}\label{e:kineticdispersion} \lim_{A \rightarrow \infty}\limsup_{T\rightarrow \infty} \frac{1}{T} \int_A^T \int_0^{t-A} |\partial_t u|^2(r,t)rdrdt = 0.\end{equation}
\end{corollary}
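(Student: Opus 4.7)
The plan is to combine the flux decay of Proposition \ref{p:channelsgotozero} with a dilation-type multiplier identity for the wave map stress-energy tensor $T_{\alpha\beta}$, following the strategy of \cite{CTZ93}. As a preliminary reduction, note that by Proposition \ref{p:radialderivative} the relevant components of $T_{\alpha\beta}$ (namely $|\partial_t u|_g^2$, $|\nabla_x u|_g^2$, and $\langle \nabla_x u, \partial_t u\rangle_g$, together with any radial combination thereof) are radial in $(x,t)$. Consequently, the entire argument reduces to a $(1+1)$-dimensional computation in $(t,r)$ with the area measure $r\,dr\,dt$.

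Next, I would contract $T_{\alpha\beta}$ with the dilation vector field $K = t\partial_t + r\partial_r$ and apply the divergence theorem on the inner truncated cone $\Omega_{A,T} := \{(t,r) : A \le t \le T,\ 0 \le r \le t-A\}$, which has apex $(A,0)$. Since $K$ is conformal on $\mathbb{R}^{1+2}$ with deformation tensor a multiple of $\eta$, the bulk divergence reduces to the trace $T^\alpha_\alpha$, a specific linear combination of $|u_t|^2$, $|\nabla_x u|^2$, and the warping potential $f(X,Y)\,e(\alpha)$. The boundary of $\Omega_{A,T}$ decomposes into: the top slice $\{t=T, 0\le r \le T-A\}$; the lateral null wall $\{A\le t\le T,\ r = t-A\}$; and a piece along the axis of symmetry $r=0$ which contributes nothing by the $r\,dr$ weight. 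On the top slice, $T_{\alpha\beta}K^{\beta}n^{\alpha}$ is bounded pointwise by $T$ times the energy density, so its contribution is $O(T\cdot \mathcal{E}(u_0,u_1))$. On the lateral wall the integrand reduces to a non-negative multiple of $t\,|u_t + u_r|^2$, whence the lateral contribution is bounded by $\int_A^T t\cdot \mathrm{Flux}(u,t,A)\,dt$; this is $o(T)$ as $A,T\to\infty$ by combining the monotonicity \eqref{e:fluxidentity} of the inner-cone energy with the pointwise flux decay \eqref{e:fluxlimit}.

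To isolate $|u_t|^2$ from the bulk trace, I would combine the dilation identity with the ordinary energy identity (obtained from the multiplier $\partial_t u$), so that after dividing by $T$ the left-hand side becomes a positive multiple of $\tfrac{1}{T}\int_A^T \!\int_0^{t-A} |u_t|^2 \,r\,dr\,dt$, up to errors controlled by the warping potential (which is bounded by a multiple of the energy density, hence controlled analogously). Dividing by $T$, the lateral contribution vanishes as $A,T\to\infty$; the real difficulty is the top-slice contribution, which is \emph{a priori} only $O(\mathcal{E})$ after dividing by $T$. This term is handled by rewriting it in terms of the inner-cone energy at time $T$ and invoking Proposition \ref{p:channelsgotozero}, which forces the energy outside any region $\{r < \lambda T\}$ to vanish, together with the monotone convergence $\int_{|x|<T-A}\!e\,dx \to E_\infty$. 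Asymptotically, energy cannot persistently concentrate in one of the kinetic, gradient, or potential components inside the light cone, and this forces a form of equipartition that causes the top-slice term to cancel against the bulk in the limit.

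The main obstacle will be executing this final cancellation carefully, in particular handling the warping potential $f(X,Y)\,e(\alpha)$, which does not appear in the classical setting of \cite{CTZ93}. Here I would exploit the lower bound $f\ge 1$ from Lemma \ref{l:propertiesofmanifold} and the fact that $P_1\circ u$ stays trapped on the geodesic $\gamma$ by Lemma \ref{l:imageofflow}, so that the potential contribution is pointwise comparable to the spherical energy density $e(\alpha)$ and can be absorbed. Once all boundary terms are under control, first sending $T\to\infty$ then $A\to\infty$ yields \eqref{e:kineticdispersion}.
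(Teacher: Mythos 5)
Your overall strategy — a multiplier identity on the inner truncated cone combined with the flux decay of \eqref{e:fluxlimit} and the channel decay of Proposition \ref{p:channelsgotozero} — is the right family of ideas, and the reduction to $(1+1)$ dimensions via Proposition \ref{p:radialderivative} is correct. However, your choice of vector field is the wrong one, and the fix you propose does not close the gap. For wave maps from $\mathbb{R}^{1+2}$, the dilation field $K = t\partial_t + r\partial_r$ is conformal with $\nabla_{(\alpha}K_{\beta)} = \eta_{\alpha\beta}$, so the bulk contraction is the trace $T^\alpha_\alpha = -L = \tfrac12\bigl(|\partial_t u|_g^2 - |\nabla_x u|_g^2\bigr)$, a \emph{sign-indefinite} quantity, not $|\partial_t u|_g^2$. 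You say you will ``combine with the ordinary energy identity (multiplier $\partial_t$)'' to isolate $|\partial_t u|_g^2$, but $\partial_t$ is Killing, so its multiplier identity has vanishing bulk; adding a multiple of it to the $K$-identity changes boundary terms but leaves the bulk $-L$ untouched. What you actually need is the multiplier $t\partial_t$ (bulk $=-e$), so that $K - t\partial_t = r\partial_r$ yields bulk $-L -(-e) = e - L = |\partial_t u|_g^2$. This $r\partial_r$ identity, written out in the radial variables of \eqref{e:algrel}, is precisely $\partial_t(r^2 m) = \partial_r(r^2 e) - r|\partial_t u|_g^2$, which is what the paper records and then cites \cite[Corollary 2.2]{ckls2} for; with $K$ alone you are stuck with $-L$ and an uncancelled $T\!\int er\,dr$ on the top slice.

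Two further issues. First, your handling of the top-slice contribution relies on a heuristic (``equipartition causes the top-slice term to cancel against the bulk'') rather than an estimate. With the $r\partial_r$ multiplier the top slice is $\int_0^{T-A} r^2 m(T,r)\,dr$, and $\tfrac1T$ of this is controlled by splitting $\{r < \lambda T\}$ (bounded by $\lambda\,\mathcal{E}$ since $|m|\le e$) from $\{\lambda T < r < T-A\}$ (goes to zero by Proposition \ref{p:channelsgotozero}); no equipartition principle is required, and one would also need a real argument — not an appeal to equipartition — to show that the $O(T\mathcal{E})$ top-slice term in the $K$-approach actually cancels against $\int\!\!\int L$. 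Second, your concern about a separate ``warping potential $f(X,Y)e(\alpha)$'' is a red herring: that term is already part of $|\nabla_{t,x} u|_g^2$ (the energy density $e$ in \eqref{e:algrel} is computed with the target metric $g$), the abstract stress-energy tensor $T_{\alpha\beta}$ is still divergence-free, and no separate absorption argument is needed. Your treatment of the lateral wall (bounding by $\int_A^T t\,\mathrm{Flux}\,dt$ and showing this is $o(T)$) is correct in spirit, though the $o(T)$ claim itself requires a split of the $t$-interval and the Cauchy property of $\int_A^\infty\mathrm{Flux}\,dt$, which you should spell out.
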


\begin{proof}
From \eqref{e:algrel}, it follows that 
\[
\partial_t(r^2m) = \partial_r(r^2e) - r|\partial_t u|^2_g.
\]
The proof then follows as in \cite[Corollary 2.2]{ckls2}.
\end{proof}

Now we narrow our focus to solutions to the system \eqref{equ:wm_sys}. We aim to prove the following: 

 \begin{proposition}\label{p:no_scatt}
For wave maps, $u$, given by \eqref{equ:wm_sys}, scattering cannot occur when $\alpha$ is not of degree zero. In particular, if $T_{\max}(u) = + \infty$ then there exists $t_n \uparrow \infty$ and $\lambda(t_n) \ll t_n$ such that $u(t_n + \lambda_nt, \lambda(t_n)r)\rightarrow \omega$ in $H^1_{\mathrm{loc}}$, where $\omega:\bR^{2} \rightarrow \mathcal N$ is a non-trivial harmonic map. 
\end{proposition}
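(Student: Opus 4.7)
The plan is to establish case (a) of the Sterbenz-Tataru dichotomy (Theorem \ref{thm:st_dich}) directly; since cases (a) and (b) are mutually exclusive, producing a non-trivial bubble rules out scattering. The three ingredients are the channel energy estimate (Proposition \ref{p:channelsgotozero}), the kinetic energy dispersion (Corollary \ref{c:kineticdispersion}), and a topological lower bound on the spherical part of the energy coming from the non-zero degree of $\alpha$.

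First, I record a topological lower bound. Since $\alpha(t,0) = 0$ and $\alpha(t,\infty) = n\pi$ with $n \neq 0$, the inequality $2ab \leq a^2 + b^2$ together with the fact that $\alpha(t,\cdot)$ must traverse each interval $[k\pi, (k+1)\pi]$ for $0 \leq k \leq |n|-1$ at least once yields
\begin{equation*}
\int_0^\infty \left[ (\partial_r \alpha)^2 + \frac{\sin^2\alpha}{r^2} \right] r \, dr \;\geq\; 2\int_0^\infty |\partial_r \alpha||\sin\alpha|\, dr \;\geq\; 2 \sum_{k=0}^{|n|-1} \int_{k\pi}^{(k+1)\pi}|\sin\beta|\, d\beta \;=\; 4|n|.
\end{equation*}
The corresponding lower bound on $\int_0^R$ holds modulo $o(1)$ whenever $\alpha(t,R)$ is close enough to $n\pi$.

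Next, fix $\lambda \in (0,1)$ small and $A$ large. Corollary \ref{c:kineticdispersion} furnishes a sequence $t_n \uparrow \infty$ along which $\int_0^{t_n - A}|\partial_t u(t_n,r)|_g^2\, r\, dr \to 0$, while Proposition \ref{p:channelsgotozero} forces the energy in $\{\lambda t_n < r < t_n - A\}$ to vanish. Combined with the degree lower bound, at least $4|n| - o(1)$ of the spherical energy must live in $\{r \leq \lambda t_n\}$ at time $t_n$, and essentially all of it is potential rather than kinetic. A Struwe-type concentration-compactness rescaling argument (in the spirit of \cite{Struwe85, SacksUhlenbeck}) then selects scales $\lambda_n \leq \lambda t_n$ so that the rescaled maps $u_n(r,\theta) := u(t_n, \lambda_n r, \theta)$ capture a definite positive fraction of the spherical energy on $B_1$ but less than the threshold $\bar\varepsilon$ from Theorem \ref{t:hmregularity}. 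By the energy bound the $u_n$ are uniformly bounded in $H^1_{loc}$, and after rescaling the interior kinetic energy remains negligible on each compact set. Hence along a subsequence $u_n \to \omega$ in $H^1_{loc}$, and the vanishing of the rescaled kinetic terms implies $\omega$ weakly satisfies the harmonic map equation; Section \ref{ss:weakharmonic} upgrades it to a smooth harmonic map, and the scale selection together with the degree lower bound guarantees $\mathcal{E}(\omega) > 0$. Moreover $\lambda_n \ll t_n$, since otherwise the spherical energy captured inside $B_{\lambda_n}$ together with the vanishing annular contribution would exceed the conserved total $\mathcal{E}(u_0, u_1)$, a contradiction.

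The main obstacle is the scale-selection step: ensuring that the weak limit $\omega$ is genuinely non-trivial and that convergence is actually strong in $H^1_{loc}$ (no further concentration at smaller scales after rescaling) requires a careful Struwe-style bubble-extraction procedure. The H\"older regularity of Lemma \ref{lem:holder} should rule out degenerate concentration scenarios, and the quantitative lower bound $4|n|$ from the degree argument ensures that at each $t_n$ there is a definite, scale-invariant amount of spherical energy available to form a bubble.
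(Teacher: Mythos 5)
Your overall strategy is the same as the paper's: use Proposition \ref{p:channelsgotozero} and Corollary \ref{c:kineticdispersion} to push energy into the interior of the light cone and kill the kinetic part along a sequence of times, use the non-zero degree to secure a definite amount of spherical energy, and then rescale to extract a harmonic bubble. However, there are two genuine problems with the writeup.

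First, the scale-selection step — which you yourself flag as ``the main obstacle'' — is precisely the content of the paper's argument and cannot be deferred to an unspecified ``Struwe-style bubble-extraction procedure.'' The paper handles it with an explicit, concrete choice: $\lambda(t)$ is defined by the pinching condition \eqref{e:lambdaenergy}, $1 \leq 2\pi\int_0^{2\lambda(t)} e(r,t)\,r\,dr \leq 2$, which exists by continuity in $r$ because the nonzero degree forces the total spherical energy above this threshold. The lower bound in \eqref{e:lambdaenergy} is exactly what makes the limit $\omega$ non-trivial; the upper bound is what confines $u_n(B_1)$ to a single coordinate chart so that Struwe's argument (as in \cite{struweequivariant}) gives strong $H^1_{\mathrm{loc}}$ convergence near the origin. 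Away from the origin, strong convergence comes from the uniform H\"older bound of Lemma \ref{lem:holder}, not from a generic concentration-compactness claim. Without making the scale selection concrete, your proof does not actually establish that $\omega$ is non-trivial, nor that the convergence is strong.

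Second, your justification that $\lambda_n \ll t_n$ is wrong. You argue that otherwise ``the spherical energy captured inside $B_{\lambda_n}$ together with the vanishing annular contribution would exceed the conserved total $\mathcal{E}(u_0, u_1)$,'' but no such overshoot can occur: the energy inside $B_{\lambda_n}$ is always bounded by the conserved total, so there is simply nothing to contradict. The correct argument, which is the paper's Lemma \ref{l:constructionofscales}, goes the other way: if $\lambda(t_n) \gtrsim t_n$ along a subsequence, then by Corollary \ref{c:constantoutsideofcone} one has $\alpha(\lambda t_n, t_n) \to n\pi$, and the pointwise inequality $e(\alpha) \geq |\partial_r\alpha|\,|\sin\alpha|/r$ integrated out to radius $\lambda t_n$ yields a lower bound converging to $\int_0^{n\pi}|\sin\rho|\,d\rho = 2|n| \geq 2$ for the spherical energy on $B_{2\lambda(t_n)}$; this contradicts the \emph{upper} bound in the definition \eqref{e:lambdaenergy} of $\lambda(t)$. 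Your degree computation in the first display is essentially the same calculation, but you deploy it at the wrong point in the argument; it is needed to contradict the scale-normalization constraint, not the energy conservation law.
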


 Our arguments will follow very closely those in \cite[Sections 2 and 3]{ckls2}, where an analogous proposition is proven for equivariant wave maps into spheres, see \cite[Theorem 3.2]{ckls2}. The $\alpha$ component of our wave map is not equivariant wave map itself (it satisfies a different equation), however, we will be able to use, essentially unchanged, any arguments in \cite{ckls2} which are purely energy theoretic, since $E(\alpha) \leq E(u)$.  

The first step is to show that the $\alpha$ component converges to a constant when $r \geq \lambda t$. Here we can argue exactly as in \cite{ckls2} with our Proposition \ref{p:channelsgotozero} and Corollary \ref{c:kineticdispersion} taking the place of \cite[Proposition 2.1 and Corollary 2.2]{ckls2}. 

\begin{corollary}[Cf. \protect{\cite[Corollary 2.3]{ckls2}}]\label{c:constantoutsideofcone}
Let $\lambda > 0$ and $u = (Y,\alpha)$ be a finite energy wave map which solves the system \eqref{equ:wm_sys}. Then $\alpha(\infty,t) := \lim_{r\rightarrow \infty} \alpha(r, t)$ exists and, furthermore, \[
\lim_{t\uparrow \infty} \|\alpha(r,t) - \alpha(\infty,t)\|_{L^\infty(r \geq \lambda t)} = 0.
\]
\end{corollary}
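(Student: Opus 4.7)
\emph{Proof proposal.} The existence of the limit $\alpha(\infty,t) := \lim_{r\to\infty}\alpha(r,t)$ was already observed in Section~\ref{s:wavemapprelim}: finiteness of $\int_0^\infty f(X,Y)\,e(\alpha)\,r\,dr$, combined with $f\geq 1$ (Lemma~\ref{l:propertiesofmanifold}) and the fact that $X\equiv 0$ (Lemma~\ref{l:imageofflow}), forces $\alpha(\infty,t) = n\pi$ where $n\in\bZ$ is the time-independent degree of the wave map. The content of the corollary is therefore the \emph{uniform} convergence claim, and the plan is to transcribe the argument of \cite[Corollary~2.3]{ckls2}, with Proposition~\ref{p:channelsgotozero} (together with the flux relations \eqref{e:fluxidentity}--\eqref{e:fluxlimit}) playing the role of their analogous inputs.

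The basic tool is a Bogomol'nyi-type inequality. Since $\tfrac{d}{dr}\cos\alpha = -\sin\alpha\cdot\partial_r\alpha$ and $2|\sin\alpha\cdot\partial_r\alpha|\leq \sin^2\alpha/r + r(\partial_r\alpha)^2$ by AM--GM, integration over $(a,b)$ gives
\[
|\cos\alpha(a,t)-\cos\alpha(b,t)| \leq \tfrac12\int_a^b\!\Bigl[(\partial_r\alpha)^2+\tfrac{\sin^2\alpha}{r^2}\Bigr] r\,dr \leq \int_a^b e(\alpha)\,r\,dr.
\]
Letting $b\to\infty$ and using $\cos\alpha(\infty,t)=(-1)^n$, one obtains
\[
|\cos\alpha(a,t)-(-1)^n| \leq \int_a^\infty e(\alpha)(r,t)\,r\,dr \qquad \text{for every } a>0.
\]
The task then reduces to showing that this right-hand side tends to zero uniformly in $a\geq\lambda t$ as $t\to\infty$. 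Decomposing the domain of integration as $[\lambda t,\,t-A]\cup[t-A,\,t+A]\cup[t+A,\infty)$, the first region is handled directly by Proposition~\ref{p:channelsgotozero} (after choosing $A=A(t)\to\infty$ slowly enough that $A\leq(1-\lambda)t$); the second, an $O(A)$-thick collar of the light cone, is controlled via the flux identity \eqref{e:fluxidentity} combined with the vanishing of the flux \eqref{e:fluxlimit}; and the third is controlled by monotonicity, since the interior energy $\int_{|x|\leq t}|\nabla_{x,t}u|^2\,dx$ is nondecreasing and bounded above by $\mathcal E(u_0,u_1)$, so its complement converges, and the integrability of the flux (itself a consequence of \eqref{e:fluxidentity}) forces the exterior energy to zero.

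Finally, to upgrade smallness of $|\cos\alpha(a,t)-(-1)^n|$ to smallness of $|\alpha(a,t)-n\pi|$, I would invoke the H\"older continuity of Lemma~\ref{lem:holder}. Since $\alpha(r,t)\to n\pi$ as $r\to\infty$ and a jump by $\pi$ on any interval where the spherical energy is small is excluded (such a jump would force $|\sin\alpha|\gtrsim 1$ on a subinterval of definite length, contradicting the Bogomol'nyi bound just established), the correct branch of $\arccos$ is selected throughout $r\geq\lambda t$. The main technical point I anticipate is coordinating the three subregions above under a single choice of $A=A(t)$ so that all three contributions become uniformly small; this is precisely the radial wave-map-type bookkeeping carried out in \cite{CTZ93, StruweLecNotes, ckls2}. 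A minor but worth-noting subtlety is that the $\alpha$-equation in \eqref{equ:wm_sys} contains a coupling term involving $Y$, but this plays no role in the Bogomol'nyi calculation, which depends only on conservation of the full energy \eqref{e:energy} and not on any conservation law for $\alpha$ alone.
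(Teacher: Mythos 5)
The paper does not actually spell out a proof of this corollary; it simply states that ``we can argue exactly as in \cite{ckls2}'' once Proposition~\ref{p:channelsgotozero} and Corollary~\ref{c:kineticdispersion} are substituted for their equivariant counterparts. Your outline captures the right ingredients---a Bogomol'nyi-type inequality plus a three-region decomposition---but as written the argument has a genuine gap in the collar region $[t-A,\,t+A]$, and I believe the way you set up the pointwise estimate is responsible for it.

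The problem is the AM--GM step
\[
|\sin\alpha\,\partial_r\alpha|\;\le\;\tfrac12\Bigl[\tfrac{\sin^2\alpha}{r}+r\,(\partial_r\alpha)^2\Bigr],
\]
which loses a factor of $r$ whenever the two terms on the right are unbalanced. This is precisely what happens for the outgoing radiative part of a finite-energy wave map, which lives in the collar $r\approx t$ and carries a spatial energy $\int[(\partial_r\alpha)^2+\sin^2\alpha/r^2]\,r\,dr$ of order one to infinity. Thus $\int_{\lambda t}^\infty e(\alpha)\,r\,dr$ does \emph{not} in general tend to zero: once passed through AM--GM your right-hand side converges to the radiated energy, not to $0$, and the desired conclusion does not follow. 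Nor does the flux control the collar energy as you assert: the vanishing of $\mathrm{Flux}(u,T,\pm A)$ only shows that the interior energies $\int_0^{T\mp A}e\,r\,dr$ converge; the collar $\int_{T-A}^{T+A}$ converges to the difference of two such limits, which is exactly the escaping radiation and may be positive. The claim that integrability of the flux ``forces the exterior energy to zero'' for $r>t+A$ is also not right as stated; the correct observation there is outgoing-cone monotonicity, $\int_{r>t+A}e(u)(\cdot,t)\,r\,dr\le\int_{r>A}e(u_0)\,r\,dr\to0$ as $A\to\infty$.

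The fix is to \emph{not} pass through AM--GM on the collar: the Bogomol'nyi integrand $|\sin\alpha\,\partial_r\alpha|$ itself decays there, and this is the extra factor of $1/r\approx 1/t$ that AM--GM discards. Concretely, work with $G(\alpha):=\int_0^\alpha|\sin\rho|\,d\rho$ (a bijection, which also eliminates the branch-selection issue you flag for $\cos$), so that
\[
|G(\alpha(a,T))-G(n\pi)|\;\le\;\int_a^\infty|\sin\alpha||\partial_r\alpha|\,dr,
\]
decompose as you propose, and use your AM--GM bound together with Proposition~\ref{p:channelsgotozero} on $[\lambda T,T-A]$, your AM--GM bound together with exterior monotonicity on $[T+A,\infty)$, but Cauchy--Schwarz with the trivial bound $|\sin\alpha|\le1$ on the collar:
\[
\int_{T-A}^{T+A}|\sin\alpha||\partial_r\alpha|\,dr\;\le\;\Bigl(\int_{T-A}^{T+A}\!dr\Bigr)^{1/2}\Bigl(\int_{T-A}^{T+A}(\partial_r\alpha)^2\,dr\Bigr)^{1/2}\;\le\;\sqrt{2A}\,\sqrt{\frac{\mathcal E(u_0,u_1)}{T-A}},
\]
which tends to zero once $A=A(T)\to\infty$ with $A=o(T)$. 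Your remaining steps (the choice of $A(T)$ compatible with $A\le(1-\lambda)T$, and the H\"older continuity input) are fine.
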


From Corollary \ref{c:constantoutsideofcone} we can construct our sequence $\lambda(t_n)$:

\begin{lemma}\label{l:constructionofscales}
Let $u = (Y,\alpha)$ be a finite energy wave map which solves \eqref{equ:wm_sys}. Further assume that $\alpha$ is not of zero degree. For each $t > T_0$, let $\lambda(t)$ be such that \begin{equation}\label{e:lambdaenergy} 1 \leq 2\pi\int_{0}^{2\lambda(t)} e(r,t) rdr \leq 2,\end{equation} then $\lambda(t) \ll t$. 
\end{lemma}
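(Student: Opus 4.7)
The plan is to argue by contradiction: assume there exists a sequence $t_n \uparrow \infty$ and a constant $c > 0$ such that $\lambda(t_n) \geq c t_n$ for all $n$. We will derive a contradiction by showing that the nontrivial degree of $\alpha$ forces at least roughly $4\pi$ of energy on $[0, c t_n]$, whereas the definition of $\lambda(t_n)$ caps the same energy at $2$.

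First, applying Corollary \ref{c:constantoutsideofcone} with parameter $\lambda = c$ yields
\[
\sup_{r \geq c t_n} |\alpha(r, t_n) - \alpha(\infty, t_n)| \to 0 \quad \text{as } n \to \infty.
\]
Letting $k \neq 0$ denote the degree of $\alpha$ (constant in $t$ by continuity of the flow), we have $\alpha(\infty, t) \equiv k\pi$, and thus $\alpha(c t_n, t_n) \to k\pi$ while $\alpha(0, t_n) = 0$ for every $n$.

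Next, we extract a positive lower bound on the energy of $\alpha$ on $[0, c t_n]$ via a Bogomolny-type inequality. The AM-GM inequality applied to the spherical energy density gives
\[
e(\alpha)(r,t) \, r \geq \tfrac{1}{2}\left( \alpha_r^2(r,t) \, r + \tfrac{\sin^2 \alpha(r,t)}{r} \right) \geq |\alpha_r(r,t) \sin \alpha(r,t)|,
\]
so that $\int_0^{c t_n} e(\alpha)(r, t_n) \, r \, dr$ is bounded below by the total variation of $r \mapsto \cos \alpha(r, t_n)$ on $[0, c t_n]$. Continuity of $\alpha$ in $r$, together with the intermediate value theorem, forces $\alpha(\cdot, t_n)$ to pass through $m\pi$ for each integer $m$ strictly between $0$ and $k$ (each such passage contributes $2$ to the total variation of $\cos \alpha$), and by the previous paragraph $\alpha(c t_n, t_n)$ lies within $o(1)$ of $k\pi$; so the total variation is at least $2|k| - o(1) \geq 2 - o(1)$. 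Since Lemma \ref{l:imageofflow} gives $X \equiv 0$ and Lemma \ref{l:propertiesofmanifold} gives $f \geq 1$, we have the pointwise bound $e \geq f(0, Y) \, e(\alpha) \geq e(\alpha)$, so multiplying by $2\pi$ yields
\[
2\pi \int_0^{c t_n} e(r, t_n) \, r \, dr \geq 4\pi - o(1).
\]

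Finally, since $c t_n \leq \lambda(t_n) \leq 2\lambda(t_n)$ by the standing assumption, the defining inequality \eqref{e:lambdaenergy} gives the opposing upper bound
\[
2\pi \int_0^{c t_n} e(r, t_n) \, r \, dr \leq 2\pi \int_0^{2\lambda(t_n)} e(r, t_n) \, r \, dr \leq 2,
\]
which combined with the lower bound produces $4\pi - o(1) \leq 2$, a contradiction for $n$ sufficiently large. The main subtlety lies in verifying that the Bogomolny-type lower bound remains valid given that $\alpha(c t_n, t_n)$ is only close to (rather than exactly equal to) $k\pi$; this is handled by the uniform convergence from Corollary \ref{c:constantoutsideofcone}, which furnishes the stated $o(1)$ correction.
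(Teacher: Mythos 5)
Your proof is correct and takes essentially the same approach as the paper: argue by contradiction, apply Corollary~\ref{c:constantoutsideofcone} to deduce $\alpha(\lambda t_n, t_n)\to k\pi$, and use the Bogomolny/AM--GM inequality $e(\alpha)\, r \geq |\alpha_r||\sin\alpha|$ to obtain an energy lower bound that contradicts \eqref{e:lambdaenergy}. The only (inessential) difference is that you track a general degree $k\neq 0$ via a total-variation argument, whereas the paper specializes to degree one and uses a direct change of variables.
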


We first note that if $\alpha$ has degree not equal to zero then $E(u) > \mathcal E_{\mathbb S^2} = 2\pi$. So, by continuity of the energy, there exist $\lambda(t)$ which satisfy \eqref{e:lambdaenergy}. 

\begin{proof}
Imagine that there are $t_n \uparrow \infty$ and a $\lambda > 0$ such that $\lambda(t_n) > \lambda t_n$. It would then follow that $$\begin{aligned}\int_0^{2\lambda(t_n)} e(r,t_n)rdr \geq& \int_0^{\lambda t_n} e(r,t_n)rdr \geq \int_0^{\lambda t_n} \left((\partial_r \alpha)^2 + \frac{\sin^2(\alpha)}{2r^2}\right)rdr\\ \geq& \int_0^{\lambda t_n} \left|\partial_r \alpha\right|\left|\sin(\alpha)\right| dr \geq \int_0^{\alpha(\lambda t_n)} \sin(\rho) d\rho \stackrel{\mathrm{Corollary}\; \ref{c:constantoutsideofcone}}{\rightarrow} \int_0^{\pi} \sin(\rho)d\rho = 2.\end{aligned}$$ This contradicts the defintion of $\lambda(t)$ (i.e. \eqref{e:lambdaenergy}). As such $\lambda(t_n) \ll \lambda t_n$ for any $t_n \uparrow \infty$ and $\lambda > 0$. 
\end{proof}

We can now finish just as in the proof of \cite[Theorem 3.2]{ckls2}, we sketch the argument below:

\begin{proof}[Sketch of Proof of Proposition \ref{p:no_scatt}]
Let $\lambda(t)$ be defined as in Lemma \ref{l:constructionofscales} and let $A(t)$ be such that $A(t) \rightarrow \infty$ as $t\rightarrow \infty$ but also $\lambda(t) \leq A(t) \ll t$. Arguing from Corollary \ref{c:kineticdispersion} (see \cite[Lemma 3.3]{ckls2}) there exists $t_n \uparrow \infty$ and $\lambda_n := \lambda(t_n), A_n := A(t_n)$ such that $$\lim_{n\rightarrow \infty} \frac{1}{\lambda_n}\int_{t_n}^{t_n + \lambda_n} \int_0^{t-A_n}|\partial_t u|^2 rdrdt = 0.$$ Rescaling so that $u_n(x,t) = u(\lambda_n x, \lambda_n t +t_n)$ gives that \begin{equation}\label{e:timederivzero}\int_0^1 \int_0^{r_n}|\partial_t u_n|_g^2rdrdt \rightarrow 0,\end{equation} where $r_n = (t_n-A_n)/\lambda_n \rightarrow \infty$. 

This scaling also preserves the energy. Thus the $u_n \rightharpoonup u_\infty$ in $\dot{H}^1_{\mathrm{loc}}$. Furthermore, $u_\infty$ is time independent by \eqref{e:timederivzero} and therefore is a finite energy harmonic map, which can be assumed to be smooth and globally defined by the work of \cite{helein} and \cite{SacksUhlenbeck}. We now want to show that this convergence is strong, which also implies that $u_\infty$ is non-trivial by \eqref{e:lambdaenergy}. 

Away from $0$, we may assume the sequence converges locally uniformly by local uniform H\"older continuity of Lemma \ref{lem:holder}. Near $0$ we use \eqref{e:lambdaenergy} to see that $u_n(B_1(0))$ is contained in a coordinate chart in $\mathcal N$ (and the chart can be taken uniformly for $n$ large enough). Then one can argue exactly as in \cite[Theorem 2.1]{struweequivariant} (see in particular (3.17) onwards) to conclude that the convergence is strong near $0$. 
\end{proof}

\section{An Energy Gap for Bubbles arising in the flow}\label{s:harmonicmaps}

In this section we study the space of harmonic maps which can possible arise as bubbles in the flow \eqref{equ:wm_sys}. We begin with the following consequence of Lemma \ref{lem:holder}.

\begin{proposition}\label{p:C0_conv}
Let $u: \mathbb R^{1+2} \rightarrow \mathcal M$ be a quasi-equivariant wave-map to the smooth manifold $\mathcal M$. Let $t_n \uparrow T_{\max{u}}$ and $\lambda_n > 0$. If $u^{(n)}$ (defined in Theorem \ref{thm:st_dich}) converges in $H^1_{loc}$ to a non-trivial harmonic map, $\omega: \bS^2 \rightarrow \mathcal M$ then, for each $t$, up to passing to a susbsequence,
\[
u^{(n)}(t, x) \to \omega: \mathbb{R}^2 \to \mathcal{M},
\]
in $C^0_{loc}(\mathbb{R}^2 \setminus \{0\},\, \mathcal{M})$ as $n \to \infty$.
\end{proposition}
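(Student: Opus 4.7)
The plan is to apply Lemma \ref{lem:holder} to the rescaled sequence $u^{(n)}$ and then invoke Arzel\`a--Ascoli, identifying the uniform limit with $\omega$ via the $H^1_{\mathrm{loc}}$ convergence. First I would observe that the rescaling $u^{(n)}(t,x) = u(t_n + r_n t, r_n x)$ preserves quasi-equivariance with respect to the same one-parameter family $\{\Phi_s\}$ of isometries of $\mathcal M$, since the angular coordinate $\theta$ is untouched by spatial dilation. Furthermore, the energy is scale-invariant in two spatial dimensions, so each $u^{(n)}$ has the same finite energy as $u$.

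Next, I would apply Lemma \ref{lem:holder} at a fixed time and track the scaling carefully. A close look at the proof of that lemma shows that the angular bound $|u(t,r,\theta_1) - u(t,r,\theta_2)| \leq C(\Phi_s)|\theta_1 - \theta_2|$ comes purely from the Lipschitz norm of the isometry family and depends on neither the energy nor on $r_0$; only the radial bound $|u(t,r,\theta) - u(t,s,\theta)|^2 \leq (\mathcal E / r_0)|r-s|$ involves $r_0$, and the dependence is precisely reciprocal in $r_0$. Choosing $r_0 = r_n \epsilon$ and tracking the factor $r_n$ introduced by the change of variables $r = r_n \tilde r$ yields, uniformly in $n$ and in $t$, the estimate
\[
|u^{(n)}(t, \tilde r, \theta_1) - u^{(n)}(t, \tilde s, \theta_2)| \leq C(\Phi_s)|\theta_1 - \theta_2| + \sqrt{\mathcal E / \epsilon}\, |\tilde r - \tilde s|^{1/2}
\]
for all $\tilde r, \tilde s \geq \epsilon$ and all $\theta_1, \theta_2 \in \mathbb S^1$. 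This is exactly uniform spatial equicontinuity of $\{u^{(n)}(t,\cdot)\}$ on every annulus $\{\epsilon \leq |x| \leq M\} \subset \mathbb R^2 \setminus \{0\}$.

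Compactness of the target $\mathcal M$ makes uniform boundedness automatic, so Arzel\`a--Ascoli produces a subsequence converging uniformly on every such annulus. The $H^1_{\mathrm{loc}}$ hypothesis then forces a further subsequence to converge almost everywhere to $\omega$, and this almost-everywhere limit must coincide with the continuous uniform limit, identifying the two. A standard diagonal argument over the exhausting sequence of annuli $\{1/k \leq |x| \leq k\}$ produces a single subsequence converging in $C^0_{\mathrm{loc}}(\mathbb R^2 \setminus \{0\}; \mathcal M)$. The one subtle point in this outline is the scaling verification in the second step: if the radial part of Lemma \ref{lem:holder} had any worse dependence on $r_0$ than $r_0^{-1/2}$, or if the angular Lipschitz constant depended on scale, one would lose uniformity in $n$ as $r_n \to 0$, so the precise form of the estimate in Lemma \ref{lem:holder} is essential to the argument.
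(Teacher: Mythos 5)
Your argument is correct and is essentially the paper's proof: invoke Lemma \ref{lem:holder} for uniform Hölder equicontinuity of $\{u^{(n)}\}$ on compact subsets of $\mathbb{R}^2\setminus\{0\}$, apply Arzelà--Ascoli with a diagonal argument, and identify the $C^0$ limit with $\omega$ via the $H^1_{\mathrm{loc}}$ convergence. The only difference is cosmetic: the paper simply applies Lemma \ref{lem:holder} directly to each rescaled wave map $u^{(n)}$ (which has the same energy and the same $\Phi_s$ family, hence the same Hölder constant at fixed $r_0$), whereas you re-derive this uniformity by tracking how the constant scales under the change of variables -- a sound but redundant extra step.
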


\begin{proof}
By Lemma \ref{lem:holder}, the sequence $u^{(n)} = u(t_n + \lambda_n t, \lambda_n x)$ is uniformly H\"older continuous on any compact region $K \subset \subset  \mathbb{R}^2 \setminus \{0\}$. By Arzel\'a-Ascoli, to extract a uniformly convergent subsequence, hence by a diagonal argument and passing to a further subsequence, we can product a subsequence which converges uniformly on any compact subset of $\mathbb{R}^2 \setminus \{0\}$. By uniqueness of limits, we have convergence to $\omega$. 
\end{proof}

\medskip
From this convergence, we can conclude first that the image of the bubble is topologically connected to the image of the rest of the flow.

\begin{proposition}[Connectedness of the flow]\label{p:connectedflow}
Let $u: \mathbb R^{1+2} \rightarrow \mathcal M$ be a quasi-equivariant wave map into a smooth manifold $\mathcal M$, such that bubbling (as above) occurs at time $T_{\max}(u)$ and that the rescaled wave maps converge to the non-trivial harmonic map $\omega: \mathbb S^2 \rightarrow \mathcal M$. Then 
\[
\bigcup_{t < T_{max}(u)} u(t, \mathbb R^2) \cup \omega(\mathbb S^2)
\]
 is a connected subset of $\mathcal M$. 
\end{proposition}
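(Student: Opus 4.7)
The plan is to combine three easy ingredients: connectedness of the flow image, connectedness of the bubble image, and the $C^0_{loc}$ convergence provided by Proposition \ref{p:C0_conv}, which together force the two sets to meet in closure, yielding connectedness of their union by a standard topology lemma.

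First, I would observe that $\mathcal F := \bigcup_{t < T_{\max}(u)} u(t, \mathbb R^2) = u\bigl([0, T_{\max}(u)) \times \mathbb R^2\bigr)$ is connected, being the continuous image of the path-connected set $[0, T_{\max}(u)) \times \mathbb R^2$ under the (smooth, by Proposition \ref{p:existence}) wave map $u$. Similarly, $\omega(\mathbb S^2)$ is connected, since $\mathbb S^2$ is connected and $\omega$ is continuous (indeed $C^\infty$, after the Sacks--Uhlenbeck removable singularity extension recalled in Section \ref{ss:weakharmonic}).

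Next, the key step is to produce a point lying in both $\omega(\mathbb S^2)$ and $\overline{\mathcal F}$. Fix any $x_0 \in \mathbb R^2 \setminus \{0\}$. Proposition \ref{p:C0_conv} furnishes a subsequence along which $u^{(n)}(0, x_0) = u(t_n, r_n x_0) \to \omega(x_0)$ in $\mathcal M$. Since each $t_n < T_{\max}(u)$, the points $u(t_n, r_n x_0)$ all lie in $\mathcal F$, so $\omega(x_0) \in \omega(\mathbb S^2) \cap \overline{\mathcal F}$. I would then invoke the elementary fact that if $A, B$ are connected subsets of a topological space with $\overline{A} \cap B \neq \emptyset$, then $A \cup B$ is connected. (The quick proof: a putative separation $A \cup B = U \sqcup V$ into relatively open disjoint pieces forces $A$ and $B$ each into one of $U, V$; if they land in different pieces, say $A \subseteq U$ and $B \subseteq V$, then a point $p \in B \cap \overline A \subseteq V$, together with relative openness of $V$, yields points of $A$ arbitrarily close to $p$ that must lie in $V$, contradicting $A \subseteq U$ and $U \cap V = \emptyset$.) Applying this with $A = \mathcal F$ and $B = \omega(\mathbb S^2)$ concludes the proof.

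I do not expect any real obstacle: the content is essentially topological, and the uniform convergence provided by Proposition \ref{p:C0_conv} is exactly the input needed to bridge the flow and the bubble. The only point requiring care is the choice $x_0 \neq 0$, since the convergence in Proposition \ref{p:C0_conv} is only on compact subsets of $\mathbb R^2 \setminus \{0\}$; however, any nonzero $x_0$ serves the purpose, and the bubble $\omega$ is defined and continuous at such $x_0$.
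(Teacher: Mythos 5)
Your proof is correct and follows essentially the same approach as the paper's: the paper also observes connectedness of the two pieces from continuity, and then cites Proposition \ref{p:C0_conv} to note that $\omega(\mathbb R^2\setminus\{0\})$ contains limit points of the flow image, concluding by the same standard topological fact. You have simply spelled out the details (the explicit choice of $x_0\neq 0$, the separation argument) that the paper leaves implicit.
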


\begin{proof}
The fact that union of the images of the wave map components 
\[
\mathcal{U} :=  \bigcup_{t < T_{max}(u)} u(t, \mathbb R^2)
\]
and the image of the harmonic map $\omega(\mathbb S^2)$ are each connected follows from the continuity of the involved maps (in both space and time in the case of the wave maps). To see that the union of these two components is connected, we note that by Proposition \ref{p:C0_conv}, the set $\omega(\mathbb{R}^2 \setminus\{0\})$ contains limit points of $\mathcal{U}$, which implies the result.
\end{proof}

The following immediate corollary of Propositions \ref{p:C0_conv} and \ref{p:connectedflow} constrains the form and image of any bubble arising in the flow \eqref{equ:wm_sys}:

\begin{corollary}\label{cor:bubbling_sym}
Any harmonic map $\omega:\bS^2\rightarrow \cN$ (where $\cN$ is the manifold constructed in Section \ref{s:target}) arising as a non-trivial bubble in the flow \eqref{equ:wm_sys} must be of the form \begin{equation}\label{eqn:hm_ansatz}\omega(r,\theta) = (0, Y(r), \alpha(r), \theta),\end{equation} and satisfy $\omega(\bS^2) \subset \overline{\gamma}\times_f\bS^2$. 
\end{corollary}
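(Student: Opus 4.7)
The plan is to propagate the structural information from Lemma~\ref{l:imageofflow} and the quasi-equivariance of the flow \eqref{equ:wm_sys} through the bubbling limit provided by Proposition~\ref{p:C0_conv}. First, I would observe that the ansatz \eqref{equ:sol_ansatz} together with Lemma~\ref{l:imageofflow} gives, for every $t < T_{\max}(u)$, the representation $u(t,r,\theta) = (0, Y(t,r), \alpha(t,r), \theta)$, so in particular $u(t, \bR^2) \subset \gamma \times_f \bS^2$ for every $t \in [0, T_{\max}(u))$.

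Next, I would read off the form of the rescaled sequence $u^{(n)}(t,r,\theta) = u(t_n + r_n t,\, r_n r,\, \theta)$: since the rescaling acts only on the radial and time variables and leaves $\theta$ fixed, each $u^{(n)}$ retains the same decomposition $(0, Y_n(r), \alpha_n(r), \theta)$ on a time slice. Applying Proposition~\ref{p:C0_conv} and passing to a subsequence, $u^{(n)} \to \omega$ in $C^0_{loc}(\bR^2 \setminus \{0\};\, \mathcal{N})$. Reading off components in the limit forces $\omega$ to have the form $(0, Y_\infty(r), \alpha_\infty(r), \theta)$ on $\bR^2 \setminus \{0\}$. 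Smoothness of $\omega$ on the full sphere (H\'elein regularity together with the Sacks--Uhlenbeck removable singularity extension) extends the representation continuously to the two punctures, yielding \eqref{eqn:hm_ansatz}.

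For the image containment, I would again use $C^0_{loc}$ convergence together with the inclusion $u^{(n)}(t,\bR^2) \subset \gamma \times_f \bS^2$ at every time. Since $\gamma \times_f \bS^2$ is a subset of the closed set $\overline{\gamma} \times_f \bS^2$ and pointwise limits land in the closure, we obtain $\omega(x) \in \overline{\gamma} \times_f \bS^2$ for every $x \in \bR^2 \setminus \{0\}$. Continuity of $\omega$ at the origin and at infinity then propagates this inclusion to the whole sphere $\bS^2$.

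This corollary is essentially a bookkeeping consequence of the prior propositions, so I do not expect any substantive obstacle. The only delicate point is verifying that neither the rescaling nor the extraction of a subsequential limit disturbs the quasi-equivariant form; but since the one-parameter family of isometries $\Phi_s$ acts only on the $\theta$ variable (which the parabolic rescaling in $r$ and $t$ fixes) and since uniform limits preserve the decomposition \eqref{equ:sol_ansatz}, this check is immediate.
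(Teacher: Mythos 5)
Your proof is correct and follows essentially the same route the paper intends: you invoke Lemma~\ref{l:imageofflow} to constrain the flow's image to $\gamma\times_f\bS^2$, then push this through the $C^0_{\mathrm{loc}}$ limit from Proposition~\ref{p:C0_conv} and a closure argument. The paper packages the second step via Proposition~\ref{p:connectedflow} (that $\omega(\bR^2\setminus\{0\})$ consists of limit points of the flow's image), but your direct argument that pointwise limits land in the closed set $\overline{\gamma}\times_f\bS^2$ is precisely the content of that proposition, so the two are the same in substance.
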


We are now ready to state and prove the main result of this section: a classification of the bubbles arising in the flow \eqref{equ:wm_sys} at low energies. Before we do, let us recall from Lemma \ref{lem:hmspheres} that  $\mathcal{E}_{\mathbb S^2}$ is the smallest possible energy of a non-trivial harmonic map $\omega: \bS^2 \rightarrow \bS^2$.  Also recall, from above, that $P_1: \cN \rightarrow \bT^2$ is the projection map onto the first coordinate and $P_2: \cN \rightarrow \bS^2$ is the projection map onto the second coordinate. Finally, using stereographic projection (which is energy preserving) we can think of $\omega: \mathbb R^2 \rightarrow \bS^2$ and will do so when convenient.

\begin{lemma}\label{lem:bubbling_loc}
Let $\omega: \bS^2 \rightarrow \cN$ be a non-trivial harmonic map arising as a bubble in the flow \eqref{equ:wm_sys}. Let $\varepsilon_0 = \min\{ \bar{\varepsilon}, (\cot(7\pi/16)/2C)^2\} > 0$, where $\bar{\varepsilon} > 0$ is the constant given by Theorem \ref{t:hmregularity} and $C>0$ is the constant given by \eqref{e:interioroscillationestimate}. Then, either $\mathcal E(\omega) = \mathcal{E}_{\bS^2}$, or $\mathcal E(\omega) > \mathcal{E}_{\bS^2}+ \varepsilon_0/2$. In the former case, $P_1\omega$ is a constant map into the set $\{w = 0\}$ and $P_2\omega$ is a degree one harmonic map from $\bS^2 \rightarrow \bS^2$.
\end{lemma}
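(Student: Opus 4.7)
The plan is to use the equivariant form of $\omega$ together with the image constraint in Corollary \ref{cor:bubbling_sym} to reduce everything to a case analysis on where $P_1 \omega =: \omega_1$ lives in $\overline{\gamma}$, and to exploit the coupled harmonic map equations together with the large value $M$ of $f$ on the middle plateau to extract the energy gap.

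\textbf{Step 1 (lifting dichotomy).} By Corollary \ref{cor:bubbling_sym}, $\omega$ has the equivariant form $(0, Y(r), \alpha(r), \theta)$ with image in $\overline{\gamma} \times_f \bS^2$, and $\omega$ is smooth by \cite{helein}. Since $\bS^2$ is simply connected, $\omega_1: \bS^2 \to \overline{\gamma} \subset \bT^2$ lifts to the universal cover $\bR^2$. The preimage of $\overline{\gamma}$ in $\bR^2$ splits into disjoint connected components: the vertical lines $\{w \in \bZ\}$ (lifts of $\{w = 0\}$) and the countably many individual lifts of $\gamma$; these are pairwise disjoint because $\gamma$ accumulates on $\{w=0\}$ without reaching it. Combined with Lemma \ref{l:finitepaths}, this forces the image of $\omega_1$ to lie either (a) entirely in $\{w=0\}$, or (b) entirely inside a compact sub-arc of $\gamma \setminus \{w=0\}$.

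\textbf{Step 2 (case (a)).} A continuous map $\bS^2 \to \{w=0\} \cong \bS^1$ lifts through $\bR$ and is therefore constant, so $\omega_1 \equiv p \in \{w=0\}$. Then $f(p) = 1$ and $P_2\omega$ is an honest harmonic map $\bS^2 \to \bS^2$, which must be non-trivial since $\omega$ is. Lemma \ref{lem:hmspheres} gives $\mathcal{E}(P_2\omega) \in \{\mathcal{E}_{\bS^2}\} \cup [2\mathcal{E}_{\bS^2}, \infty)$; because $\varepsilon_0 \le \bar{\varepsilon} \le \mathcal{E}_{\bS^2}$ we have $\mathcal{E}_{\bS^2} + \varepsilon_0/2 < 2\mathcal{E}_{\bS^2}$, and the dichotomy follows, with $P_2\omega$ degree one in the minimal case.

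\textbf{Step 3 (case (b)).} Here I aim to show $\mathcal{E}(\omega) > \mathcal{E}_{\bS^2} + \varepsilon_0/2$. First, $P_2\omega$ is non-constant: otherwise $\omega$ would descend to a harmonic map $\bS^2 \to \bT^2$, which is constant by the Eells--Wood theorem (no non-constant harmonic maps from $\bS^2$ to a surface with non-positive Euler characteristic). So $\mathcal{E}(P_2\omega) \ge \mathcal{E}_{\bS^2}$. Next, I use the Euler--Lagrange equation for $Y$, $(rY')' = r f_Y(0, Y) e(\alpha)$, integrated on $(0, \infty)$ with the boundary behavior $rY'(r) \to 0$ (from finite energy and the extra regularity assumed in the paper), to obtain the balance $\int_0^\infty r f_Y(0, Y(r)) e(\alpha(r)) \, dr = 0$. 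By the construction of $f$, $f_Y(0, Y) = 0$ for $|Y| \le \cot(7\pi/16)$ (middle plateau, $f \equiv M$), and $\operatorname{sgn}(f_Y(0, Y)) = -\operatorname{sgn}(Y)$ outside this plateau. Splitting the image arc $[s_-, s_+]$ of $Y$: if $[s_-, s_+]$ lies inside the middle plateau then $f \equiv M$ on the image and $\mathcal{E}(\omega) \ge M\mathcal{E}_{\bS^2}$; if it extends past the plateau on only one side the signed integrand forces $e(\alpha) \equiv 0$ on $Y^{-1}\{|Y|>\cot(7\pi/16)\}$, so all $\alpha$-topology (hence $\ge \mathcal{E}_{\bS^2}$ of energy density) sits in $Y^{-1}(\text{middle})$ where $f = M$, again giving $\mathcal{E}(\omega) \ge M\mathcal{E}_{\bS^2}$; these two sub-cases beat $\mathcal{E}_{\bS^2} + \varepsilon_0/2$ by choice of $M \gg 1$.

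\textbf{Main obstacle.} The remaining sub-case, where $[s_-, s_+]$ straddles the middle plateau ($s_- < -\cot(7\pi/16)<\cot(7\pi/16)<s_+$), is where the $\cot(7\pi/16)$ in the definition of $\varepsilon_0$ enters. The balance can now be satisfied by genuine cancellation, so one must combine three pieces: the bound $\mathcal{E}(\omega) \ge \mathcal{E}(\omega_1) + \mathcal{E}(P_2\omega) + (M-1)\pi\int_{Y\in\mathrm{middle}} (\alpha'^2+\sin^2\alpha/r^2)\,r\,dr$, the fact that the straddle forces $Y$ to traverse an arc of length $\ge 2\cot(7\pi/16)$ across the plateau (giving a Cauchy--Schwarz lower bound on $\mathcal{E}(\omega_1)$ on the radial window where this crossing happens), and a dichotomy on whether the weighted middle term already exceeds $\varepsilon_0/2$ or the near-constancy of $\alpha$ in the middle localizes the $\omega_2$-topology to the shoulders, shortening the radial window of the $Y$-crossing and making the Cauchy--Schwarz estimate effective. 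Packaging these three observations to yield the clean gap of $\varepsilon_0/2 \leq \cot(7\pi/16)/4$ is the real technical work of this lemma.
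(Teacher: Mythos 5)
Your Steps 1 and 2 match the paper's approach in substance (the paper invokes Lemma \ref{l:finitepaths} rather than a lifting argument for the dichotomy, and rules out non-constant $P_2\omega$ in case (b) by noting a bounded harmonic map $\bR^2\to\gamma\cong\bR$ must be constant rather than citing Eells--Wood, but these are cosmetic differences). Your Step 3 correctly identifies the key balance identity $\int_0^\infty r\,\partial_y f(0,Y)\,e(\alpha)\,dr = 0$ (the paper obtains it as the first variation of energy under the family $\Psi_s:(x,y,\alpha,\theta)\mapsto(x,y+s,\alpha,\theta)$, which is the same thing), and your treatment of the ``one-sided'' sub-cases (i) and (ii) is sound.

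The genuine gap is exactly the straddle sub-case you flag, and the missing idea is that the paper does not try to prove a lower bound on $\mathcal{E}(\omega)$ in this sub-case at all -- it shows the sub-case is \emph{vacuous} under the contradiction hypothesis. Specifically, assume $\mathcal{E}(\omega) < \mathcal{E}_{\bS^2} + \varepsilon_0/2$. Since $\int_S f\,e(\alpha)\,r\,dr + 2\int_{\bR^2\setminus S} e(\alpha)\,r\,dr \le \mathcal{E}(\omega)$ where $S=\{f(0,Y)<2\}$, and $\int_{\bR^2} e(\alpha)\,r\,dr \ge \mathcal{E}_{\bS^2}$, one gets $\int_{\bR^2\setminus S}|D\omega|^2 < \varepsilon_0 \le \bar{\varepsilon}$, so the small-energy regularity Theorem \ref{t:hmregularity} applies on $\bR^2\setminus S$. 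The oscillation estimate \eqref{e:interioroscillationestimate} then bounds the total variation of the radial function $Y$ on each connected component of $\bR^2\setminus S$ by roughly $\sqrt{\varepsilon_0}$, which by the choice $\varepsilon_0 \le \cot(7\pi/16)/2$ is strictly smaller than the width $2\cot(7\pi/16)$ of the plateau. Since $Y$ would have to traverse the entire plateau inside a single component of $\bR^2\setminus S$ to change sign, $Y$ cannot change sign, i.e.\ $[s_-, s_+]$ cannot straddle. This collapses your case analysis to (i) and (ii), which you already close. The auxiliary machinery you propose for the straddle case (Cauchy--Schwarz on the radial crossing window, localizing the $\omega_2$-topology to the shoulders) is not needed and would be hard to make quantitative without essentially re-deriving the small-energy regularity input anyway.

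One further small point: you would still need to justify the boundary term $rY'(r)\to 0$ as $r\to 0,\infty$ in your integration by parts; this is cleanest by observing that $\omega$ extends to a smooth harmonic map on $\bS^2$ (Sacks--Uhlenbeck), after which both limits are immediate. The paper sidesteps this by phrasing the identity as a first variation, where the compactness of the domain $\bS^2$ makes the variation automatically admissible.
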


In this proof we will use $Y, \alpha$ interchangeably with $P_1\omega, P_2\omega$ respectively. 

\begin{proof}
By Lemma \ref{l:finitepaths}, $\mathrm{Im}\,P_1\omega$ is either a subset of $\gamma$ or of $\{w = 0\} \equiv \overline{\gamma}\,\setminus \,\gamma$. In the latter case, $f$ is constant on $\{w = 0\}$ so $P_2\omega$ is a harmonic map between spheres and $P_1\omega$ is a harmonic map from $\mathbb S^2 \rightarrow \mathbb S^1$. The maximum principle implies that $P_1\omega$ is a constant, so $P_2\omega$ must be non-trivial, as $\omega$ is non-trivial.  Therefore,  $P_2\omega$ is either a degree one harmonic map or $\mathcal E(P_2\omega) \geq 2 \mathcal E_{\bS^2}$, and the conclusion of the lemma holds in both cases. 

Thus, it suffices to rule out the presence of harmonic maps where $\mathrm{Im}\, P_1\omega \subset \gamma$ satisfying our energy constraint. We first claim that $P_2\omega$ must be non-trivial. Indeed, if $P_2\omega$ is trivial, then $P_1\omega$ is a bounded harmonic map from $\bR^2$ to $\gamma(-\infty, \infty) \cong \bR$. Such a map must be trivial which contradicts the non-triviality of $\omega$. Moreover, since $P_2\omega$ is non-trivial and equivariant, the maximum principle implies that the image of $P_2\omega$ cannot be contained in a hemisphere, and thus $\mathcal E(P_2\omega) \geq \mathcal E_{\bS^2}$. 

Let 
\[
S = \{r \in \bR^2\mid f(0, Y(r)) < 2\},
\]
and assume that $\mathcal E(\omega) < \mathcal E_{\bS^2} + \varepsilon_0/2$.  Since 
  \[
  \int_{\mathbb R^2\backslash S}f(0,Y) e(\alpha)rdr > 2\int_{\mathbb R^2\backslash S} e(\alpha)rdr,
  \] 
we can estimate,
\[
\mathcal E_{\bS^2} + \varepsilon_0/2 > \mathcal E(\omega) \geq  \int_{\mathbb R^2 \backslash S} f(0, Y)e(\alpha)rdr + \int_S e(\alpha)r\,dr \geq 2\mathcal E(P_2\omega) - \int_S e(\alpha)r\,dr,
\]
 and recalling that $\mathcal E(P_2\omega) \geq \mathcal E_{\bS^2}$, it must be the case that 
 \begin{equation}\label{e:largeonS}
 \int_{S} e(\alpha)rdr > \mathcal E_{\mathbb S^2} - \varepsilon_0/2.
 \end{equation}
Again recalling that $\mathcal E_{\bS^2} + \varepsilon_0/2 > \mathcal E(\omega)$, we get
\[
\int_{\bR^2\backslash S} |D\omega|_g^2 < \varepsilon_0.
\]
Thus the hypothesis of Theorem \ref{t:hmregularity} are fulfilled inside of any ball $B \subset \bR^2 \backslash S$. $P_1 \omega = Y$ is radial, so the oscillation of $Y$ inside any component of $\bR^2\backslash S$ is bounded by $2C\varepsilon^{1/2}_0$. Note that $M > 2$ so $f(0,Y(r)) < 2$ implies that $|Y| > \cot(7\pi/16)$ (see \eqref{e:defoff}, \eqref{e:chi}). As $2C\varepsilon^{1/2}_0 < \cot(7\pi/16)$, we conclude that $Y$ does not change sign on $\mathbb R^2$.

Composing with the diffeomorphism $\Psi_s: (x,y,\alpha,\theta) \mapsto (x, y+s, \alpha,\theta)$ we see that 
\begin{equation}\label{e:notharmonic}
\frac{d}{ds}\mathcal{E}(\Psi_s\circ \omega)|_{s=0} = \int_{\bR^2} \partial_yf(0,Y) e(\alpha)rdr \neq 0,
\end{equation}
where the last ``non-equality" follows from \eqref{e:largeonS} and the fact that $Y$ does not change sign on $\bR^2$ (and, consequently, $\partial_y f(0,Y)$ has a sign). That the deformation in \eqref{e:notharmonic} changes the energy to first order contradicts the fact that $\omega$ is harmonic. Thus there are no harmonic maps $\omega$ with $P_1 \omega \subset \gamma$ satisfying our energy constraint, and our proof is complete. 
\end{proof}

\begin{remark}
It is not hard to show that $\mathcal E_{\mathbb S^2}$ is exactly the energy of the lowest energy non-trivial quasi-equivariant harmonic map, $\omega:\mathbb S^2\rightarrow \mathcal N$, what we call $\mathcal E_{\mathrm{quasi}}(\mathcal N)$ in Theorem \ref{t:main}. 
\end{remark}

\section{Properties of the singularity}\label{s:winding}

We now turn to the proof of our main theorem, which we recall here:

\begin{ntheorem}[Main theorem]
There exists a compact smooth Riemannian manifold $(\mathcal{N}, g)$ given by
\[
\mathcal{N} = \mathbb{T}^2 \times_f \mathcal{S}^2
\]
for a certain $C^\infty(\bT^2)$ warping function, $f$, and $C^\infty$-smooth, finite energy, quasi-equivariant initial data $(u_0, u_1)$, which satisfy
\[
\|(u_0, u_1) \|_{\dot H^3 \times \dot H^2} < \infty, \qquad \mathcal E(u_0, u_1) < \mathcal E_{\mathrm{quasi}}(\mathcal N) + \varepsilon_1,
\]
such that the corresponding solution $(u, u_t)$ to \eqref{eq:wm} has a bubbling singularity as $t \to T_{\max}$ which fails to be unique in the sense of Definition \ref{d:solitonresolution}. Above, $\varepsilon_1 >0$ is a constant which depends only on $\mathcal N$, and $\mathcal E_{\mathrm{quasi}}(\mathcal N)$ denotes the smallest energy of a non-trivial quasi-equivariant harmonic map, $\omega: \mathbb S^2 \rightarrow \mathcal N$.
\end{ntheorem}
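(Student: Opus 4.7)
\medskip

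\textbf{Proposal.} My plan is to construct the initial data so that the slow variable $Y$ is positioned far along the geodesic $\gamma$, the fast variable $\alpha$ carries exactly the degree-$1$ energy budget $\mathcal E_{\mathbb S^2}$, and the total energy is only slightly above $\mathcal E_{\mathrm{quasi}}(\mathcal N)$. Concretely, I would fix a large $s_0$ and a quasi-equivariant smooth datum of the form
\[
u_0(r,\theta)=(0,\gamma_2(s_0),\alpha_0(r),\theta),\qquad u_1(r,\theta)=(0,y_1(r),\alpha_1(r),0),
\]
where $\alpha_0$ has degree $1$ in the sense that $\alpha_0(0)=0$, $\alpha_0(\infty)=\pi$ and $\alpha_0$ is close to (but not equal to) the equivariant harmonic map profile from Lemma~\ref{lem:hmspheres}, and where $y_1$ is a small, compactly-supported push of $Y$ in the direction of increasing $|s|$ along $\gamma$. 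By Proposition~\ref{p:existence} such data produce a unique, smooth, quasi-equivariant solution on $[0,T_{\max})$, and by Lemma~\ref{l:imageofflow} the image of $P_1\circ u$ remains inside $\gamma$ throughout. In these coordinates the system reduces to~\eqref{equ:wm_sys}.

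\medskip

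Next I would show bubbling occurs in a prescribed form. If $T_{\max}<\infty$ the finite-time analog of Theorem~\ref{thm:st_dich} produces an energy-concentration bubble; if $T_{\max}=\infty$, then since $\alpha$ has degree $1$, Proposition~\ref{p:no_scatt} gives a sequence $t_n\uparrow\infty$ and scales $\lambda(t_n)\ll t_n$ for which $u(t_n,\lambda(t_n)x)\to\omega$ in $H^1_{\mathrm{loc}}$. In either case, the limit $\omega$ must satisfy $\mathcal E(\omega)\le \mathcal E(u_0,u_1)<\mathcal E_{\mathrm{quasi}}(\mathcal N)+\varepsilon_1$, so by Lemma~\ref{lem:bubbling_loc} (with $\varepsilon_1\le\varepsilon_0/2$) one has $\mathcal E(\omega)=\mathcal E_{\bS^2}$, $P_1\circ\omega\equiv(0,y^*)$ for some $y^*\in\{w=0\}\subset\bT^2$, and $P_2\circ\omega$ is a degree-$1$ equivariant harmonic sphere. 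By Proposition~\ref{p:C0_conv} the convergence upgrades to $C^0_{\mathrm{loc}}$ on $\bR^2\setminus\{0\}$, so the bubble is completely parameterized by the limit point $y^*\in\{w=0\}$.

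\medskip

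The heart of the argument is then to show that $y^*$ is \emph{not} unique: different sequences produce different limit points on the attractor circle $\{w=0\}$. I would argue this in two steps. First, using that $P_1\circ u$ is constrained to $\gamma$ and the energy $\mathcal E$ is preserved, combined with the fact (cf. Lemma~\ref{l:propertiesofmanifold}) that $f-1$ vanishes to infinite order along $\{w=0\}$ while $\partial_y f(0,y)$ has a definite sign, I would show that the geodesic coordinate $s(t)$ with $(0,Y(t,0))=\gamma(s(t))$ cannot stabilize: indeed if $s(t)$ stayed bounded, the potential well structure of $f$ combined with Proposition~\ref{p:no_scatt} (which forces $\alpha$-concentration) would force a contradiction with Corollary~\ref{c:kineticdispersion} and the bubble classification, since a stabilized $Y$ would produce a bubble outside the class allowed by Lemma~\ref{lem:bubbling_loc}. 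Hence $|s(t)|\to\infty$ along $t\to T_{\max}$. Second, by the explicit formula $\gamma(s)=(\pi^{-1}\cot^{-1}s,\,s\bmod 1)$, the projection $\gamma(s(t))$ winds around $\bT^2$ infinitely many times, so its orbit accumulates on all of $\{w=0\}$. Continuity in $r$ of $Y(t,\cdot)$ at small scales then transfers this to $P_1\circ u(t,r(t)x)$: for every $y^*\in\{w=0\}$ one can select $t_n\uparrow T_{\max}$ with $Y(t_n,0)\to y^*$, and the corresponding bubbles differ, violating Definition~\ref{d:solitonresolution}.

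\medskip

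The main obstacle is the winding step. In the parabolic setting of Topping~\cite{toppingwinding}, monotonicity of energy and maximum principle arguments trap the flow inside $\gamma\times\bS^2$ and force dissipation of any velocity tangent to the geodesic coordinate; the ``goat tracks'' picture of~\eqref{e:goattracks} then drives winding. In the Hamiltonian setting, energy conservation alone is compatible with an oscillatory but stabilizing trajectory, as the one-dimensional ODE discussion in Section~\ref{ss:lojasiewicz} illustrates. The key extra input I would exploit is that the energy which is \emph{not} carried by the forming bubble is at most $\varepsilon_1$, so once $\alpha$-concentration has extracted the degree-$1$ quantum $\mathcal E_{\mathbb S^2}$ from the background, the remaining kinetic budget for $(\partial_t Y,\partial_t\alpha)$ outside the bubble scale is too small to reverse the drift of $s(t)$ induced by $-\partial_y f(0,Y)\,e(\alpha)$. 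Making this quantitative---essentially, an ``elliptic behavior in the interior of the light cone'' statement in the spirit of Grinis~\cite{Grinis}, combined with the sign properties of $f$ established in Section~\ref{s:target}---is the technical crux of the construction.
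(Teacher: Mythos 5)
Your setup, the derivation that bubbling must occur, and the invocation of Lemma~\ref{lem:bubbling_loc} to pin down the bubble as a degree-one equivariant sphere map with $P_1\omega\subset\{w=0\}$ all match the paper. The divergence, and the genuine gap, is in the winding step.

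You propose to prove directly that the geodesic coordinate $s(t)$ of $Y(t,0)$ ``cannot stabilize'' by a dynamical argument involving energy budgets, the sign of $\partial_y f$, and an ``elliptic behavior in the interior of the light cone'' estimate in the spirit of Grinis; you then explicitly flag making this quantitative as ``the technical crux of the construction.'' But this hard step is not what the paper does, and it is not needed. The paper's argument (Proposition~\ref{p:winding}) is purely topological: by Lemma~\ref{l:imageofflow} the flow satisfies $P_1 u(t,\cdot)\in\gamma$ for all $t<T_{\max}$, while by Lemma~\ref{lem:bubbling_loc} the bubble satisfies $P_1\omega\subset\{w=0\}=\overline{\gamma}\setminus\gamma$. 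Since Proposition~\ref{p:C0_conv} upgrades the bubbling convergence to $C^0_{\mathrm{loc}}(\bR^2\setminus\{0\})$, for any compact $K\subset\subset\bR^2\setminus\{0\}$ and any $t_0\gg 1$ one has $u^{(n)}(K)\subset\gamma\bigl((-\infty,-t_0]\cup[t_0,\infty)\bigr)$ for $n$ large, because those tails of $\gamma$ are the only parts of $\gamma$ near $\{w=0\}$. Since $\gamma$ is not null-homotopic in $\cN$ (and $\pi_1(\cN)=\bZ^2$), its lift $\hat\gamma$ is unbounded, so the lifts $\hat{u}^{(n)}|_K$ are unbounded; this is exactly the (strongly) winding condition by Proposition~\ref{r:unboundedwinding}. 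No separate argument that $s(t)$ escapes is required --- the escape is forced by the bubble classification and confinement to $\gamma$.

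There is a second, smaller issue: your final step argues via ``continuity in $r$ of $Y(t,\cdot)$ at small scales'' to transfer accumulation of $Y(t_n,0)$ on $\{w=0\}$ to the bubble scale $r(t)$, and from this to conclude there are distinct limiting bubbles $y^*$. The continuity you invoke is not uniform near $r=0$, which is precisely where the bubble forms, so the transfer does not go through as stated. The paper avoids this by never reasoning about $Y(t,0)$ at all: instead, non-uniqueness follows from Theorem~\ref{t:nonuniqueness}, which shows that strongly winding is incompatible with the existence of a continuous scaling $r(t)$ for which $u(t,r(t)\cdot)\to\omega$ in $C^0_{\mathrm{loc}}$, via boundedness of the lift of $\omega(K)$ in the universal cover. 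In short: you have the right ingredients but attempt a much harder, dynamical route to winding and to constructing multiple limits; the paper's topological argument (confinement $+$ bubble location $+$ $C^0$ convergence $+$ unbounded universal cover of $\gamma$) sidesteps your acknowledged crux entirely.
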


Before proceeding with the proof of the main result we summarize what we know so far about solutions to  \eqref{eq:wm}, or more precisely to solutions of \eqref{equ:wm_sys}. In light of Sterbenz and Tataru's dichotomy, Theorem \ref{thm:st_dich}, at every finite time $t_0$, the solution either concentrates energy at a point and bubbles off (a Lorentz transform of) a finite energy harmonic map or it can be continued smoothly past $t_0$. If the solution exists as $t\rightarrow \infty$ then it either scatters in the limit or bubbles off (a Lorentz transform of) a finite energy harmonic map.  If the ``spherical part" of the initial data has degree one, Proposition \ref{p:no_scatt} implies that blow-up in the form of bubbling off a harmonic map must occur (either in finite time or as $t\rightarrow \infty$).

Now note that quasi-equivariance implies that there is no Lorentz transform symmetry in the solutions, and hence the convergence of the bubbling sequence (as described in part (a) of Theorem \ref{thm:st_dich}) must be to an entire, non-constant harmonic map. Moreover, singularity formation at any point in the domain requires the bubbling of at least as much energy as the lowest energy non-trivial harmonic map. Hence by additivity of the energy, bubbling can occur only at finitely many points, and so in the quasi-equivariant setting the only possible blow-up point in the domain is the origin $x = 0$. 

Finally, if the initial conditions \eqref{e:initconditions} have energy less than $\mathcal E_{\mathbb S^2} + \varepsilon_0/2$, then it must be the case that the bubble also has energy less than this. By Lemma \ref{lem:bubbling_loc}, this implies that the harmonic map, $\omega$, decomposes into a constant map into $\{w=0\}\subset \bT^2$ and a degree one equivariant harmonic map between spheres. 

\medskip
Summarizing the previous discussion, we have, thus far, established that:

\begin{theorem}\label{thm:global_blow_up}
Consider the wave map equation \eqref{eq:wm} with smooth, quasi-equivariant symmetric, finite energy initial data of the form \eqref{e:initconditions}. Further assume that $\alpha_0$ (the ``spherical part" of the initial conditions) is a degree one map between 2-spheres.  Then there exists a unique solution to \eqref{eq:wm} on the time interval $[0, T_{\max})$, and a sequence of times $t_n\uparrow T_{\max}$ and scales $\lambda_n$ with
\begin{equation}
\lambda_n = \begin{cases} o(T_{max} - t_n) & T_{max} < \infty \\
 o(t_n) & T_{max} = \infty \end{cases}
\end{equation}
so that the rescaled sequence of maps
\[
u^{(n)}(t, x) = u(\lambda_n t+ t_n, \lambda_n x)
\]
converges strongly in $H^1_{loc}$ to an entire Harmonic map 
\[
\omega : \mathbb{R}^2 \to \mathcal{N}
\]
of nontrivial energy. Furthermore, if  $$\mathcal E(u_0, u_1) < \mathcal E_{\mathbb S^2} + \varepsilon_0/2,$$ then $\omega = (P_1\omega, P_2\omega)$ satisfies that $P_1\omega = (0, z_0)$ for some $z_0 \in [0,1]$ and $P_2\omega$ is a degree one equivariant harmonic maps between 2-spheres.
\end{theorem}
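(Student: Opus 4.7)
The plan is to assemble this theorem from the tools already developed in the paper; it is essentially a synthesis of Sterbenz--Tataru's dichotomy, the no-scattering result of Section~\ref{s:flow}, and the classification of low-energy bubbles in Section~\ref{s:harmonicmaps}. First I would apply Proposition~\ref{p:existence} to obtain a unique $\dot H^3 \times \dot H^2$ solution $u$ on a maximal interval $[0, T_{\max})$; uniqueness combined with the ansatz \eqref{e:initconditions} propagates the quasi-equivariant symmetry for all $t<T_{\max}$, and Lemma~\ref{l:imageofflow} confines the image of $u$ to $\overline{\gamma} \times_f \mathbb{S}^2$, so that the flow is governed by the reduced system \eqref{equ:wm_sys} in the $(Y,\alpha)$ coordinates.

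Next I would extract the bubbling sequence via the Sterbenz--Tataru dichotomy. In the finite-time case $T_{\max}<\infty$, one applies the analogous finite-time version of Theorem~\ref{thm:st_dich}; option (b) there would produce a smooth extension past $T_{\max}$, contradicting maximality, so option (a) holds. In the infinite-time case $T_{\max}=\infty$, one applies Theorem~\ref{thm:st_dich} directly: the assumption that $\alpha_0$ has degree one forces $\mathcal{E}(u)>\mathcal{E}_{\mathbb{S}^2}>0$, and Proposition~\ref{p:no_scatt} then rules out option (b), so again option (a) holds. In either setting one obtains $t_n \uparrow T_{\max}$, centers $x_n$, and scales $r_n$ with the asserted decay rate such that the rescaled maps converge strongly in $H^1_{loc}$ to a Lorentz transform of an entire, nontrivial harmonic map.

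To upgrade this to the stated convergence (with $x_n=0$ and no Lorentz factor), I would use quasi-equivariance twice. First, the $\mathbb{S}^1$-invariance of the energy density (Proposition~\ref{p:radialderivative}) implies that if some $x_n$ accumulated away from the origin, the rotated sequences would produce a whole circle of concentration points, each carrying at least the positive energy quantum guaranteed by Corollary~\ref{c:leastenergy}; this would violate $\mathcal{E}(u_0,u_1)<\infty$. Hence we may take $x_n=0$. Once the concentration is anchored at the origin, the surviving $\mathrm{SO}(2)$-symmetry passes to the limit and forces any Lorentz boost in the ambient light cone to commute with the full rotation group, which is possible only for the identity, so $\omega$ is itself an entire, nontrivial harmonic map $\mathbb{R}^2 \to \mathcal{N}$.

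Finally, the low-energy classification follows directly from Lemma~\ref{lem:bubbling_loc}: strong $H^1_{loc}$ convergence combined with conservation of energy gives $\mathcal{E}(\omega) \leq \mathcal{E}(u_0,u_1) < \mathcal{E}_{\mathbb{S}^2}+\varepsilon_0/2$, so the high-energy branch of that lemma is excluded and we land in the alternative $\mathcal{E}(\omega)=\mathcal{E}_{\mathbb{S}^2}$ with $P_1\omega \equiv (0,z_0)$ constant in $\{w=0\}$ and $P_2\omega$ a degree-one equivariant harmonic map between spheres. The main subtle point is the Lorentz-elimination step, where one must carefully reconcile the symmetry inherited by the limit with the non-radial character of a boost; the remainder is a bookkeeping exercise that packages Sterbenz--Tataru, Proposition~\ref{p:no_scatt}, and Lemma~\ref{lem:bubbling_loc} into the desired form.
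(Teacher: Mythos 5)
Your proposal is correct and follows essentially the same route as the paper: the paper's ``proof'' of Theorem~\ref{thm:global_blow_up} is exactly the discussion that precedes its statement, which assembles the Sterbenz--Tataru dichotomy (with option (b) ruled out in finite time by maximality and in infinite time by Proposition~\ref{p:no_scatt}), eliminates off-origin centers and Lorentz boosts via quasi-equivariance together with the energy quantum of Corollary~\ref{c:leastenergy}, and then invokes Lemma~\ref{lem:bubbling_loc} for the low-energy classification, precisely as you do.
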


We note that it is easy to produce initial conditions which satisfy the hypothesis of Theorem~\ref{thm:global_blow_up}. Using the notation of \eqref{e:initconditions}, one such example is letting $(\alpha_0(r), \theta)$ be a degree one equivariant harmonic map, $\alpha_1(r) \equiv 0 \equiv Y_1(r)$ and $Y_0(r) \equiv c$ where $c$ is a constant large enough such that $f(0,c) < 1 + \varepsilon_0/(2\mathcal E_{\mathbb S^2})$. It is important to observe that even though $(\alpha_0, \theta)$ is a harmonic map into $\mathbb S^2$ and $Y_0$ is a harmonic map into $\mathbb T^2$ the map $u_0(r, \theta) := (0, Y_0, \alpha_0(r), \theta): \mathbb R^{1+2} \rightarrow \mathbb T^2 \times_f \mathbb S^2$ is \emph{not} a harmonic map (as one reduces energy to first order by ``sliding down" the geodesic $\gamma$). As such these initial conditions do not lead to a stationary wave map (see Proposition \ref{p:winding} below). 

\subsection{Winding Singularities}\label{ss:winding}

In this subsection we prove that the bubbling guaranteed by Theorem \ref{thm:global_blow_up} is winding in the sense of \cite{toppingwinding}, and in fact enjoys a stronger form of winding which we introduce below, which we call ``strongly winding''. We will then prove that a strongly winding bubble implies non-uniqueness in the sense of Definition \ref{d:solitonresolution}. Our stronger notion of winding requires winding along \emph{all} sequences, as opposed to just one. We find this definition to be slightly easier to work with, and, in the situation where only one bubble develops at a given point and time, equivalent to the standard definition of winding. 

\medskip
 Before we introduce the following stronger notion of winding recall that if $\mathcal{M}$ is a Riemannian manifold, then $\widehat{\mathcal M}$ its its universal cover and, given a function $u : \mathbb{R}^{1+d} \to \mathcal{M}$, we let $\hat{u}: \mathbb R^{1+d} \rightarrow \widehat{\mathcal M}$ be its lift to the universal cover. 

\begin{definition}[Strongly winding bubbling]\label{d:strong_winding}
A quasi-equivariant wave map $u: \mathbb R^{1+2} \rightarrow \mathcal M$ exhibits strongly winding bubbling at time $T_{\max}$ and the origin $ 0 \in \mathbb R^2$ if for any sequences $\{r_n\}$, and $\{t_n\}$, satisfying
\begin{equation}
 t_n \uparrow T_{\max}, 
 \qquad r_n = \begin{cases} o(T_{max} - t_n) & T_{max} < \infty \\
 o(t_n) & T_{max} = \infty \end{cases}
\end{equation}
such that $u(t_n, r_nx) \rightarrow \omega(x)$ in $C^0_{\mathrm{loc}}(\bR^2\backslash \{0\}; \mathcal M)$, where $\omega$ is a non-constant harmonic map, the lifts $\hat{u}(t_n,r_nx)$ have no convergent subsequence in $C^0_{\mathrm{loc}}(\bR^2\backslash \{0\}; \widehat{\mathcal M})$. 
\end{definition}

For a solution $u$ with winding bubbling, we say that $u$ has {\it a winding bubble with respect to sequences $\{t_n\}$ and $\{r_n\}$}, when $\{t_n\}$ and $\{r_n\}$ are the sequences guaranteed by Definition \ref{d:winding}. The following proposition provides an equivalent definition of (strongly) winding.

\begin{proposition}\label{r:unboundedwinding}
If $u:\mathbb R^{1+d} \rightarrow \mathcal M$ develops a bubble at $(T_{\max}, 0)$, then the bubble is winding with respect to sequences $\{t_n\}$ and $\{r_n\}$ if and only if there exists a compact $K \subset \subset \bR^d \backslash \{0\}$ such that every subsequence 
\[
\{\hat{u}(t_{n_j}, r_{n_j}x)\big|_K\}
\]
is unbounded.
\end{proposition}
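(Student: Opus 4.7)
The plan is to prove both implications by contrapositive, with the reverse implication being essentially immediate and the forward implication reducing to an Arzel\`a-Ascoli argument applied to the lifts.

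For ``$\Leftarrow$'', suppose the singularity is not winding with respect to $\{t_n\}, \{r_n\}$, so that some subsequence $\hat{u}(t_{n_j}, r_{n_j} x)$ converges in $C^0_{\mathrm{loc}}(\bR^d \setminus \{0\};\, \widehat{\mathcal M})$. Its restriction to any compact $K \Subset \bR^d \setminus \{0\}$ is then uniformly convergent and hence bounded, contradicting the existence of a compact $K$ on which every subsequence of the lifts is unbounded.

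For ``$\Rightarrow$'', I would assume that for every compact $K \Subset \bR^d \setminus \{0\}$ some subsequence of $\hat{u}(t_{n_j}, r_{n_j}x)|_K$ is bounded, and produce a convergent subsequence of lifts. Fixing an exhaustion $K_1 \Subset K_2 \Subset \cdots$ of $\bR^d \setminus \{0\}$, a standard diagonal extraction yields a subsequence (still indexed by $n$) for which $\hat{u}(t_n, r_n \cdot)$ takes values in a compact set $L_m \subset \widehat{\mathcal M}$ on each $K_m$. Because the rescaling $(t,x) \mapsto (t_n + r_n t, r_n x)$ preserves finite energy, the wave map equation, and the quasi-equivariant structure, Lemma \ref{lem:holder} applied to the rescaled map at time zero gives uniform H\"older continuity of $u(t_n, r_n \cdot)$ on each $K_m$ with constants independent of $n$. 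Compactness of $L_m$ furnishes a radius $\delta_m > 0$ such that every $\delta_m$-ball about a point of $\pi(L_m) \subset \mathcal M$ is evenly covered by the projection $\pi: \widehat{\mathcal M} \to \mathcal M$; choosing a spatial scale $\rho_m > 0$ small enough that the downstairs H\"older estimate forces the oscillation of $u(t_n, r_n \cdot)$ over any $\rho_m$-ball in $K_m$ to stay below $\delta_m$, the lift on that ball agrees with the composition of $u(t_n, r_n \cdot)$ with a local isometric section of $\pi$, and therefore inherits the same H\"older bound. Arzel\`a-Ascoli on each $K_m$ followed by a second diagonal extraction then produces a subsequence converging in $C^0_{\mathrm{loc}}(\bR^d \setminus \{0\};\, \widehat{\mathcal M})$, contradicting winding.

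The principal technical point is this lifting step: one must verify that local isometry of the cover, combined with compactness of the image of the subsequence in $\widehat{\mathcal M}$, actually converts the downstairs H\"older estimate into $n$-independent equicontinuity upstairs. This is straightforward given the uniformity of the evenly covered radius on compact subsets of $\widehat{\mathcal M}$, but it must be coupled carefully to the fact that Lemma \ref{lem:holder} only provides H\"older control away from the spatial origin, which is precisely why the whole argument is carried out on compact subsets of $\bR^d \setminus \{0\}$.
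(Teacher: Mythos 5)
Your approach---lifting the downstairs H\"older equicontinuity to the universal cover via local trivialization of the covering map and then invoking Arzel\`a--Ascoli---is the same approach the paper takes, and your $\Leftarrow$ direction is correct as stated.

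There is, however, a genuine gap in your $\Rightarrow$ direction at the diagonal-extraction step. Your hypothesis (the negation of the conclusion) gives, for each compact $K_m$, \emph{some} subsequence of the full sequence $\{\hat u^{(n)}\}$ that is bounded on $K_m$. A Cantor diagonal requires that from any subsequence one can extract a further subsequence bounded on $K_{m}$; here the bounded subsequence for $K_{m}$ need have no tail in common with the one you already extracted for $K_{m-1}$, so the nested extraction does not go through as written. (It is also slightly circular to define $L_m$ as the image of the extracted subsequence before the extraction has been justified.)

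The fix, which your own equicontinuity argument essentially supplies, is that no diagonal over the exhaustion is needed. Since $\bR^d\setminus\{0\}$ is connected for $d\ge 2$, once a single subsequence $S$ is bounded on one compact $K_0$ with nonempty interior, the uniform modulus of continuity of the lifts (obtained, as you do, by chaining local trivializations along paths inside a compact joining points of $K$ to $K_0$) shows $S$ is bounded on every compact $K$; equivalently, a bound at one point propagates by equicontinuity. After this observation, Arzel\`a--Ascoli on each $K_m$ followed by one diagonal (now over the convergence, not over boundedness) gives $C^0_{\mathrm{loc}}$ convergence, contradicting winding. It is worth noting that the paper's own proof elides the same subtlety---it passes from ``no convergent subsequence'' to ``there exists a compact on which the family is unbounded,'' which on its own gives $\limsup=\infty$ rather than the required ``every subsequence is unbounded''---and the same propagation argument is what closes that gap there too.
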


\begin{proof}
This proposition follows readily from the equicontinuity of the lifts and compactness, but we include the details for completeness. Suppose first that the bubble is winding in the sense of Definition \ref{d:winding}. Since the sequence $\{u^{(n)}\}$ is equicontinuous on every compact set, the family given by the lifts $\hat{u}^{(n)}:= \hat{u}(t_n, r_nx)$ is still equicontinuous on every compact set. Indeed, fix a compact set  $K \subset \subset \bR^d \backslash \{0\}$, $x \in K$ and $\varepsilon > 0$, and let $\delta > 0$ be as in the definition of equicontinuity of the family $\{u^{(n)}\}$. Making $\delta$ smaller if necessary, we can restrict to a sufficiently small neighborhood so that the covering map trivializes. This yields the equicontinuity of the lifts. Hence, the only way the lifted sequence can fail to be precompact is if there exists a compact set on which the family is unbounded.  

Conversely, if there exists such a compact subset $K$, then unboundedness implies that there is no convergent subsequence, and hence the singularity is winding. 
\end{proof}

\begin{remark}
The previous proposition adapts readily to strongly winding bubbling.
\end{remark}

We are now prepared to establish the following main proposition about the nature of the singularity.

\begin{proposition}\label{p:winding}
The flow described \eqref{equ:wm_sys} with initial conditions as in Theorem \ref{thm:global_blow_up} exhibits strongly winding bubbling at $(T_{max}, 0)$.
\end{proposition}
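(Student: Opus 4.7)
The plan is to argue by contradiction using the geometry of the lifted geodesic in the universal cover. Given any sequences $t_n \uparrow T_{\max}$ and $r_n$ satisfying Definition \ref{d:strong_winding} with $u^{(n)}(x) := u(t_n, r_n x) \to \omega(x)$ in $C^0_{\mathrm{loc}}(\bR^2\setminus\{0\}; \mathcal N)$, I will show that for every fixed $x_0 \in \bR^2 \setminus\{0\}$ the lifted values $\hat u^{(n)}(x_0)$ leave every compact subset of $\widehat{\mathcal N}$; this alone precludes the existence of any $C^0_{\mathrm{loc}}$-convergent subsequence of $\hat u^{(n)}$.

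First I would set up the picture of the lifts. By Lemma \ref{l:imageofflow}, $P_1 \circ u$ takes values in the injectively immersed geodesic $\gamma \subset \bT^2$. Since the time-space domain of $u$ is connected and the preimage of $\gamma(\bR)$ under the covering map $\bR^2 \to \bT^2$ is a disjoint union of smooth curves (each the image of a single global lift of $\gamma$, since distinct lifts cannot intersect by the injectivity of $\gamma$ in $\bT^2$), the continuous map $P_1 \circ \hat u$ remains on a fixed such component, which up to a deck transformation equals
\[
C = \Bigl\{ \bigl(\tfrac{1}{\pi}\cot^{-1}(s),\, s \bigr) : s \in \bR \Bigr\} \subset \{0 < w < 1\}.
\]
Because $\cot^{-1} : \bR \to (0, \pi)$ is a homeomorphism, the curve $C$ is a properly embedded copy of $\bR$, and the projection of a point $(w, y) \in C$ to $\bT^2$ lands in the circle $\{w = 0\}$ only in the limit $|y| \to \infty$.

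Next I would invoke the bubble classification. By Theorem \ref{thm:global_blow_up} (whose proof relies on the low-energy hypothesis together with Lemma \ref{lem:bubbling_loc}), the bubble $\omega$ satisfies $P_1 \omega \equiv (0, z_0)$ for some $z_0 \in [0,1)$, with $P_2\omega$ a degree-one equivariant map between two-spheres. Fixing any $x_0 \in \bR^2\setminus\{0\}$, uniform-on-compacts convergence gives $P_1 u(t_n, r_n x_0) \to (0, z_0)$ in $\bT^2$. Writing $P_1 \hat u(t_n, r_n x_0) = \bigl(\tfrac{1}{\pi}\cot^{-1}(s_n),\, s_n\bigr) \in C$, this torus-convergence into $\{w = 0\}$ forces $|s_n| \to \infty$, and hence $\hat u(t_n, r_n x_0)$ escapes every compact subset of $\widehat{\mathcal N}$. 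In particular, no subsequence of the family $\hat u^{(n)}$ can converge pointwise at $x_0$, let alone in $C^0_{\mathrm{loc}}$, proving strong winding.

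I expect the main obstacle to be ensuring that the bubble $\omega$ has the specific form $P_1\omega \in \{w=0\}$, since otherwise a bubble with $P_1 \omega$ taking values on $\gamma$ itself would lift continuously and no winding would occur. This is precisely the content of the careful energy-threshold argument in Lemma \ref{lem:bubbling_loc}, together with the construction of $\mathcal N$ in Section \ref{s:target}; once this classification is in hand, strong winding is a direct topological consequence of the fact that $\gamma$ winds infinitely many times around the $z$-circle as it approaches the limit set $\{w=0\}$, so that any point of $\{w=0\}$ lies ``at infinity'' on the lifted curve $C$.
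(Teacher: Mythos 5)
Your proof is correct and takes essentially the same approach as the paper's: both reduce to Lemma \ref{lem:bubbling_loc} to conclude $P_1\omega \subset \{w=0\}$ (the paper applies it to an arbitrary bubble limit along an arbitrary admissible sequence, which is the right way to phrase it for \emph{strong} winding; your parenthetical reference to Lemma \ref{lem:bubbling_loc} captures this), and both then lift to the universal cover and exploit that $\gamma$ accumulates on $\{w=0\}$ only as its parameter tends to $\pm\infty$, forcing the lifted values to escape to infinity. Your version is marginally more explicit (parameterizing the lifted component $C$ and showing $|s_n|\to\infty$ pointwise, rather than going through the compactness criterion of Proposition \ref{r:unboundedwinding}), but it is the same argument.
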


\begin{proof}
We know the flow needs to converge to a bubble as described in Lemma \ref{lem:bubbling_loc}. Let $\gamma$ be the geodesic described in Section \ref{s:target}. By the path lifting property, $\gamma$ may be lifted to a unique path $\hat{\gamma}$ in the universal cover, which is necessarily an unbounded curve since $\gamma$ is not null-homotopic in $\cN$ and $\pi_1(\cN) = \bZ^2$. 

Let now $\{t_n\}$ and $\{\lambda_n\}$ be any sequences along which
\[
u^{(n)}(t, x) = u(\lambda_nt + t_n, \lambda_n x)
\]
converge to $\omega$, see Theorem \ref{thm:global_blow_up}. It simplifies things to fix a time slice, so we abuse notation by letting $u^{(n)}(x) = u^{(n)}(0, x)$. Lemma \ref{lem:bubbling_loc} tells us that $P_1\omega \subset \{w=0\}$ which implies that for any $t_0 \gg 1$ and any compact set $K\subset \subset \mathbb R^2\backslash \{0\}$, there exists an $N = N(t_0, K)$ such that if $n \geq N$, then  $P_1u^{n}(K) \subset \gamma((-\infty, -t_0] \cup [t_0, \infty))$\footnote{Recall that by Proposition \ref{p:C0_conv}, $P_1 u^{n}\rightarrow P_1\omega$ in $C^0(K)$ and that $\{w= 0\}$ is exactly the set of limit points of $\gamma(t)$ as $t\rightarrow \pm \infty$}. Lifting to the universal cover, the unboundedness of $\hat{\gamma}$ implies that the sequence $\hat{u}^{(n)}|_{K}:= \hat{u}(t_n, r_nx)|_{K}$ must be unbounded, and hence by Proposition \ref{r:unboundedwinding}, the singularity is winding. 
\end{proof}

%\begin{remark}\label{r:norate}[Winding without a rate]
%It may be somewhat surprising that we do not obtain a rate at which the bubble ``winds" around the geodesic $\gamma$, and yet still obtain that the rescaled flow lives a neighborhood of $\{w= 0\} \equiv \overline{\gamma}\backslash \gamma$. We remind the reader that this is due to the combination of the topological argument which rules out scattering (i.e. Propositon \ref{p:no_scatt}) and an energy theoretic argument which shows that the only bubbles which can form must lie on $\{w= 0\}$ (i.e. Lemma \ref{lem:bubbling_loc}).
%\end{remark}

Our final result shows that a strongly winding bubbling cannot give rise to a unique bubble in the sense of Definition \ref{d:solitonresolution}.

\begin{theorem}\label{t:nonuniqueness}
If $u$ exhibits strongly winding bubbling at $x_0$ and time $T_{\max}$, then it does not have a unique bubble at that point and time.
\end{theorem}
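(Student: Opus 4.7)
The plan is to proceed by contradiction. Assume $u$ has a strongly winding singularity at $(T_{\max}, 0)$ and simultaneously admits a unique bubble via a continuous scale $r(t) : [0, T_{\max}) \to (0, \infty)$ and a non-trivial harmonic map $\omega : \bR^2 \to \cN$ as in Definition \ref{d:solitonresolution}. Pick any sequence $t_n \uparrow T_{\max}$ and set $r_n := r(t_n)$; the decay built into Definition \ref{d:solitonresolution} ensures $r_n$ satisfies the hypothesis of Definition \ref{d:strong_winding}, while the unique bubble property yields $u(t_n, r_n x) \to \omega(x)$ in $C^0_{\mathrm{loc}}(\bR^2\setminus\{0\}; \cN)$. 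I will derive a contradiction by upgrading this downstairs convergence to convergence of the lifts in $C^0_{\mathrm{loc}}(\bR^2\setminus\{0\}; \widehat{\cN})$, contradicting strongly winding.

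Since $\bR^2$ and $[0, T_{\max}) \times \bR^2$ are simply connected, I fix continuous lifts $\hat{\omega} : \bR^2 \to \widehat{\cN}$ of $\omega$ and $\hat{u} : [0, T_{\max}) \times \bR^2 \to \widehat{\cN}$ of $u$. Compactness of $\cN$ yields a uniform $\delta_0 > 0$ such that every ball $B_r(p) \subset \cN$ with $r < \delta_0$ is evenly covered by the universal covering map $\pi : \widehat{\cN} \to \cN$; fix $0 < \epsilon < \delta_0$. Let $K \subset \bR^2 \setminus \{0\}$ be any connected compact set (e.g., a closed annulus). By the uniform convergence on $K$, there exists $T < T_{\max}$ such that $d_{\cN}(u(t, r(t)x), \omega(x)) < \epsilon$ for all $(t, x) \in [T, T_{\max}) \times K$.

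For each such $(t, x)$ the point $\hat{u}(t, r(t)x)$ lies in $\pi^{-1}(B_\epsilon(\omega(x))) = \bigsqcup_{g \in \pi_1(\cN)} B_\epsilon(g \cdot \hat{\omega}(x))$, so there is a unique deck transformation $g(t, x)$ with $\hat{u}(t, r(t)x) \in B_\epsilon(g(t, x) \cdot \hat{\omega}(x))$. Continuity of $\hat{u}$, $\hat{\omega}$, and $r$, together with disjointness of distinct sheets, shows $g(t,x)$ is locally constant; since $[T, T_{\max}) \times K$ is connected, $g \equiv g_0$ is constant. As $\pi$ restricted to any single sheet is an isometry,
\[
d_{\widehat{\cN}}\bigl(\hat{u}(t, r(t)x),\, g_0 \cdot \hat{\omega}(x)\bigr) \;=\; d_{\cN}\bigl(u(t, r(t)x),\, \omega(x)\bigr) \;\longrightarrow\; 0
\]
uniformly on $K$ as $t \to T_{\max}$.

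Exhausting $\bR^2 \setminus \{0\}$ by a sequence of overlapping closed annuli, the constants $g_0$ produced on overlapping annuli must agree (they are determined by $g$ on the overlap), so they assemble into a single $g_0 \in \pi_1(\cN)$ for which $\hat{u}(t_n, r_n x) \to g_0 \cdot \hat{\omega}(x)$ in $C^0_{\mathrm{loc}}(\bR^2\setminus\{0\}; \widehat{\cN})$. This contradicts Definition \ref{d:strong_winding}, completing the argument. The main technical input is the uniform even-cover radius $\delta_0$ afforded by compactness of $\cN$; the conceptual point is that the \emph{continuity} of the scale $r(t)$ forces the lift along the scaled path to remain in a single sheet of the cover, and so convergence downstairs is automatically promoted to convergence upstairs -- precisely the behavior a winding singularity rules out.
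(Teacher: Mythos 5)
Your argument is correct and follows essentially the same strategy as the paper's: assume a unique bubble with continuous scale $r(t)$, exploit that continuity to control the lift $\hat{u}(t, r(t)\,\cdot)$ in the universal cover, and derive a contradiction with the strongly winding property. The paper phrases the contradiction through Proposition~\ref{r:unboundedwinding}, showing the lifted orbit stays in the bounded set $N_1(\widehat{\omega}(K))$, whereas you go slightly further and exhibit an actual $C^0_{\mathrm{loc}}$ limit $g_0\cdot\widehat{\omega}$ for the lifts, contradicting Definition~\ref{d:strong_winding} directly. You are also more explicit on the key step: the paper asserts that the downstairs inclusion $u(t,r(t)K)\subset N_1(\omega(K))$ immediately yields the upstairs inclusion $\hat{u}(t_n,r(t_n)K)\subset N_1(\widehat{\omega}(K))$, but a priori the lift could jump between sheets. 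Your deck-transformation bookkeeping --- the uniform even-cover radius $\delta_0$ from compactness of $\mathcal{N}$, the local constancy of $g(t,x)$, and the connectedness of $[T,T_{\max})\times K$ --- supplies precisely the justification that the paper leaves implicit, so the proposal is a correct and slightly more careful rendering of the same argument.
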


\begin{proof}
Suppose for contradiction that  $u$ exhibits strongly winding bubbling at the origin at time $T_{max}$, and a unique bubble $\omega$ to which it converges under rescaling by $r(t)$.
and translation by $x(t)$. 

We will show that for every compact  $K \subset \subset \bR^d \backslash \{0\}$, there exists a subsequence 
\[
\{\hat{u}(t_{n_j}, r_{n_j}x)\big|_K\}
\]
which is bounded, contradicting Proposition \ref{r:unboundedwinding}. Fix a compact set  $K \subset \subset \bR^d \backslash \{0\}$ and note that $\widehat{\omega}(K)$ is compact. Define the $\varepsilon$-neighbourhood of a set $A$ by 
\[
N_\varepsilon(A) := \{x\mid \mathrm{dist}(x,A) < \varepsilon\}.
\]
Fix $\varepsilon  = 1$, since $u(t, r(t)- ) \rightarrow \omega$ in $C^0_{\mathrm{loc}}$, there exists $t_0$, such that for all $t > t_0$ we have 
\[
u(t, r(t)K) \subset N_1(\omega(K)).
\]
But we then have that $\{\hat{u}(t_n, r(t_n)K)\}_{\{n\mid t_n > t_0\}} \subset N_1(\widehat{\omega}(K))$, and, in particular, the sequence is bounded, which contradicts Proposition~\ref{r:unboundedwinding}, and concludes the proof.
\end{proof}

\bibliography{Winding-refs}{}
\bibliographystyle{amsbeta}

\end{document}